\newcommand{\R}{{\mathbb R}}
\newcommand{\cF}{{\cal F}}
\newcommand{\RR}{{\mathbb R}}
\newcommand{\HH}{{\mathcal H}}
\newcommand{\EX}{{\mathbb E}}
\newcommand{\EE}{{\mathbb E}}
  \newcommand{\PX}{{\mathbb P}}
\newcommand{\PP}{{\mathbb P}}
\newcommand{\dn}{{ \nabla}}
 \newcommand{\di}{{\rm div\,}}
\newcommand{\s}{\sigma}
\newcommand{\e}{\varepsilon}
\newcommand{\om}{\omega}
\newcommand{\Om}{\Omega}
\newcommand{\D}{\Delta}
\newcommand{\de}{\delta}
\newcommand{\la}{\lambda}
\renewcommand{\k}{\kappa}
\renewcommand{\th}{\theta}
\renewcommand{\cF}{\mathcal F}
\newcommand{\cL}{{\mathcal L}}
\numberwithin{equation}{section}
\newtheorem{theorem}{Theorem}[section]
\newtheorem{defn}[theorem]{Definition}
\newtheorem{lemma}[theorem]{Lemma}
\newtheorem{remark}[theorem]{Remark}
\newtheorem{prop}[theorem]{Proposition}
\begin{document}
\title[ Stochastic 2D hydrodynamical systems]
{Stochastic 2D hydrodynamical type systems: \\ Well posedness
and large deviations}

%%%%%%%%%%%%%%%%%%%
\author[I. Chueshov and A. Millet]
{Igor Chueshov and Annie Millet  }

\address[I.~Chueshov]
{Department of  Mechanics and Mathematics\\
Kharkov National University\\
4~Svobody Square\\
61077, Kharkov, Ukraine} \email[I. Chueshov]{chueshov@univer.kharkov.ua}

\address[A.~Millet]
{ SAMOS-MATISSE, Centre d'\'Economie de la Sorbonne (UMR 8174), Universit\'{e} Paris 1,
 Centre Pierre Mend\`{e}s France,
90 rue de Tolbiac, F- 75634 Paris Cedex 13, France {\it and}
Laboratoire de Probabilit\'es et Mod\`eles Al\'eatoires (UMR 7599),
       Universit\'es Paris~6-Paris~7, Bo\^{\i}te Courrier 188,
          4 place Jussieu, 75252 Paris Cedex 05, France } \email[A.
~Millet]{amillet@univ-paris1.fr {\it and} annie.millet@upmc.fr}

\thanks{This research was partly supported
   by the research project BMF2003-01345 (A. Millet). }

\subjclass[2000]{Primary 60H15, 60F10; Secondary 76D06, 76M35. }

\keywords{Hydrodynamical models,   MHD, B\'{e}nard convection,
shell models of turbulence,
stochastic PDEs, large deviations}

\begin{abstract}
We deal with a class of abstract nonlinear stochastic models, which
covers many 2D hydrodynamical models including 2D Navier-Stokes
equations, 2D MHD  models and 2D magnetic B\'{e}nard problem and also
some shell models of turbulence. We
first prove the existence and uniqueness theorem for the class
considered. Our main result is a Wentzell-Freidlin type large
deviation principle for small multiplicative noise which we prove by
weak convergence method.
\end{abstract}

\maketitle

%%%%%%%%%%%%%%%%%%%

%%%%%%%%%%%%%%%%%%%%%%%%%%%%%%%%%%%%%%%%%%%%%%%%%%%%%%%%
\section{Introduction}\label{s1}
In recent years there has been  a wide-spread interest in the study of
qualitative properties of stochastic models which describe
cooperative effects in fluids by taking into account macroscopic
parameters such as temperature or/and magnetic field. The
corresponding  mathematical models  consists in coupling  the stochastic Navier-Stokes
equations  with some transport or/and Maxwell equations,
which are also stochastically perturbed.
\par
Our goal in this paper is to suggest and develop a unified approach
which makes it possible to cover a wide class of mathematical coupled
models from fluid dynamics. Due to well-known reasons we  mainly
restrict ourselves to spatially two dimensional models. Our unified
approach is based on an abstract stochastic evolution equation in
some Hilbert space of the form
\begin{equation} \label{abstr-1}
\partial_t u  + \mathcal{A} u + \mathcal{B}(u, u) + {\mathcal R}(u)=  \sigma(t,u)\, \dot{W},
\end{equation}
where $\sigma(t,u)\, \dot{W}$ is a multiplicative  noise white in
time with spatial correlation. The hypotheses which we impose on the
linear operator $\mathcal{A}$, the bilinear mapping $\mathcal{B}$
and the operator ${\mathcal R}$
 are true in the case of 2D Navier-Stokes equation (where ${\mathcal R}=0$),
and also for some other classes of two dimensional hydrodynamical
models  such as magneto-hydrodynamic  equations, the Boussinesq
model for the  B\'{e}nard convection and 2D magnetic B\'{e}nard
problem. They also cover the case of  regular higher dimensional
problems such as the 3D Leray $\alpha$-model for the Navier-Stokes
equation  and some shell models
 of turbulence. See a further  discussion in Sect.\ref{s2.1} below.
\par
 For general abstract stochastic
evolution equation in infinite dimensional spaces we  refer to \cite{PZ92}.
However the hypotheses in \cite{PZ92} do not cover our hydrodynamical type model.
We also note the stochastic Navier-Stokes equations
were studied by many authors (see, e.g., \cite{CG94,FG95,MS02,VKF}
and the references therein).
\smallskip
\par
 We first  state the result on existence,   uniqueness and provide a priori estimates
  for a  weak (variational) solution
to the  abstract problem of the form \eqref{abstr-1} where the forcing term
also includes a stochastic
control term with a multiplicative coefficient
(see Theorem~\ref{th3.1}). As a particular case, we deduce well posedness
 when the Brownian motion
$W$ is translated by a random element of its Reproducing Kernel
Hilbert Space (RKHS), as well  a priori bounds of the solution with
constants which only depend on an a.s. bound of the RKHS norm of the
control. In all the concrete hydrodynamical examples described
above, the diffusion coefficient may contain a small multiple of the
gradient of the solution. Thus,  this result contains the
corresponding existence and uniqueness theorems and a priori bounds
for 2D Navier-Stokes equations (see, e.g. \cite{MS02,Sundar}), for
the Boussinesq model of  the  B\'{e}nard convection (see \cite{Ferrario}, \cite{DM}), and
also for the GOY shell model of turbulence (see \cite{BBBF06} and
\cite{MSS}). Theorem~\ref{th3.1} generalizes the existence result
for  MHD equations  given in \cite{BaDP} to the case of
multiplicative noise and also covers  new situations such as the 2D
magnetic B\'{e}nard problem, the 3D Leray $\alpha$-model
 and the Sabra shell model of turbulence.
\par
Our argument mainly follows the local monotonicity idea
suggested in \cite{MS02,Sundar}.
%(see also
% \cite{DM} for 2D Boussinesq model and \cite{MSS} for the GOY model).
However, since we deal with an
abstract hydrodynamical model with a forcing term which contains a stochastic control
 under a  minimal set of hypotheses,
 the argument requires substantial modifications compared to that of
 \cite{Sundar} or \cite{MSS}.
It  relies on a two-step Gronwall lemma (see Lemma \ref{lemGronwall}
below and also \cite{DM}).
\smallskip
\par
Our main result  (see Theorem~\ref{PGDue}) is a Wentzell-Freidlin
type large deviation principle (LDP) for stochastic equations of the
form \eqref{abstr-1} with $\s:=\sqrt{\e}\s$ as $\e \to 0$, which
describes the exponential rate of convergence  of the  solution
$u:=u^\e$ to the deterministic solution  $u^0$. As in the classical
case of finite-dimensional diffusions, the rate function is
described by an energy minimization problem which involves
deterministic controlled equations. The LDP result is that which
would hold true if the solution were a continuous functional of the
noise $W$. Our proof   consists in transferring the LDP satisfied by
the Hilbert-valued Brownian motion $\sqrt{\e} W$ to that of a
Polish-space valued measurable functional of $\sqrt{\e} W$ as
established in \cite{BD00}; see also \cite{BD07}, \cite{DZ} and
\cite{DupEl97}. This is related to the Laplace principle. This
approach has been already  applied in several specific infinite
dimensional situations (see, e.g, \cite{Sundar} for 2D Navier-Stokes
equations, \cite{DM} for 2D B\'{e}nard convection, \cite{BD07} for
stochastic reaction-diffusion system,  \cite{WeiLiu,ReZh} for
stochastic $p$-Laplacian equation and some its generalizations,
 \cite{MSS} for the GOY shell model of turbulence).
 We also refer to  \cite{CR} for large
deviation results for evolution equations in the case on
non-Lipschitz coefficients.
\par
 Our result in Theorem~\ref{PGDue} comprehends a wide class of hydrodynamical systems.
In particular, in addition to the 2D Navier-Stokes equations and
the Boussinesq model mentioned above,
 Theorem~\ref{PGDue} also proves LDP for
2D MHD equations,
2D magnetic B\'{e}nard convection,
3D Leray $\alpha$-model, the Sabra shell model and dyadic model of turbulence.
Note that unlike \cite{Sundar} and \cite{MSS}, in order to give a complete argument
for the weak convergence (Proposition \ref{weakconv}) and the compactness result
(Proposition \ref{compact}),
 we need to prove a time approximation result (Lemma \ref{timeincrement}).
 This  requires to make stronger
assumptions on the diffusion coefficient $\sigma$, which should have some H\"older
time regularity, and  in the explicit hydrodynamical models,  no
longer can include the gradient of the solution (see also \cite{DM}).

\smallskip
\par
Note that the weak convergence approach has been used
 recently to prove LDP for stochastic evolution equations which
satisfy monotonicity and coercivity conditions by J.~Ren and
X.~Zhang \cite{ReZh} and   by  W.~Liu \cite{WeiLiu}. This class
of models does not contain the hydrodynamical
 systems considered in this paper and the main PDE model for this class is
 a reaction-diffusion equation
with a  nonlinear monotone diffusion term perturbed by globally
Lipschitz sub-critical nonlinearity. Let us point out one of the
main differences which explains why we have to impose some more time
regularity assumptions on the diffusion coefficient,  in contrast
with  \cite{ReZh}.  Indeed, unlike the situation considered in
\cite{ReZh} we do not assume the compactness of embeddings in the
corresponding Gelfand triple $V'\subset H\subset V$.
 Thus the elegant method in \cite{ReZh}, which is
based on some compactness property of the family of solutions in
${\mathcal C}([0,T],V')$  obtained by means of the Ascoli theorem,
cannot be applied here, and we  use a technical time
discretization.
Let us also point out that due to the bilinear term which arises in hydrodynamical
models,  the control of any moment of  the $V'$ norm  for  time increments of the
solution is not as good as that in \cite{ReZh}.
 On the other hand,  giving up   the compactness conditions allows
us to cover the important class of hydrodynamical models in {\em
unbounded} domains. The payoff for this is some H\"{o}lder condition in
time for the diffusion coefficient (see condition {\bf (C4)} below).  The
paper by W.~Liu \cite{WeiLiu}  does  not assume the  compactness embeddings
in the Gelfand triple
and  uses another  way to obtain the
compactness of the set of solutions in some set of time continuous
functions.
 This approach  is based on the rather strong
approximation hypothesis which involves some compact embeddings
(see Hypotheses (A4) and (A5) in \cite{WeiLiu}).
The technique introduced by W.~Liu   might be used in our
framework, in order to avoid the extra time regularity  of $\sigma$,
   at the expense of some
compact approximation condition of diffusion term.
 The  corresponding proof is quite involved and we do not adapt it to keep down the size
of the paper; we
 rather focus on the main technical problems raised
by our models,  even in the simple case of a time-independent diffusion coefficient.
  Note that even in the case of monotone and coercive equations,
the diffusion coefficient considered in \cite{ReZh} (resp. \cite{WeiLiu})
 cannot involve the gradient in order
to satisfy condition (H5) (resp.  (A4)).
\smallskip
\par
The paper is organized as follows. In Section \ref{s2} we describe
our mathematical model with details and provide the corresponding
motivations from the theory of (coupled) models of  fluid dynamics.
In this section we also formulate our abstract hypotheses
 and state the results about well posedness and apriori bounds
   of the abstract
stochastic equation which also may contain some random control term.
The proof of these  technical results is given  in the Appendix,
Section  \ref{App}. Note that these  preliminary results are  proved
in a more general framework than what is needed to establish the
large deviation principle. Indeed, we use them in \cite{CM-support}
where we characterize the support of the distribution of the
solution to the stochastic hydrodynamical equations. We formulate
and prove the  large deviations principle by the  weak convergence
approach in Section \ref{s4}. There we use  properties (such as a
priori bounds and localized time increment estimates) of this
stochastic control system as a preliminary step in order to apply
the  general LDP results from \cite{BD00,BD07} in our situation.

%%%%%%%%%%%%%%%%%%%%%%%%%%%%%%%%%%%%%%%%%%%%%%%%%%%%%%
%%%%%%%%%%%%%%%%%%%%%%%%%%%%%%%%%%%%%%%%%%%%%%%%%%%%%%
\section{Description of the model} \label{s2}
Let $(H, |.|)$ denote a separable Hilbert space, $A$ be an
(unbounded) self-adjoint positive linear operator on $H$. Set
$V=Dom(A^{\frac{1}{2}})$. For $v\in V$  set   $\|v\|=
|A^{\frac{1}{2}} v|$. Let $V'$ denote the dual of $V$ (with respect
to the inner product $(.,.)$ of $H$). Thus we have the Gelfand
triple $V\subset H\subset V'$. Let $\langle u,v\rangle $ denote the
duality between $u\in V$ and $v\in V'$ such that  $\langle
u,v\rangle =(u,v)$ for $u\in V$, $v\in H$,
 and let $B : V\times V \to
V'$ be a continuous mapping (satisfying the condition (\textbf{C1})
given below).
\par
The goal of this paper is to study stochastic perturbations of the following
abstract model in $H$
\begin{equation} \label{u0}
\partial_t u(t)  +   A u(t) + B\big(u(t),u(t) \big) + R u(t) =  f,
\end{equation}
where   $R$ is a linear bounded operator in $H$.
We assume that the mapping $B : V\times V \to
V'$  satisfies the following  antisymmetry and bound conditions:
\medskip\par
\noindent \textbf{Condition (C1):}{\it
\begin{itemize}
  \item  $B : V\times V \to V'$ is a  bilinear continuous  mapping.
  \item For $u_i\in V$, $i=1,2,3$,
\begin{equation} \label{as}
 \langle B(u_1, u_2)\, ,\, u_3\rangle  = - \,\langle  B(u_1, u_3)\, ,\, u_2\rangle.
\end{equation}
  \item
 There exists a Banach (interpolation) space
${\mathcal H} $ possessing the properties\\
 (i) $V\subset
{\mathcal H}\subset H;$\\
 (ii) there exists a constant $a_0>0$ such that
\begin{equation} \label{interpol}
\|v\|_\HH^2 \leq a_0 |v|\, \|v\|\quad \mbox{for any $v\in V$};
\end{equation}
(iii)
for every $\eta >0$ there exists $C_\eta >0$
 such that
\begin{align} \label{boundB}
| \langle B(u_1, u_2)\,, \, u_3\rangle | &\leq \eta\,  \|u_3\|^2 + C_\eta \, \|u_1\|_\HH^2 \,
 \|u_2\|_\HH^2, \quad for \;  u_i\in V, \; i=1,2,3.
\end{align}
\end{itemize}
}

\begin{remark}\label{re:1}
{\rm {\bf (1)}
The relation in \eqref{boundB} obviously implies that
\begin{align} \label{boundB-eq1}
| \langle B(u_1, u_2)\,, \, u_3\rangle | &\leq C_1  \|u_3\|^2 + C_2 \, \|u_1\|_\HH^2 \,
 \|u_2\|_\HH^2, \quad \mbox{\rm for }\; u_i\in V,\; i=1,2,3,
\end{align}
for some positive constants $C_1$ and $C_2$. On the other hand,
if we put in \eqref{boundB-eq1} $\eta C_1^{-1} u_3$ instead of $u_3$,
then we recover \eqref{boundB} with $C_\eta=C_1C_2 \eta^{-1}$
Thus the requirements \eqref{boundB} and\eqref{boundB-eq1} are equivalent.
If for $u_3\neq 0$ we put now $\eta =\|u_1\|_\HH\|u_2\|_\HH \|u_3\|^{-1}$ in
\eqref{boundB} with $C_\eta=C_1C_2 \eta^{-1}$, then using \eqref{as}
we obtain that for some constant $C>0$,
\begin{equation}\label{preB}
| \langle B(u_1, u_2)\,, \, u_3\rangle | \leq C  \, \|u_1\|_\HH \, \|u_2\| \,
 \|u_3\|_\HH, \quad \mbox{\rm for } \;  u_i\in V,\; i=1,2,3.
\end{equation}
%for some $C>0$.
It is also evident that \eqref{preB} and \eqref{as} imply \eqref{boundB}.
Thus the conditions in \eqref{boundB}, \eqref{boundB-eq1} and
\eqref{preB} are equivalent to each other.
\par
{\bf (2)}  To lighten notations for $u_1\in V$, set $B(u_1):=B(u_1,u_1)$;
relations \eqref{as}, \eqref{interpol} and %\eqref{boundB}
\eqref{preB} yield  for every $\eta >0$ the existence of $C_\eta >0$
such that for $u_1, u_2\in V$,
\begin{equation} \label{boundB1}
| \langle B(u_1)\, , \, u_2\rangle | \leq \eta\,  \|u_1\|^2 + C_\eta \, |u_1|^2 \,
 \|u_2\|_\HH^4.
\end{equation}
Relations \eqref{as} and   \eqref{boundB1}   yield
\begin{equation}
|\langle B(u_1)-B(u_2)\, ,\, u_1-u_2\rangle| = |\langle B(u_1-u_2), u_2\rangle | \leq \eta \|u_1-u_2\|^2 +
C_\eta\,  |u_1 - u_2|^2\, \|u_2\|_\HH^4. \label{diffB1}
\end{equation}
}
\end{remark}
\medskip\par
\subsection{Motivation}
\label{s2.1}
The main motivation for the condition ({\bf C1}) is that it  covers a wide class
of 2D hydrodynamical models including the following ones.
An element of $\RR^2$ is denoted $u=(u^1,u^2)$.

\subsubsection{2D Navier-Stokes equation}
Let  $D $ be a bounded, open and simply connected domain  of $\RR^2$.
We consider the Navier-Stokes equation with
the Dirichlet (no-slip) boundary conditions:
\begin{equation} \label{1.1.0}
\partial_t u - \nu \Delta u + u\nabla u + \nabla p =f , \quad \mbox{\rm div}\, u=0
~~\mbox{ in }~~D,\qquad u=0\quad\mbox{on}\quad \partial D,
\end{equation}
where $u= (u^1(x,t), u^2(x,t))$ is the velocity of a fluid,   $p(x,t)$ is the pressure,
 $\nu$ the kinematic viscosity and $f(x,t)$ is an external  density of force per volume.
Let   $n$ denote
the outward normal to $\partial D$
and   let
\[
H_{(1)} = \{ f\in \left[L^2(D)\right]^2 : \di  f=0 \;  \mbox{\rm  in }\;  D \;
 \mbox{\rm  and } \; f\, .\, n=0 \; \mbox{\rm  on }\;  \partial D \}
\]
be endowed with the usual $L^2$ scalar product.
Here above we set
  $\di f= \sum_{i=1,2} \partial_i f_i$,
Projecting  on the space $H_{(1)}$ of divergence free vector fields,
problem \eqref{1.1.0}
 can be written in the form \eqref{u0} (with $R\equiv 0$) in the
space  $H_{(1)}$ (see e.g. \cite{Temam}), where
 $A$ is the Stokes operator defined by the  bilinear form
  \begin{equation}\label{stokes-f}
   a(u_1,u_2)=\nu \sum\limits_{j=1}^{2}
   \int_{D} \nabla u_1^j \cdot \nabla u_2^j \, dx,\quad
  \end{equation}
with $u_1, u_2\in
   V=V_1\equiv\left[H^1_0(D)\right]^2\cap H_{(1)}$.
The map  $B\equiv B_1 : V_1\times V_1 \to V_1'$ is defined by
 \begin{equation}\label{oper-B1}
 \langle B_1(u_1,u_2)\, ,\, u_3\rangle = \int_D  [u_1(x)  \dn u_2(x) ]\, u_3(x) dx
\equiv\sum_{i,j=1}^2 \int_D  u^j_1\: \partial_ju^i_2 \: u^i_3\,  dx,
 \quad u_i\in V_1.
 \end{equation}
Using integration by parts, Schwarz's and Young's inequality, one
checks that this map  $B_1$  satisfies the conditions of ({\bf C1})
with $\HH = \left[L^4(D)\right]^2\cap H_{(1)}$. The inequality in
\eqref{interpol} is the well-known Ladyzhenskaya inequality (see
e.g. \cite{Constantin} or \cite{Temam}).
\par
We can also include in \eqref{1.1.0} a Coriolis type force by changing
$f$ into $f-Ru$, where $R(u^1,u^2)= c_0(-u^2,u^1)$, for some constant $c_0$.
In this case we get \eqref{u0} with $R\neq 0$.
\par The case of unbounded domains $D$ (including $D=\RR^2$) can be also
considered in our abstract framework. For this we only need  to
shift the spectrum away from zero  by changing $A$ into $A+Id$ and
introducing $R=-Id$.

\subsubsection{2D magneto-hydrodynamic  equations}

We consider magneto-hydrodynamic (MHD)  equations
   for a viscous incompressible  resistive  fluid  in a 2D domain $D$,
which have the form
(see, e.g., \cite{morreau-mhd}):
  \begin{equation}\label{1.1u}
   \partial_tu-\nu_1\Delta u+
   u \nabla u= -\nabla \left(p+\frac{s}2 |b|^2\right)
   +s b \nabla b+
   f,
\end{equation}
\begin{equation}\label{1.1b}
   \partial_tb-\nu_2\Delta b+
   u \nabla b=
   b\nabla u+
   g,
\end{equation}
\begin{equation}\label{1.2}
   \di u=0, \quad  \di b =0 \quad
\end{equation}
where
$u=(u^1(x,t),u^2(x,t))$  and $b=(b^1(x,t),b^2(x,t))$
denote  velocity and magnetic fields,
$p(x,t)$ is a scalar pressure.
We consider the following
boundary conditions
\begin{equation}\label{1.1bc}
u=0, \quad   b\, .\, n=0, \quad \partial_1 b^2- \partial_2 b^1=0
\quad {\rm on}~~ \partial D
\end{equation}
In equations above  $\nu_1$ is the kinematic viscosity,
$\nu_2$ is the magnetic diffusivity (which is determined from
magnetic  permeability and conductivity of the fluid),
the positive parameter $s$ is defined by the relation  $s=Ha^2\nu_1\nu_2$,
where $Ha$ is the so-called Hartman number.
The given   functions  $f=f(x,t)$ and
 $g=g(x,t)$ represent  external volume
forces  and  the curl of external current applied to the fluid.
We refer to \cite{LadSol-mhd60}, \cite{DL-mhd72} and \cite{SeTe}
for the mathematical theory for the MHD equations.
\par
 Again, the above equations are a particular case of equation \eqref{u0}
for the following  spaces and operators which satisfy ({\bf C1}).
To see this we first note that without loss of generality we can assume that $s=1$
in \eqref{1.1u} (indeed, if $s\neq 1$ we can introduce a  new magnetic field
$b:=\sqrt{s}b$ and rescale the curl of the current $g:=\sqrt{s}g$).
 For the  velocity part of the  MHD equations,  we use the same spaces
 $H_{(1)}$ and  $V_1$ and the Stokes operator generated by the bilinear form defined by
\eqref{stokes-f}  with $\nu=\nu_1$. Now we denote this operator by $A_1$.
\par
As for the magnetic part we set  $H_{(2)}=H_{(1)}$ and
$V_2= \left[H^1(D)\right]^2\cap H_{(2)}$
and define another Stokes operator $A_2$ as an  unbounded operator in  $H_{(2)}$
generated by the bilinear form \eqref{stokes-f}  with $\nu=\nu_2$ when  considered on the space $V_2$.
\par
As in the previous case we can write \eqref{1.1u}--\eqref{1.1bc} in the form
\eqref{u0} in the space $H=H_{(1)}\times H_{(2)}$ with $A=A_1\times A_2$, $R\equiv 0$.
We also set $V=V_1\times V_2$ and define
 $B : V\times V \to V'$ by the relation
 \[
\langle B(z_1,z_2), z_3\rangle=\langle B_1(u_1,u_2), u_3\rangle
-\langle B_1(b_1,b_2), u_3\rangle+\langle B_1(u_1,b_2), b_3\rangle-
\langle B_1(b_1,u_2), b_3\rangle
\]
for $z_i=(u_i,b_i)\in V=V_1\times V_2$, where $B_1$ is given by
\eqref{oper-B1}. The conditions in ({\bf C1}) are satisfied with
$\HH=\big( \left[L^4(D)\right]^2\times \left[L^4(D)\right]^2 \big)
\cap H$.

\subsubsection{2D
Boussinesq model for the  B\'{e}nard convection.}

The next example is the following coupled system of Navier-Stokes and heat equations
from the  B\'{e}nard convection problem
(see e.g. \cite{Foias} and the references therein).
Let $D =(0, l) \times (0, 1)$ be a rectangular   domain  in the
vertical  plane,  $(e_1, e_2)$ the standard basis in
 $\R^2$  and $x=(x^1,x^2)$ an element of $\RR^2$.
 Denote by  $p(x,t)$ the pressure field,   $f, g$  external forces,
$ u=(u^1(x,t),u^2(x,t))$ the velocity field and  $\th=\th(x,t)$
the temperature field  satisfying the following system
\begin{eqnarray}
\partial_t u + u \nabla u-\nu \D u + \nabla p &=&  \th e_2  + f,
\quad \di u   = 0, \label{1.1}\\
\partial_t \th +u \nabla \th -u^2 -\k\D \th &=&
g,\label{eqn3}
\end{eqnarray}
with boundary conditions
\begin{eqnarray*}
u =0\;\;  \& \;\; \th=0 \;\; \mbox{on}\;\;x^2=0\; \mbox{and}\;x^2=1,   \\
u, p, \th, u_{x^1}, \th_{x^1} \; \mbox{are periodic in}\; x^1 \;
\mbox{with period}\; l.\footnotemark
\end{eqnarray*}
\footnotetext{Here and below this means that $\phi |_{x^1=0}=\phi |_{x^1=l}$
for the corresponding function.}

Here above  $\nu$ is the kinematic viscosity,
$\kappa$ is the thermal  diffusion coefficient.
Let
\begin{align*}
H_{(3)} =  & \left\{u\in \left[L^2(D)\right]^2,\; \di u=0, \;
u^2 |_{x^2=0}=u^2 |_{x^2=1}=0,\; u^1 |_{x^1=0}=u^1 |_{x^1=l}
  \right\}    \\
\end{align*}
and $H_{(4)}=    L^2(D)$.
We also denote
 \begin{align*}
V_3 =  & \left\{u\in H_{(3)}\cap \left[H^1(D)\right]^2,\;  u |_{x^2=0}=u |_{x^2=1}=0,\;
 u \; \mbox{is $l$-periodic in}\; x^1  \right\},    \\
V_4=  & \left\{\th \in H^1(D),\;  \; \th|_{x^2=0}=\th|_{x^2=1}=0,\;
\th \; \mbox{is $l$-periodic in}\; x^1
\right\}.
\end{align*}
Let $A_3$ be the Stokes operator in $H_{(3)}$
generated by the bilinear form \eqref{stokes-f}  considered on $V_3$
and $A_4$ be the operator in $H_{(4)}$
generated by the Dirichlet form
  \begin{equation*}%\label{Dir-f}
   a(\th_1,\th_2)=\kappa
   \int_{D} \nabla \theta_1 \cdot \nabla \th_2 \, dx,\quad
\th_1, \th_2\in  V_4.
  \end{equation*}
Again, the above equations are a particular case of equation \eqref{u0}
for the following spaces and operators which satisfy ({\bf C1}).
Let $H=H_{(3)}\times H_{(4)}$ and  $V=V_3\times V_4$. We
set $A(u,\theta)=(A_3 u \, ,\,  A_4 \th)$,
 $R(u,\theta)= - (\theta e_2\, ,\, u^2)$, and define the mapping
 $B : V\times V \to V'$ by the relation
 \[
\langle B(z_1,z_2), z_3\rangle=\langle B_1(u_1,u_2), u_3\rangle
+ \sum_{i=1,2} \int_D u_1^i \, \partial_i \,  \theta_2\;  \theta_3 \, dx
\]
for $z_i=(u_i,\th_i)\in V=V_3\times V_4$, where $B_1$ is given by
\eqref{oper-B1}. With these notations, the Boussinesq equations for
$(u,\theta)$ are a particular case of \eqref{u0} with condition
({\bf C1}) for
 $\HH=\big( \left[L^4(D)\right]^2\times L^4(D) \big)
\cap H$.

\subsubsection{2D magnetic B\'{e}nard problem.}
This is  the  Boussinesq model coupled with magnetic field (see \cite{GaPa}).
As above let $D =(0, l) \times (0, 1)$ be a rectangular   domain  in the
vertical plane,  $(e_1, e_2)$ the standard basis in
 $\R^2$. We consider the equations
\begin{eqnarray*}
\partial_t u + u \dn u-\nu_1 \D u + \nabla \left(p+\frac{s}2 |b|^2\right)
   -s b \nabla b &=&  \th e_2  + f,
\quad \di u   = 0, %\label{1.1mbp}
\\
\partial_t \th +u \dn \th -u^2 -\k\D \th &=&  \; f, %\label{eqn3mbp}
\\
  \partial_tb-\nu_2\Delta b+
   u \nabla b -    b\nabla u &=&
   h, \quad \di b   = 0, %\label{eqn3mag}
\end{eqnarray*}
with boundary conditions
\begin{eqnarray*}
u =0\;\;  \& \;\; \th=0 \;\; \& \;\; b^2=0,\; \partial_2 b^1=0 \;\;  \mbox{on}\;\;x^2=0\; \mbox{and}\;x^2=1,   \\
u, p, \th, b, u_{x^1}, \th_{x^1}, b_{x^1} \; \mbox{are periodic in}\; x^1 \;
\mbox{with period}\; l.
\end{eqnarray*}
As for the MHD case we can assume that $s=1$. In this case we have \eqref{u0} for the
variable $z=(u,\th,b)$ with $H=H_{(3)}\times H_{(4)}\times H_{(5)}$,
where $H_{(3)}$ and $H_{(4)}$ are the same as in the previous example
and $H_{(5)}= H_{(3)}$. We also set
  $V=V_3\times V_4\times V_5$, where $V_3$ and $V_4$ are the same as above
  and $V_5= H_{(3)}\cap \left[H^1(D)\right]^2$. The operator $A$ is generated by the bilinear  form
\[
a(z_1,z_2)=\nu_1 \sum\limits_{j=1}^{2}
   \int_{D} \nabla u_1^j \cdot \nabla u_2^j \, dx+
\kappa
   \int_{D} \nabla \th_1 \cdot \nabla \th_2 \, dx
+\nu_2 \sum\limits_{j=1}^{2}
   \int_{D} \nabla b_1^j \cdot \nabla b_2^j \, dx\quad
   \]
for $z_i=(u_i,\th_i,b_i)\in V$. The bilinear operator $B$ is defined by
\begin{eqnarray*}
\langle B(z_1,z_2), z_3\rangle & = &\langle B_1(u_1,u_2), u_3\rangle
-\langle B_1(b_1,b_2), u_3\rangle
\\ & &
+\, \langle B_1(u_1,b_2), b_3\rangle-
\langle B_1(b_1,u_2), b_3\rangle
+ \sum_{i=1,2} \int_D u_1^i \, \partial_i \,  \theta_2\;  \theta_3 \, dx
\end{eqnarray*}
for $z_i=(u_i,\th_i,b_i)\in V$, where $B_1$ is given by
\eqref{oper-B1}. We also set $R(u,\theta, b)= - (\theta e_2\, ,\,
u^2, 0)$. It is easy to check that this model is an example of
equation \eqref{u0} with ({\bf C1}), where  $\HH=\big(
\left[L^4(D)\right]^2\times L^4(D)\times \left[L^4(D)\right]^2 \big)
\cap H$.

\subsubsection{3D Leray $\alpha$-model for Navier-Stokes equations}
The theory developed in this paper can be also applied to some 3D models.
As an example we consider 3D Leray $\alpha$-model
(see \cite{Ler34}; for recent development of this model we refer to \cite{titi2,titi1}  and
to the references therein).
In a bounded 3D domain $D$ we consider the following
equations:
\begin{align}
& \partial_t u - \nu \Delta u + v\nabla u + \nabla p =f ,\label{1.1.0-3da}
\\
& (1-\alpha \Delta)v=u,\quad \mbox{\rm div}\, u=0,\quad \mbox{\rm div}\, v=0
~~\mbox{ in }~~D, \label{1.1.0-3db} \\
&v=u=0\quad\mbox{on}\quad \partial D. \label{1.1.0-3dc}
\end{align}
where $u= (u^1, u^2, u^3)$ and $v= (v^1, v^2, v^3)$  are unknown fields,
 $p(x,t)$ is the pressure.
 In the space
\[
H = \{ u\in \left[L^2(D)\right]^3 : \di  u=0 \;  \mbox{\rm  in }\;  D \;
 \mbox{\rm  and } \; u\, .\, n=0 \; \mbox{\rm  on }\;  \partial D \}
\]
problem  \eqref{1.1.0-3da}--\eqref{1.1.0-3dc} can be written in the form
\[
u_t+Au+B(G_\alpha u, u)=\tilde f,
\]
where $A$ is the corresponding 3D Stokes operator  (defined as in
the  2D case by  the form $a(u_1,u_2)=\nu \sum_{j=1}^3 \int_D \nabla u_1^j\,
\nabla u_2^j\, dx$ on $V\equiv H\cap \left[H^1_0(D)\right]^3$),
$G_\alpha=\left(Id+\alpha\nu^{-1}A\right)^{-1}$ is the Green operator
and
 \begin{equation*}
 \langle B(u_1,u_2)\, ,\, u_3\rangle = \sum_{i,j=1}^3 \int_D  u^j_1\; \partial_ju^i_2\;  u^i_3\:  dx,
 \quad u_i\in V= H\cap \left[H^1_0(D)\right]^3.
 \end{equation*}
 Note that the embedding  $H^{1/2}(D)\subset L^3(D)$ implies that
the inequality \eqref{interpol} holds true for ${\mathcal H} =
\left[L^3(D)\right]^3 \cap H$. Furthermore, H\"older's inequality
and the embedding $H^1(D)\subset L^6(D)$ imply that for $u_1, u_2,
u_3\in V$,
\begin{eqnarray*}
| \langle B(G_\alpha u_1,u_2)\, ,\, u_3\rangle| &
\le & C\|u_2\| \, |G_\alpha u_1|_{L^6(D)} \, | u_3|_{L^3(D)} % \\
%& \le &
\le  C\|u_2\| \; \|G_\alpha u_1\|\;  | u_3|_{L^3(D)}  \\
& \le &
  C\|u_2\| \; | u_1|_{L^3(D)}\;  | u_3|_{L^3(D)},
\end{eqnarray*}
where the last inequality comes from the fact that $A^{\frac{1}{2}} G_\alpha$ is a bounded operator on
$H$, so that $\|G_\alpha u_1\|=  |A^{\frac{1}{2}} G_\alpha u_1| \leq C |u_1|\leq C |u_1|_{L^3(D)}$.
By Remark~\ref{re:1}(1)
this implies condition ({\bf C1}) for $B_\alpha( u_1,u_2):=B(G_\alpha u_1,u_2)$.

\subsubsection{Shell models of turbulence}
Let $H$ be a set of all sequences $u=(u_1, u_2,\ldots)$ of complex numbers
such that $\sum_n |u_n|^2<\infty$. We consider $H$ as a \emph{real} Hilbert space
endowed  with the inner product $(\cdot,\cdot)$ and the norm $|\cdot|$ of the form
\[
(u,v)={\rm Re}\,\sum_{n=1}^\infty u_n v_n^*,\quad
|u|^2 =\sum_{n=1}^\infty |u_n|^2,
\]
where $v_n^*$ denotes the complex conjugate of $v_n$. In this space $H$ we
consider the evolution equation \eqref{u0} with $R=0$ and with linear operator $A$
and bilinear mapping $B$ defined by the formulas
\[
(Au)_n =\nu k_n^2 u_u,\quad n=1,2,\ldots,\qquad
Dom(A)=\left\{ u\in H\, :\; \sum_{n=1}^\infty k_n^4 |u_n|^2<\infty\right\},
\]
where $\nu>0$, $k_n=k_0\mu^n$ with $k_0>0$ and $\mu>1$, and
\[
\left[B(u,v)\right]_n=-i\left( a k_{n+1} u_{n+1}^* v_{n+2}^*
+b k_{n} u_{n-1}^* v_{n+1}^* -a k_{n-1} u_{n-1}^* v_{n-2}^*
-b k_{n-1} u_{n-2}^* v_{n-1}^*
\right)
\]
for $n=1,2,\ldots$, where $a$ and $b$ are real numbers (here above we also assume that
$ u_{-1}= u_{0}=v_{-1}= v_{0}=0$). This choice  of $A$ and $B$ corresponds to the
so-called GOY-model (see, e.g., \cite{OY89}).
If we take
\[
\left[B(u,v)\right]_n=-i\left( a k_{n+1} u_{n+1}^* v_{n+2}
+b k_{n} u_{n-1}^* v_{n+1} +a k_{n-1} u_{n-1} v_{n-2}
+b k_{n-1} u_{n-2} v_{n-1}
\right),
\]
then we obtain the Sabra shell model introduced in \cite{LPPPV98}. In
 both cases the equation \eqref{u0} is an infinite sequence of ODEs.
\par
One can easily  show (see \cite{BBBF06} for the GOY model and \cite{CLT06}
for the Sabra model) that
the trilinear form
\[
\langle B(u,v), w\rangle\equiv {\rm Re}\, \sum_{n=1}^\infty [B(u,v)]_n\,  w_n^*
\]
possesses the property \eqref{as} and also satisfies the inequality
\[
\left|\langle B(u,v), w\rangle\right|\le C |u||A^{1/2} v| |w|,\quad
\forall  u,w\in H, \quad \forall v\in Dom(A^{1/2}).
\]
Thus by Remark~\ref{re:1}(1)
the condition ({\bf C1}) holds with $\HH= Dom(A^{s})$ for any choice
of $s\in [0,1/4]$.
\medskip\par
We can also consider the so-called dyadic model (see, e.g., \cite{KP05}
and the references therein) which can be written as an infinite system
of real ODEs of the form
\begin{equation}\label{dyadic-m}
\partial_t u_n+\nu\lambda^{2\alpha n} u_n-\lambda^{n} u^2_{n-1}+
\lambda^{n+1} u_n u_{n+1}=f_n, \quad n=1,2,\ldots,
\end{equation}
where  $\nu, \alpha>0$, $\lambda>1$, $u_0=0$.
Simple calculations show that under the condition $\alpha\ge 1/2$
the system \eqref{dyadic-m} can be written as \eqref{u0}
and that  condition ({\bf C1}) holds for
$[B(u,v)]_n=-\lambda^n u_{n-1} v_{n-1}+ \lambda^{n+1}\, u_n\, v_{n+1}$ and
$(Au)_n=\nu\, \lambda^{2\alpha n}\, u_n$.

\subsection{Stochastic model}
 We will consider a
stochastic   external random force   $f$
of the equation in \eqref{u0} driven by a Wiener process $W$
and whose intensity may depend on the solution $u$. More precisely,
let $Q$ be a linear
positive  operator in the Hilbert space $H$ which belongs to the trace class,
and hence is  compact. Let $H_0 = Q^{\frac12} H$. Then $H_0$ is a
Hilbert space with the scalar product
$$
(\phi, \psi)_0 = (Q^{-\frac12}\phi, Q^{-\frac12}\psi),\; \forall
\phi, \psi \in H_0,
$$
together with the induced norm $|\cdot|_0=\sqrt{(\cdot,
\cdot)_0}$. The embedding $i: H_0 \to  H$ is Hilbert-Schmidt and
hence compact, and moreover, $i \; i^* =Q$.
Let $L_Q\equiv L_Q(H_0,H) $ be the space of linear operators $S:H_0\mapsto H$ such that
$SQ^{\frac12}$ is a Hilbert-Schmidt operator  from $H$ to $H$. The norm in the space $L_Q$ is
  defined by  $|S|_{L_Q}^2 =tr (SQS^*)$,  where $S^*$ is the adjoint operator of
$S$. The $L_Q$-norm can be also written in the form
\begin{equation}\label{LQ-norm}
 |S|_{L_Q}^2=tr ([SQ^{1/2}][SQ^{1/2}]^*)=\sum_{k=1}^\infty |SQ^{1/2}\psi_k|^2=
 \sum_{k=1}^\infty |[SQ^{1/2}]^*\psi_k|^2
\end{equation}
for any orthonormal basis  $\{\psi_k\}$ in $H$.
\par
Let   $W(t)$ be a   Wiener process  defined   on a filtered
probability space $(\Om, \cF, \cF_t, \PX)$, taking values in $H$
and with covariance operator $Q$. This means that $W$ is Gaussian, has independent
time increments and that for $s,t\geq 0$, $f,g\in H$,
\[
\EE  (W(s),f)=0\quad\mbox{and}\quad
\EE  (W(s),f) (W(t),g) = \big(s\wedge t)\, (Qf,g).
\]
We also have the representation
\begin{equation}\label{W-n}
W(t)=\lim_{n\to\infty} W_n(t)\;\mbox{ in }\; L^2(\Om; H)\; \mbox{ with }
W_n(t)=\sum_{j=1}^n q^{1/2}_j \beta_j(t) e_j,
\end{equation}
where  $\beta_j$ are  standard (scalar) mutually independent Wiener processes,
$\{ e_j\}$ is an  orthonormal basis in $H$ consisting of eigen-elements of $Q$, with
$Qe_j=q_je_j$. For details concerning this Wiener process
we refer  to \cite{PZ92}, for instance.
\par
The noise intensity $\s: [0, T]\times V \to L_Q(H_0, H)$ of the stochastic perturbation
which we put in \eqref{u0} is
assumed to satisfy the following growth and Lipschitz conditions:
\medskip
\par
\noindent \textbf{Condition (C2):} {\it
$\s \in C\big([0, T] \times V; L_Q(H_0, H)\big)$,
 and there exist non negative  constants $K_i$ and $L_i$
such that for every $t\in [0,T]$ and $u,v\in V$:\\
{\bf (i)}   $|\s(t,u)|^2_{L_Q} \leq K_0+ K_1 |u|^2+ K_2 \|u\|^2$, \\
{\bf (ii)}   $|\s(t,u)-\s(t,v)|^2_{L_Q}
\leq L_1 |u-v|^2 + L_2 \|u-v\|^2$.
}
\smallskip

\begin{remark}\label{re:as-s}
 {\rm
Assume that
$\s \in C\big([0, T] \times Dom(A^s); L_Q(H_0, H)\big)$ for some $s<1/2$
is such that
\[
|\s(t,u)|^2_{L_Q} \leq K'_0+ K'_1 |A^su|^2,\quad
|\s(t,u)-\s(t,v)|^2_{L_Q}
\leq L' |A^s(u-v)|^2
\]
for every $t\in [0,T]$ and $u,v\in Dom(A^s)$
with some  positive constants $K'_0$, $K'_1$ and $L'$.
By interpolation we have that for some constant $c_0>0$ and any $\eta>0$ and $ u\in V$:
\begin{equation}\label{interpol-ineq}
|A^su|^2 \le  c_0 |A^{1/2}u|^{4s}|u|^{2-4s} %\\
\le
\eta |A^{1/2}u|^2 + C_\eta|u|^2 . %,\quad \forall\, \eta>0,\; \forall u\in V.\nonumber
\end{equation}
Therefore in this case
 the conditions in ({\bf C2}) are valid with
 positive constants $K_2$ and $L_2$ which  can be taken arbitrary  small.
 This observation  is important because in Theorem~\ref{th3.1} below we impose
 some restrictions on the values of the parameters $K_2$ and $L_2$.
}
\end{remark}

Recall that for $u\in V$, $B(u)=B(u,u)$. Consider the following stochastic  equation
\begin{equation} \label{u}
d u(t)  + \big[  A u(t) + B\big(u(t) \big) + R u(t) \big]\, dt
=  \sigma(t,u(t))\, dW(t).
\end{equation}
For technical reasons, in order to prove a large deviation principle for the
law the solution to \eqref{u}, we will need some precise estimates
on the solution of the equation  deduced from \eqref{u} by  shifting
 the Brownian $W$ by some
random element  (see e.g. \cite{Sundar} and \cite{DM}).
 This cannot be deduced from similar ones  on $u$ by means
of a Girsanov transformation since the Girsanov density is not
uniformly bounded when the intensity of the noise tends to zero
(see \cite{DM}). Thus we also need to consider the corresponding shifted problem.
\par
To describe a set of admissible random shifts  we introduce the class
 $\mathcal{A}$ as the  set of $H_0-$valued
$(\cF_t)-$predictable stochastic processes $h$ such that
$\int_0^T |h(s)|^2_0 ds < \infty, \; $ a.s.
Let
\[S_M=\Big\{h \in L^2(0, T; H_0): \int_0^T |h(s)|^2_0 ds \leq M\Big\}.\]
The set $S_M$ endowed with the following weak topology is a
  Polish space (complete separable metric space)
\cite{BD07}:
$ d_1(h, k)=\sum_{i=1}^{\infty} \frac1{2^i} \big|\int_0^T \big(h(s)-k(s),
\tilde{e}_i(s)\big)_0 ds \big|,$
where $
\{\tilde{e}_i(s)\}_{i=1}^{\infty}$ is an  orthonormal basis
for $L^2(0, T; H_0)$.
Define
\begin{equation} \label{AM}
 \mathcal{A}_M=\{h\in \mathcal{A}: h(\om) \in
 S_M, \; a.s.\}.
\end{equation}
\medskip

In order to define the stochastic control equation, we introduce another  intensity
coefficient $\tilde{\s}$ and also nonlinear feedback forcing $\tilde R$
(instead of $R$)
which satisfy
\medskip\par
\noindent \textbf{Condition (C3):} {\bf (i)}   {\it ${}\;{\tilde \s}
\in C\big([0, T] \times V; L(H_0, H)\big)$ and there exist constants
 $\tilde{K}_{\mathcal H}$,   $\tilde{K}_i$,  and $\tilde{L}_j$, for
$i=0,1$ and $j=1,2$ such that
\begin{align} |\tilde{\s}(t,u)|^2_{L(H_0,H)} \leq \tilde{K}_0 + \tilde{K}_1 |u|^2 +
\tilde{K}_\HH \|u\|_{\mathcal H}^2, & \quad \forall t\in [0,T], \;
 \forall u\in V, \label{tilde-s-b}\\
 |\tilde{\s}(t,u) -\tilde{\s}(t,v)  |^2_{L(H_0,H)} \leq \tilde{L}_1 |u-v|^2
+ \tilde{L}_2 \|u-v\|^2, & \quad  \forall t\in [0,T], \; \forall u,v\in V,
\label{tilde-s-lip}
\end{align}
where $|\cdot |_{L(H_0,H)}$ denotes the (operator) norm
in the space $L(H_0,H)$ of all bounded linear operators from $H_0$ into $H$.
}
\par\noindent
{\bf (ii)}
{\it
 $\tilde R : [0, T] \times H\mapsto H$ is a continuous mapping such that
\[
|\tilde R(t,0)|\le R_0,\quad |\tilde R(t,u) -\tilde R(t,v)|\le R_1 |u-v|,\quad
\forall u,v\in H,\; \forall t\in [0,T],
\]
for non-negative  constants $R_0$ and $R_1$.}
\begin{remark}\label{re:s-tilde-s}
{\rm In contrast with Condition ({\bf C2}) our hypotheses concerning the
control intensity coefficient $\tilde{\s}$ involve a weaker topology (we deal with
the operator norm $|\cdot |_{L(H_0,H)}$ instead of the trace class norm
$|\cdot |_{L_Q}$). However we require in \eqref{tilde-s-b} a stronger
bound (in the intermediate space $\HH$). One can see that any noise intensity $\s$
satisfies  Condition ({\bf C3})({\bf i}) provided Condition ({\bf C2})
holds with $K_2=0$.
% However the class of intensities satisfying both
%Condition ({\bf C2}) and  Condition ({\bf C3})({\bf i}) is wider.
}
\end{remark}
Let  $M >0$,  $h\in {\mathcal A}_M$ and  $\xi\in H$.
Under Conditions   ({\bf C2}) and ({\bf C3}) we consider the nonlinear
SPDE with initial condition
$u_h(0)=\xi$:
\begin{equation}  \label{uh}
d u_h(t)  + \big[  A u_h(t) + B\big(u_h(t) \big) + \tilde R (t, u_h(t)) \big]\, dt
 =  \sigma(t,u_h(t))\, dW(t) + \tilde{\s}(t, u_h(t)) h(t)\, dt.
\end{equation}
Fix $T>0$ and let $ X: = C\big([0, T]; H\big) \cap L^2\big(0, T;V\big) $
denote the Banach space with the norm defined by
\begin{equation} \label{norm}
 \|u\|_X = \Big\{\sup_{0\leq s\leq T}|u(s)|^2+ \int_0^T \|
u(s)\|^2 ds\Big\}^\frac12 .
\end{equation}
Recall that an  $(\cF_t)$-predictable stochastic process $u_h(t,\om)$ is called a
{\em  weak
solution } in $X$  for the stochastic equation \eqref{uh}  on $[0, T]$
 with initial condition $\xi$ if
$u\in X= C([0, T]; H) \cap L^2((0, T); V)$, a.s., and
satisfies
\begin{eqnarray*}
 (u_h(t), v)-(\xi, v) + \int_0^t[ \langle u_h(s), Av\rangle
 + \big\langle  B(u_h(s))\, ,\,  v\big\rangle + (\tilde R(s,u_h(s)),v)]ds \nonumber \\
 =   \int_0^t \big(\s(s,u_h(s)) dW(s),v \big) +
  \int_0^t \big( \tilde{\s}(s,u_h(s)) h(s) \, ,\, v\big)\, ds,\;\; {\rm a.s.},
\end{eqnarray*}
for all $v \in Dom(A)$ and all $t \in [0,T]$.
Note that this solution is a strong one in the probabilistic meaning, that is written
 in terms of stochastic integrals with respect to  the given Brownian motion $W$.
\par
The following assertion shows that   equation \eqref{u}, as well
as \eqref{uh}, has a unique solution in $X$, and the $X$-norm of the solution $u_h$ to \eqref{uh}
 satisfies a priori bounds  depending on $M$ when $h\in {\mathcal A}_M$.

\begin{theorem}\label{th3.1}
Assume that  Conditions {\bf (C1)}--{\bf (C2)} are satisfied, and that  either condition
(i) or else (ii) below hold true:
\par
(i) $\tilde{\sigma}=\sigma$ and $\tilde{R}$ satisfies condition {\bf (C3)(ii)};
\par
(ii) Condition {\bf (C3)} is satisfied.
\par
\noindent Then for every
 $M>0$ and $T>0$  there exists $\bar{K}_2:=K_2(T, M)>0$,
 (which also depends on $K_i$, $\tilde{K}_i$ and $R_i$,  $i=0,1$, and on
$\tilde{K}_\HH$)
 such that under  conditions  $\EX |\xi|^4 < \infty$,
 $h\in   \mathcal{A}_M$, $K_2\in [0, \bar {K}_2[$ and $L_2<2$
there exists a   weak solution $u_h$  in $X$ of the
equation  \eqref{uh} with
initial data  $u_h(0)=\xi \in H$.
 Furthermore, for this solution there exists a constant
 $C:=C(K_i, L_i,\tilde{K}_i  , \tilde{K}_{\mathcal H},
\tilde{L}_i, R_i, T,M)$
 such that
for $h\in {\mathcal A}_M$,
\begin{equation} \label{eq3.1}
 \EX\Big( \sup_{0\leq t\leq T}
 |u_h(t)|^4
+ \int_0^T \|u_h(t)\|^2\, dt+\int_0^T \|u_h(t)\|_\HH^4\, dt \Big) \leq C\, \big( 1+E|\xi|^4\big).
\end{equation}
 If the constant $L_2$ is small enough, the equation  \eqref{uh}  has a unique solution in $X$.
If one only requires $L_2<2$, then equation  \eqref{uh} has again
a unique solution in $X$ if   either
$\tilde{\sigma}=\sigma$  %which satisfy both conditions ({\bf C2}) and ({\bf C3})
% (cf. Remark~\ref{re:s-tilde-s}),
 or if  the function $h$ possesses a deterministic bound,
i.e., there exists a (deterministic) scalar function $\psi(t)\in L^2(0,T)$
such that $|h(t)|_0\le \psi(t)$ a.s.
 \end{theorem}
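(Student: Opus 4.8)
\emph{Proof strategy.} The plan is to follow the local monotonicity method of \cite{MS02,Sundar}, so that no compactness of the embedding $V\subset H$ is needed, combining a Galerkin scheme with uniform a priori bounds, a Minty--Browder identification of the nonlinear weak limits, and the two-step Gronwall Lemma~\ref{lemGronwall}. First I construct Galerkin approximations $u_n$ by projecting \eqref{uh} onto the span $H_n$ of the first $n$ eigen-elements of $A$. Since $B$ is bilinear, the resulting finite-dimensional It\^o system has locally Lipschitz coefficients, hence a unique local solution. To show it is global and to obtain \eqref{eq3.1}, I apply It\^o's formula to $|u_n(t)|^2$: the antisymmetry \eqref{as} kills the term $\langle B(u_n),u_n\rangle$, leaving the dissipation $2\|u_n\|^2$, the It\^o correction $|\s(t,u_n)|^2_{L_Q}$, and the drift and control contributions. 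Using ({\bf C2})(i), ({\bf C3}) and the interpolation \eqref{interpol} in the form $\|u_n\|_\HH^2\le a_0|u_n|\,\|u_n\|$, every term proportional to $\|u_n\|^2$ (those carrying $K_2$ and, in case (ii), $\tilde K_\HH$) is absorbed into the dissipation by Young's inequality, provided $K_2$ stays below a threshold $\bar K_2=\bar K_2(T,M)$ and $L_2<2$; this is precisely the role of those restrictions. Applying It\^o to $|u_n|^4$ together with the Burkholder--Davis--Gundy inequality then yields $\EE\sup_{t}|u_n(t)|^4+\EE\int_0^T\|u_n(t)\|^2\,dt\le C(1+\EE|\xi|^4)$ uniformly in $n$, and the $\|u_n\|_\HH^4$ bound follows once more from \eqref{interpol}.

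The uniform bounds give, along a subsequence, $u_n\rightharpoonup u$ weakly in $L^2(\Om\times[0,T];V)$ and weak-$\ast$ in $L^4(\Om;L^\infty(0,T;H))$, with weak limits $B(u_n)\rightharpoonup\mathcal B$, $\s(\cdot,u_n)\rightharpoonup\mathcal S$ and $\tilde\s(\cdot,u_n)h\rightharpoonup\mathcal T$. The crux, and the hardest point since no compact embedding is available, is to identify $\mathcal B=B(u)$, $\mathcal S=\s(\cdot,u)$ and $\mathcal T=\tilde\s(\cdot,u)h$. I do this by a Minty--Browder monotonicity argument: for an arbitrary adapted test process $v\in L^2(\Om\times[0,T];V)\cap L^4(\Om;L^\infty(0,T;H))$ set $\zeta(t)=\exp\big(-\int_0^t r(s)\,ds\big)$ with $r(s)=C_\eta\|v(s)\|_\HH^4+2R_1+L_1$, so that \eqref{diffB1}, ({\bf C2})(ii) and ({\bf C3})(ii) yield a local monotonicity estimate in which the coefficient of $\|u-v\|^2$ equals $2-\eta-L_2>0$. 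Applying It\^o to $\zeta(t)|u_n(t)|^2$, using weak lower semicontinuity of the $V$-norm term when passing to the limit, and then choosing $v=u-\lambda\phi$ and letting $\lambda\downarrow0$ for arbitrary adapted $\phi$, I recover the three identifications, so that $u$ solves \eqref{uh}. The two-step Gronwall Lemma~\ref{lemGronwall} is exactly what makes this exponential-weight manipulation rigorous, since $\|v\|_\HH^4$ is only integrable in time; finally \eqref{eq3.1} passes to $u$ by weak lower semicontinuity.

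For uniqueness let $u_h,\bar u_h$ be two solutions, $w=u_h-\bar u_h$, and apply It\^o to $\zeta(t)|w(t)|^2$ with $\zeta(t)=\exp\big(-\int_0^t[L_1+C_\eta\|\bar u_h(s)\|_\HH^4]\,ds\big)$. By \eqref{diffB1} the bilinear difference $\langle B(u_h)-B(\bar u_h),w\rangle$ is bounded by $\eta\|w\|^2+C_\eta\|\bar u_h\|_\HH^4|w|^2$, and the It\^o correction by $L_1|w|^2+L_2\|w\|^2$ through ({\bf C2})(ii); the $|w|^2$ terms cancel against $d\zeta$ and the surviving $(2-\eta-L_2)\|w\|^2$ has a nonnegative coefficient exactly when $L_2<2$ (with $\eta$ small). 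Hence $\EE[\zeta(t)|w(t)|^2]\le0$, and since $\zeta>0$ and $w(0)=0$ this forces $w\equiv0$, Lemma~\ref{lemGronwall} again justifying the passage to expectations and the control of the martingale term while the weight $\|\bar u_h\|_\HH^4$ is only a.s.\ integrable. The part of the control term $2(w,[\tilde\s(u_h)-\tilde\s(\bar u_h)]h)$ proportional to $\|w\|^2$ competes for the same dissipation budget; absorbing it costs a factor that blows up as the absorption parameter tends to $0$ and multiplies the random, merely integrable, factor $|h|_0^2$. When $L_2$ is small this causes no trouble, but when $L_2<2$ is only barely met one needs either $\tilde\s=\s$, so that the control is estimated by the same $L_Q$-bound as the noise, or a deterministic bound $|h(t)|_0\le\psi(t)$ making that coefficient a fixed $L^2(0,T)$ function; in either case Lemma~\ref{lemGronwall} applies and yields uniqueness.
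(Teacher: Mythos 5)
Your overall architecture matches the paper's proof quite closely: a Galerkin scheme with locally Lipschitz finite-dimensional coefficients, It\^o estimates for $|u_n|^2$ and $|u_n|^{4}$ closed by the two-step Gronwall Lemma~\ref{lemGronwall} (which is indeed where the smallness restriction $K_2<\bar K_2$ enters), weak limits in $L^2(\Om_T,V)$, $L^4(\Om_T,\HH)$ and weak-$\ast$ in $L^4(\Om;L^\infty(0,T;H))$, followed by a Minty--Browder identification with an exponential weight and the perturbation $v=u_h-\lambda\tilde v$, $\lambda\to0$. Three inaccuracies in that part, though. First, your weight $r(s)=C_\eta\|v(s)\|_\HH^4+2R_1+L_1$ omits the control terms $2\sqrt{\tilde L_1}|h(s)|_0+\tfrac{\tilde L_2}{\eta}|h(s)|_0^2$ which the paper's \eqref{r} needs in order to absorb $\big([\tilde\s(u)-\tilde\s(v)]h,\,u-v\big)$; without them the local monotonicity inequality does not close in case (ii). Second, Lemma~\ref{lemGronwall} is not what justifies the exponential-weight manipulation in the identification step (the paper justifies the It\^o formula for $e^{-r(t)}|u(t)|^2$ by the integrability facts \eqref{r-facts}); the lemma's actual work is in the Galerkin a priori bounds and in uniqueness. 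Third, you skip the paper's Step~4 entirely: the limit procedure only yields $u_h$ \emph{weakly} continuous in $H$, and membership in $X=C([0,T];H)\cap L^2(0,T;V)$ requires the separate argument with $G_\de=Id-e^{-\de A}$ and dominated convergence showing $\EX\sup_{t\le T}|u_h(t)-e^{-\de A}u_h(t)|^2\to0$.

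The genuine gap is the uniqueness regime $\tilde\s=\s$, $L_2<2$, with general random $h\in\mathcal A_M$. You claim that since the control "is estimated by the same $L_Q$-bound as the noise", Lemma~\ref{lemGronwall} applies; it does not. Absorbing the cross term $2\sqrt{L_2}\,|h(s)|_0\,\|w\|\,|w|$ costs $\eta\|w\|^2+\eta^{-1}L_2|h(s)|_0^2|w|^2$, so the random factor $|h|_0^2$ sits in the Gronwall coefficient $\varphi$; since $\varphi$ is random one cannot apply the classical Gronwall lemma to expectations, and the pathwise-then-expectation route forces control of $\EX\sup_t|\Phi(t)|$ by Burkholder--Davis--Gundy, which in the lemma's notation produces $\de=9L_2/\beta$. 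The constraints $2\beta e^{C}\le1$ and $2\de e^{C}\le\alpha=2-3\eta-L_2$ then combine into $L_2\big(1+36e^{2C}\big)\le 2-3\eta$ with $C=C(T,M)$ --- i.e.\ the argument only closes for \emph{small} $L_2$ (this is exactly the paper's first uniqueness regime), and it is in no way rescued by taking $\tilde\s=\s$. (With a deterministic bound $|h(t)|_0\le\psi(t)$ one instead takes expectations directly, the martingale drops out, and the classical Gronwall lemma applies with the deterministic coefficient $1+\psi^2$; that case you have right.) The paper's device for $\tilde\s=\s$, $L_2<2$ is entirely different: a Girsanov change of measure with density $\exp\big(-\int_0^t h\,dW-\tfrac12\int_0^t|h(s)|_0^2ds\big)$ (valid since $\int_0^T|h|_0^2\le M$ a.s.), under which $\tilde W^h=W+\int_0^\cdot h\,ds$ is a $Q$-Brownian motion and \eqref{uh} becomes the \emph{uncontrolled} equation, whose uniqueness is already known; well-posedness is then transferred back under $\PP$, and \eqref{eq3.1} is recovered by rerunning the $p=2$ estimates. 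Without this step (or a substitute) your proof does not cover one of the three uniqueness regimes asserted in Theorem~\ref{th3.1}.
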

%\begin{remark}\label{re:K-calH} {\bf ????}
%
%{\rm
%As we will see below there exists an absolute constant $c_0>0$ such that
%under the condition $\tilde K_\HH\le c_0$ the constant $\bar{K}_2$
%in the statement of Theorem~\ref{th3.1} does not depend on $\tilde K_\HH$.
%}
%\end{remark}

The proof is similar to that given in \cite{DM}
for 2D Boussinesq model \eqref{1.1} and \eqref{eqn3}
(see also \cite{MS02,Sundar} for the case of 2D Navier-Stokes equations \eqref{1.1.0}).
 However, since we deal with an
abstract hydrodynamical model  under a  minimal set of hypotheses,
 the argument requires substantial modifications. For the sake of completeness
we give a detailed proof  in an   Appendix (see Section \ref{App}). Note that
only  the case $\tilde{\sigma}=\sigma$ will be used for the large deviations results.
However, the  general case is needed in some other frameworks,
such as the support characterization of the distribution
to equation \eqref{u} in \cite{CM-support}.

%\begin{remark}\label{re2:h-determ}
%{\rm
%If the control (shift) function $h$ admits a deterministic bound
%and  $\s$ % and $\tilde \s$
%satisfies conditions in Remark~\ref{re:as-s}, then
%Theorem~\ref{th3.1} holds without any restrictions on the bounds
% of the constants  used
% in upper estimates of $\s$ and $\tilde \s$ (see Remark~\ref{re:h-determ}).
%}
%\end{remark}

\section{Large deviations}  \label{s4}
  We consider large deviations using  a    weak convergence approach
  \cite{BD00, BD07}, based on variational representations of
  infinite dimensional    Wiener processes.
Let $\e>0$ and let $u^\e$ denote the solution to the following equation
\begin{equation} \label{ue}
du^\e(t) + [A u^\e(t) +B(u^\e(t)) +
\tilde R (t,u^\e(t))]\, dt =  \sqrt{\e}\,\sigma(t,u^\e(t))\, dW(t) \, , u^\e(0)=\xi\in H.
\end{equation}
%In argument below
%we assume  that $\tilde R$ and $\s$ are independent of t.
%How to to obtain the result for time dependent case see Remark~\ref{re:time-dep}
%below.
\par
Theorem \ref{th3.1} shows that for a any choice of $K_2$ and $L_2$, for $\e$ small enough
the solution of \eqref{ue} exists and is unique in $X:=C([0,T],H)\cap L^2([0,T],V)$; it is denoted
 by $u^\e = {\mathcal G}^\e(\sqrt{\e} W)$
 for a  Borel measurable function ${\mathcal G}^\e: C([0, T], H) \to X$.
 A detailed proof of the  existence
of a such function ${\mathcal G}^\e$ is given in \cite{RoScZh}.
\par
Let $\mathcal{B}(X)$ denote the  Borel $\s-$field of the Polish space $X$ endowed with the metric
 associated with the norm defined by \eqref{norm}. We recall some classical   definitions;
by convention the infimum over an empty set is  $ +\infty$.
\begin{defn}
   The random family
$(u^\e )$ is said to satisfy a large deviation principle on
$X$  with the good rate function $I$ if the following conditions hold:\\
\indent \textbf{$I$ is a good rate function.} The function function $I: X \to [0, \infty]$ is
such that for each $M\in [0,\infty[$ the level set $\{\phi \in X: I(\phi) \leq M
\}$ is a    compact subset of $X$. \\
 For $A\in \mathcal{B}(X)$, set $I(A)=\inf_{u \in A} I(u)$.\\
\indent  \textbf{Large deviation upper bound.} For each closed subset
$F$ of $X$:
$$
\lim\sup_{\e\to 0}\; \e \log \PX(u^\e \in F) \leq -I(F).
$$
\indent  \textbf{Large deviation lower bound.} For each open subset $G$
of $X$:
$$
\lim\inf_{\e\to 0}\; \e \log \PX(u^\e \in G) \geq -I(G).
$$
\end{defn}

For all $h \in L^2([0, T], H_0)$, let $u_h$ be the solution of
the corresponding control equation (\ref{dcontrol}) with initial condition
$u_h(0)=\xi$:
\begin{eqnarray} \label{dcontrol}
d u_h(t) + [Au_h(t) +B(u_h(t))+\tilde R(t,u_h(t))]dt =\s(t,u_h(t)) h(t) dt .
\end{eqnarray}
Let ${\mathcal C}_0=\{ \int_0^. h(s)ds \, :\, h\in L^2([0,T], H_0)\}  \subset C([0, T], H_0)$.
Define ${\mathcal G}^0:   C([0, T], H_0)  \to X$ by
$ {\mathcal G}^0(g)=u_h $ for $  g=\int_0^. h(s)ds \in {\mathcal C}_0$
and ${\mathcal G}^0(g)=0$ otherwise.
Since  the argument below requires some information about the difference of the
solution at two different times, we need an additional assumption about the
regularity of the map $\sigma(.,u)$.
\smallskip
\par
\noindent{\bf Condition (C4)} ({\it Time H\"older regularity of $\sigma$}):
There exist  constants $\gamma>0$ and $C\geq 0$ such that for $t_1, t_2\in [0,T]$
and $u\in V$:
\[ |\sigma(t_1,u)-\sigma(t_2,u)|_{L_Q} \leq C \, \left( 1+ \| u\|\right) |t_1-t_2|^\gamma.\]
%\smallskip
%\par
The following theorem is the main result of this section.
\begin{theorem}\label{PGDue}
Suppose  the conditions ({\bf C1}) and ({\bf C2}) with $K_2=L_2=0$ are satisfied.
Suppose furthermore that the conditions ({\bf C3 (ii)}) and ({\bf C4}) hold.
Then the solution  $(u^\e)$ to \eqref{ue} satisfies the large deviation principle in
$X=C([0, T]; H) \cap L^2((0, T); V)$,  with the good rate function
\begin{eqnarray} \label{ratefc}
 I_\xi (u)= \inf_{\{h \in L^2(0, T; H_0): \; u ={\mathcal G}^0(\int_0^. h(s)ds) \}}
 \Big\{\frac12 \int_0^T |h(s)|_0^2\,  ds \Big\}.
\end{eqnarray}
\end{theorem}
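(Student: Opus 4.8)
The plan is to apply the Budhiraja--Dupuis weak convergence criterion \cite{BD00,BD07}, which reduces the Laplace principle (equivalent to the LDP with a good rate function) for $u^\e=\mathcal{G}^\e(\sqrt{\e}W)$ to two statements about the maps $\mathcal{G}^\e$ and the skeleton $\mathcal{G}^0$. Since the controlled solutions $u_h$ of \eqref{dcontrol} are exactly $\mathcal{G}^0(\int_0^\cdot h\,ds)$, and the variational representation of $\sqrt{\e}W$ produces precisely the energy $\tfrac12\int_0^T|h|_0^2\,ds$, the candidate rate function is forced to be \eqref{ratefc}. It therefore suffices to verify: a \emph{compactness} statement (Proposition \ref{compact}), that for each $M$ the level set $\{\mathcal{G}^0(\int_0^\cdot h\,ds):h\in S_M\}$ is compact in $X$; and a \emph{weak convergence} statement (Proposition \ref{weakconv}), that if $h^\e,h\in\mathcal{A}_M$ with $h^\e\to h$ in distribution on $S_M$ (weak topology), then $\mathcal{G}^\e(\sqrt{\e}W+\int_0^\cdot h^\e\,ds)\to\mathcal{G}^0(\int_0^\cdot h\,ds)$ in distribution in $X$.

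For the compactness step I would take $h_n\in S_M$ with $h_n\rightharpoonup h$ weakly in $L^2(0,T;H_0)$ (possible since the ball $S_M$ is weakly compact) and show $u_{h_n}\to u_h$ strongly in $X$. The uniform a priori bounds come from the deterministic specialization of Theorem \ref{th3.1}, giving $u_{h_n}$ bounded in $C([0,T];H)\cap L^2(0,T;V)$ together with $\int_0^T\|u_{h_n}\|_\HH^4\,dt$ bounded. I would then form the difference $u_{h_n}-u_h$, test the controlled equation \eqref{dcontrol} against it, and use the antisymmetry \eqref{as} together with the local bound \eqref{diffB1} to absorb the bilinear term; the product $\s(\cdot,u_{h_n})h_n-\s(\cdot,u_h)h$ is split into $\s(\cdot,u_{h_n})(h_n-h)$, handled by weak convergence of $h_n$ against the factor made precompact in time by the time-increment estimate (Lemma \ref{timeincrement}), and $(\s(\cdot,u_{h_n})-\s(\cdot,u_h))h$, handled by the Lipschitz bound of $\s$ (with $L_2=0$). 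The two-step Gronwall lemma (Lemma \ref{lemGronwall}), with the integrable weight $\|u_h\|_\HH^4$, then closes the estimate and yields strong convergence.

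For the weak convergence step I would use a Girsanov argument to identify $u^\e:=\mathcal{G}^\e(\sqrt{\e}W+\int_0^\cdot h^\e\,ds)$ as the solution of the stochastic control equation \eqref{uh} with noise intensity $\sqrt\e\,\s$, control $h^\e$, and $\tilde\s=\s$. Theorem \ref{th3.1} provides bounds on $u^\e$ uniform in $\e$ and in $h^\e\in\mathcal{A}_M$. Passing to a.s. convergent realizations of $h^\e$ (via the Skorokhod representation), I would prove pathwise convergence $u^\e\to u_h$ in $X$. The martingale part $\sqrt\e\int_0^\cdot\s(s,u^\e)\,dW$ tends to $0$ in $L^2$ by the growth bound (C2) and $K_2=0$; the remaining terms are then treated exactly as in the compactness step, via \eqref{diffB1} and the two-step Gronwall lemma, once the control drift has been passed to the limit.

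The main obstacle, in both steps, is the passage to the limit in the control drift $\int_0^t\s(s,u^\e)h^\e(s)\,ds$ when $h^\e$ converges only weakly and the Gelfand triple $V\subset H\subset V'$ carries \emph{no} compact embedding (the price of allowing unbounded domains). Weak convergence of $h^\e$ cannot be combined with mere weak convergence of $\s(\cdot,u^\e)$; one needs the second factor to be precompact in a topology strong enough in time. This is exactly what the localized time-increment estimate (Lemma \ref{timeincrement}) provides, and it is precisely this estimate that forces the H\"older-in-time regularity hypothesis (C4). The bilinear term, which enjoys no smoothing, must be controlled purely through the structural inequalities of (C1)---the antisymmetry \eqref{as} and the interpolation bound \eqref{preB}, whence \eqref{diffB1}---so that the error satisfies a Gronwall inequality with an $L^1$-in-time coefficient, to which Lemma \ref{lemGronwall} applies.
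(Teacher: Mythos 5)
Your skeleton coincides with the paper's: the Budhiraja--Dupuis criterion reduces the Laplace principle (hence the LDP with good rate function \eqref{ratefc}) to Propositions \ref{compact} and \ref{weakconv}; these rest on the a priori bounds of Theorem \ref{th3.1} applied directly to the controlled equations (the paper, like you, does not transfer estimates by Girsanov, precisely because the density degenerates as $\e\to 0$; Girsanov serves only to identify $u_{h_\e}={\mathcal G}^\e(\sqrt{\e}W^\e)$), on the Skorokhod representation, and on the time-increment Lemma \ref{timeincrement} substituting for the absent compact embedding in the Gelfand triple, with ({\bf C4}) entering exactly where you say.

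There is, however, one step that fails as you state it. In both propositions, after discretizing time with the step function \eqref{step-f} and controlling the errors via Lemma \ref{timeincrement}, ({\bf C4}) and the Lipschitz property of $\s$, the surviving main term is $\sum_{k}\bigl(\s(t_k,u_h(t_k))\int_{t_{k-1}}^{t_k}(h_n(s)-h(s))\,ds\,,\,U_n(t_k)\bigr)$. Here $\int_{t_{k-1}}^{t_k}(h_n-h)\,ds\to 0$ only \emph{weakly} in $H_0$, while the test vector $U_n(t_k)$ still depends on $n$ and is merely bounded in $H$; a weakly null sequence paired against a bounded non-convergent sequence need not vanish, so ``weak convergence of $h_n$ against a factor made precompact in time'' does not close the limit --- time-precompactness kills only the discretization errors, not this final pairing. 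The decisive extra ingredient in the paper is that $\s(t_k,u_h(t_k))$ is a \emph{compact} operator from $H_0$ to $H$, because $\s$ takes values in $L_Q$ (so $\s Q^{1/2}$ is Hilbert--Schmidt): compactness upgrades the weak null convergence to strong convergence in $H$ (the terms $\bar T_5$ and $I^5_{n,N}$), after which boundedness of $U_n(t_k)$, plus dominated convergence in $\omega$ in the stochastic case, finishes. Two smaller corrections: Lemma \ref{lemGronwall} is used in the paper only in the proof of Theorem \ref{th3.1}; in Propositions \ref{weakconv} and \ref{compact} the classical Gronwall lemma suffices once one localizes on the sets $G_{N,\e}(T)$, where \eqref{interpol} bounds $\int_0^T\|u_h(s)\|_\HH^4\,ds$ by a constant times $N^2$ --- localization is unavoidable, since $\exp\bigl(C\int_0^T\|u_h(s)\|_\HH^4\,ds\bigr)$ admits no moment bounds, and it is then removed via Markov's inequality and the uniform estimate \eqref{eq3.1} (Step 1 of Proposition \ref{weakconv}); your appeal to an ``integrable weight'' should be replaced by this explicit localization.
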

%To establish the large deviation principle, we need to strengthen the hypothesis on the
%growth condition and Lipschitz property of $\sigma$ by requiring that $K_2=L_2=0$.
%This stronger assumption is needed to
We at first prove   the following
technical lemma, which studies time increments of the solution to
a stochastic control problem  extending  both \eqref{ue} and \eqref{dcontrol}.
When  $\s$, $\tilde{\s}$ and $\tilde R$  satisfy ({\bf C2}) and  ({\bf C3}),
  $h\in   {\mathcal A}_M$,  the stochastic control problem is defined  as in \eqref{uh}:
    $ u_h^\e(0)=\xi$ and
\begin{equation} \label{uhe}
du_h^\e(t) + [A  u_h^\e(t) + B(u_h^\e(t)) + \tilde R (t,u^\e_h(t))]\, dt =
\sqrt{\e}\,  \sigma(t,u^\e_h(t))\, dW(t) +
\tilde{\sigma}(t,u_h^\e(t))\, h(t)\, dt.
\end{equation}
To state the lemma mentioned above,  we need the following notations.
For every integer $n$, let $\psi_n : [0,T]\to [0,T]$ denote a measurable map
such that for every $s\in [0,T]$,
$s\leq \psi_n(s) \leq \big(s+c2^{-n})\wedge T$ for some positive constant $c$.
 Given $N>0$,  $h\in {\mathcal A}_M$,
 and for  $t\in [0,T]$,  let
\[ G_N(t)=\Big\{ \omega \, :\, \Big (\sup_{0\leq s\leq t}  |u_h^\e(s)(\omega)|^2 \Big)\vee
\Big(  \int_0^t \|u_h^\e(s)(\omega)\|^2 ds \Big) \leq N\Big\}.\]
As in  Proposition~\ref{Galerkin},
 we can use a relaxed form of condition  ({\bf C3 (i)})    in order to perform
calculations in the following lemma; this more general setting is again used in \cite{CM-support}.
\begin{lemma} \label{timeincrement}
Let $\e_0, M,N>0$, $\sigma$
satisfy condition ({\bf C2})
and $\tilde{\sigma}$ satisfy  \eqref{tilde-s-lip} and the following condition
\eqref{tilde-s-bv}
\begin{align} |\tilde{\s}(t,u)|^2_{L(H_0,H)} \leq \tilde{K}_0 + \tilde{K}_1 |u|^2 +
\tilde{K}_2 \|u\|^2, & \quad \forall t\in [0,T], \;
 \forall u\in V, \label{tilde-s-bv}
\end{align}
instead of \eqref{tilde-s-b}.
 Assume that  $\xi\in L^4(\Om;H)$ and let $u_h(t)$
be solution  of \eqref{uhe}  satisfying the conclusion of Theorem~\ref{th3.1}.
 Then there exists a positive  constant
$C$ (depending on
$K_i,  \tilde{K}_i,i=0,1,2,  L_j, \tilde{L}_j, j=1,2, R_1, T, M, N, \e_0$)
 such that for
 any  $h\in {\mathcal A}_M$, $\e\in [0, \e_0]$,
\begin{equation}  \label{time}
I_n(h,\e):=\EX\Big[ 1_{G_N(T)}\; \int_0^T  |u_h^\e(s)-u_h^\e(\psi_n(s))|^2 \, ds\Big]
\leq C\, 2^{-\frac{n}{2}}.
\end{equation}
\end{lemma}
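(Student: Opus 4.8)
The plan is to estimate the time increment by splitting the difference $u_h^\e(s)-u_h^\e(\psi_n(s))$ according to the four types of terms in the evolution equation \eqref{uhe}, namely the linear term $A$, the bilinear term $B$, the feedback $\tilde R$, and the two stochastic/control forcing terms. Writing $u_h^\e(\psi_n(s))-u_h^\e(s) = \int_s^{\psi_n(s)} [\,\cdots\,] dr + \int_s^{\psi_n(s)} \sqrt\e\,\sigma\, dW + \int_s^{\psi_n(s)} \tilde\sigma h\, dr$, I would first work at the level of the $V'$-norm (or the $Dom(A)$-dual pairing), since the drift terms $Au_h^\e$ and $B(u_h^\e)$ only live in $V'$. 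The key quantity to control is $\EX\big[ 1_{G_N(T)} \int_0^T |u_h^\e(s)-u_h^\e(\psi_n(s))|^2\, ds\big]$, and the factor $1_{G_N(T)}$ is what makes the estimate tractable: on $G_N(T)$ we have the uniform bounds $\sup_s |u_h^\e(s)|^2\le N$ and $\int_0^T\|u_h^\e(s)\|^2 ds\le N$, so all the rough a priori control is already in hand.

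The main steps, in order, are as follows. First I would handle the deterministic drift integrals. For the linear term, $\big|\int_s^{\psi_n(s)}\langle A u_h^\e(r), v\rangle\, dr\big|$ is bounded using $|\psi_n(s)-s|\le c2^{-n}$ together with $\int\|u_h^\e\|^2\le N$ via Cauchy--Schwarz, which produces a factor $2^{-n}$ after integrating in $s$. The bilinear term is the delicate one: using \eqref{preB} one gets $|\langle B(u_h^\e)(r), v\rangle|\le C\|u_h^\e(r)\|_\HH\,\|u_h^\e(r)\|\,\|v\|_\HH$, and on $G_N(T)$ together with the a priori bound $\int_0^T\|u_h^\e\|_\HH^4\, dt$ from \eqref{eq3.1} this is integrable; Cauchy--Schwarz in $r$ over the short interval of length $c2^{-n}$ again gives the decay. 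The feedback $\tilde R$ is Lipschitz of linear growth in $H$, so it is the easiest. Second, for the control forcing $\tilde\sigma h$, I would use \eqref{tilde-s-bv} and the Cauchy--Schwarz inequality together with $\int_0^T|h(s)|_0^2\, ds\le M$, noting that the short time interval contributes $\big(\int_s^{\psi_n(s)}|h(r)|_0^2 dr\big)^{1/2}\big(c2^{-n}\big)^{1/2}$, giving again a power of $2^{-n}$.

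The genuinely new contribution is the stochastic integral $\sqrt\e\int_s^{\psi_n(s)}\sigma(r,u_h^\e(r))\, dW(r)$, and this is where the exponent $2^{-n/2}$ rather than $2^{-n}$ arises. Here I would take the $H$-norm squared, use the It\^o isometry, and bound $\EX\int_s^{\psi_n(s)}|\sigma(r,u_h^\e(r))|^2_{L_Q}\, dr$ via condition ({\bf C2})(i), which gives $K_0+K_1|u_h^\e|^2+K_2\|u_h^\e\|^2$. The contribution of $K_0+K_1|u_h^\e|^2$ is controlled by $N$ times the interval length $c2^{-n}$, while $K_2\|u_h^\e\|^2$ integrates against $\int_0^T\|u_h^\e\|^2\le N$. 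The subtlety is that one must be slightly careful whether to keep $1_{G_N(T)}$ inside before or after applying the isometry (since $G_N(T)$ is not $\cF_s$-measurable), which I would resolve by using $1_{G_N(T)}\le 1_{G_N(\psi_n(s))}$ and the fact that $G_N$ is adapted. I expect the stochastic term to be the main obstacle, not because any single estimate is hard, but because obtaining the square-root rate $2^{-n/2}$ (as opposed to $2^{-n}$) requires tracking the correct power of the interval length through the It\^o isometry and then summing the bounds from all five terms, keeping the dominant $2^{-n/2}$ term; the drift and control terms all decay faster, at rate $2^{-n}$, so the final bound is governed by the Brownian increment.
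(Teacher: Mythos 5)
Your plan has a genuine gap, and it is exactly the obstruction the paper is organized around. You propose to write $u_h^\e(\psi_n(s))-u_h^\e(s)$ as a sum of five integrals and estimate each one, retreating to the $V'$-norm ``since the drift terms $Au_h^\e$ and $B(u_h^\e)$ only live in $V'$.'' But the lemma asserts a bound on the \emph{$H$-norm} of the increments, and this is what is actually used later (e.g.\ in \eqref{eqT1}); an estimate of $\int_0^T\|u_h^\e(s)-u_h^\e(\psi_n(s))\|_{V'}^2\,ds$ is a strictly weaker statement, and your proposal gives no bridge back. The natural bridge, interpolation $|v|^2\le \|v\|_{V'}\,\|v\|$, fails here: it requires controlling $\int_0^T\|u_h^\e(\psi_n(s))\|^2\,ds$, i.e.\ pointwise-in-time $V$-norms at the grid points, which is not available on $G_N(T)$ (the a priori bounds control $\sup_t|u_h^\e(t)|^2$ and $\int_0^T\|u_h^\e\|^2\,dt$, not $\sup_t\|u_h^\e(t)\|$; indeed $\EX\|u_h^\e(t_k)\|^2$ need not even be finite). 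Similarly, if you try to stay in $H$ by pairing the drift against the increment itself, you face $\langle Au_h^\e(r),u_h^\e(\psi_n(s))-u_h^\e(s)\rangle\le \|u_h^\e(r)\|\big(\|u_h^\e(\psi_n(s))\|+\|u_h^\e(s)\|\big)$ and the same uncontrolled endpoint $V$-norm. The paper's introduction explicitly flags that, because of the bilinear term, $V'$-increment estimates are not good enough here. What the paper does instead is apply It\^o's formula directly to $|u_h^\e(\psi_n(s))-u_h^\e(s)|^2$, so that $A$ and $B$ are paired against $u_h^\e(r)-u_h^\e(s)$ with the \emph{running} solution: coercivity gives $\langle Au_h^\e(r),u_h^\e(r)-u_h^\e(s)\rangle \ge \|u_h^\e(r)\|^2-\|u_h^\e(r)\|\,\|u_h^\e(s)\|$, whose cross term is $O(2^{-n})$ after Fubini, and the antisymmetry \eqref{as} kills $\langle B(u_h^\e(r)),u_h^\e(r)\rangle$, reducing the bilinear term via \eqref{boundB1} and \eqref{interpol} to $\int_0^T\|u_h^\e\|_\HH^4\,ds\le a_0^2N^2$ on $G_N(T)$. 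In this formulation every dangerous $V$-norm sits inside a $dr$- or $ds$-integral, which is precisely what $G_N(T)$ controls.

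Two secondary points. First, your adaptedness fix for the stochastic term does not work as stated: $1_{G_N(\psi_n(s))}$ is $\cF_{\psi_n(s)}$-measurable, not $\cF_s$-measurable, so it cannot be pulled through the It\^o isometry over $[s,\psi_n(s)]$; and the $\cF_s$-measurable $1_{G_N(s)}$ does not bound $|u_h^\e(r)|$ for $r>s$. One needs a genuine stopping-time localization (or, as in the paper, the Burkholder--Davis--Gundy inequality with the indicator $1_{G_N(r)}$ inserted inside the quadratic variation). Second, your diagnosis of where $2^{-n/2}$ comes from is off: in your decomposition the isometry would give the stochastic term at rate $2^{-n}$ (were the drift issue not fatal). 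In the paper the half power arises from BDG applied to the martingale term $\int_s^{\psi_n(s)}\big(u_h^\e(r)-u_h^\e(s),\sigma\,dW(r)\big)$ of the It\^o expansion --- a term whose expectation is nonzero only because of the non-adapted indicator $1_{G_N(T)}$, and for which one can only bound $|u_h^\e(r)-u_h^\e(s)|$ by $C\sqrt N$ before taking the square root of $\int_s^{\psi_n(s)}|\s(u_h^\e(r))|^2_{L_Q}\,dr$; Schwarz in $s$ then yields $C2^{-n/2}$, dominating the $O(2^{-n})$ contributions of all other terms.
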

\begin{proof} The proof is  close to that of Lemma 4.2 in \cite{DM}.
However we deal with a class of more general functions $\psi_n(s)$ and do not
assume that $K_2=L_2= \tilde{K}_2=\tilde{L}_2=0$.
As above, to lighten the notation we skip
the time dependence of $\sigma$,  $\tilde{\s}$  and $\tilde{R}$.
Let $h\in {\mathcal A}_M$, $\e\geq 0$;
for any $s\in [0,T]$, It\^o's formula yields
\[
 |u_h^\e(\psi_n(s))-u_h^\e(s)|^2 =2\int_s^{\psi_n(s)} (u_h^\e(r)-u_h^\e(s), d u_h^\e(r))
+\e \int_s^{\psi_n(s)}|\s (u_h^\e(r))|^2_{L_Q}d r .
\]
 Therefore
$I_n(h,\e)=\sum_{1\leq i\leq 6} I_{n,i}$, where
\begin{eqnarray*}
I_{n,1}&=&2\, \sqrt{\e}\;  \EX\Big( 1_{G_N(T)} \int_0^T \!\! ds \int_s^{\psi_n(s)}\!  \big(
\sigma(u_h^\e(r)) dW(r) \, , \, u_h^\e(r)-u_h^\e(s) \big)\Big) , \\
I_{n,2}&=&{\e}\;  \EX \Big( 1_{G_N(T)} \int_0^T  \!\!ds \int_s^{\psi_n(s)} \!\!
|\sigma(u_h^\e(r))|_{L_Q}^2 \, dr\Big) , \\
I_{n,3}&=&2 \,  \EX \Big( 1_{G_N(T)} \int_0^T \!\! ds \int_s^{\psi_n(s)} \!\! \big(
\tilde{\sigma}(u_h^\e(r)) \, h(r)\,  , \, u_h^\e(r)-u_h^\e(s) \big)\, dr\Big), \\
I_{n,4}&=&- 2 \,  \EX \Big( 1_{G_N(T)} \int_0^T  \!\! ds \int_s^{\psi_n(s)} \!\!
 \big\langle A \, u_h^\e(r)\, , \, u_h^\e(r)-u_h^\e(s)\big\rangle  \, dr\Big) ,  \\
I_{n,5}&=&-2 \, \EX \Big( 1_{G_N(T)} \int_0^T  \!\! ds \int_s^{\psi_n(s)} \!\!
  \big\langle B( u_h^\e(r))\, , \,   u_h^\e(r)-u_h^\e(s)\big\rangle \, dr\Big) ,  \\
I_{n,6}&=&- 2  \, \EX \Big( 1_{G_N(T)} \int_0^T \!\!  ds \int_s^{\psi_n(s)} \!\!
  \big(\tilde R (u_{h}^\e(r))\, , \,  u_h^\e(r)-u_h^\e(s)\big)\, dr\Big) .
\end{eqnarray*}
Clearly  $G_N(T)\subset G_N(r)$ for $r\in [0,T]$.
In particular this means that  $|u_h^\e(r)|+|u_h^\e(s)|\le N$
%in each integral $I_{n,j}$.
 on $G_N(r)$ for $0\leq s\leq r\leq T$.
 We use this observation in the considerations
below.
\par
The
Burkholder-Davis-Gundy inequality and ({\bf C2}) yield for  $0\leq \e \leq \e_0$
\begin{eqnarray*}
|I_{n,1}|&\leq &
 6\sqrt{\e} \int_0^T ds \; \EX \Big( %1_{G_N(T)}
 \int_s^{\psi_n(s)} |\s(u_h^\e(r))|_{L_Q}^2
 1_{G_N(r)}\, | u_h^\e(r)- u_h^\e(s)|^2 \;  dr \Big)^{\frac{1}{2}} \\ %\nonumber \\
&\leq & 6 \sqrt{2 \e_0 N }  \int_0^T ds \; \EX \Big( %1_{G_N(T)}
 \int_s^{\psi_n(s)}
[K_0+K_1\,  |u_h^\e(r)|^2  + K_2\,  \|u_h^\e(r)\|^2]\;   \, dr \Big)^{\frac{1}{2}}.
\end{eqnarray*}
Schwarz's inequality and Fubini's theorem as well as \eqref{eq3.1}, which holds
uniformly in  $\e\in ]0,\e_0]$ for fixed $\e_0>0$ since the constants $K_i$ and $L_i$
are multiplied by at most  $\e_0$,  imply
\begin{eqnarray} \label{In1}
|I_{n,1}|&\leq &
6 \sqrt{2 \e_0 N T }\, \Big[ \EX \int_0^T\!\! %1_{G_N(T)}\,
\big( K_0+K_1\, |u_h^\e(r)|^2 +
K_2\, \|u_h^\e(r)\|^2\big)\, \Big( \int_{(r-c2^{-n})\vee 0}^r ds\Big) \, dr
 \Big]^{\frac{1}{2}} \nonumber \\
&\leq & C_1 2^{-\frac{n}{2}}
\end{eqnarray}
for some constant $C_1$ depending only on $K_i$, $\tilde{K}_i, i=0,1,2$, $L_j$, $\tilde{L}_j, j=1,2$,
$R_1$,  $M$,
 $\e_0$, $N$ and $T$.
The property ({\bf C2}) and Fubini's theorem  imply that for $0\leq \e\leq \e_0$,
\begin{eqnarray} \label{In2}
|I_{n,2}|&\leq &\e \,   \EX \Big( 1_{G_N(T)}   \int_0^T \!\! ds \int_s^{\psi_n(s)} \!\!
\big(K_0+K_1|u_h^\e(r)|^2 + K_2 \|u_h^\e(r)\|^2 \big) dr\Big) \nonumber \\
&\leq &
\e_0  \EX \int_0^T  1_{G_N(T)} \, (K_0 +K_1\, N + K_2 \|u_h^\e(r)\|^2 )\, c 2^{-n}\, dr \;
\leq\;  C_2 2^{-n}
\end{eqnarray}
for some constant $C_2$ depending on $K_i$, $i=0,1,2$,  $\e_0$, $N$ and $T$.
Schwarz's inequality, Fubini's theorem,  ({\bf C2})  and the definition
\eqref{AM} of ${\mathcal A}_M$ yield
\begin{align}  \label{In3}
|I_{n,3}|&\leq 2   \;  \EX \Big( 1_{G_N(T)} \int_0^T \!\! ds \nonumber \\
& {}\quad \times \int_s^{\psi_n(s)} \!\!
 \big(\tilde{K}_0 +\tilde{K}_1|u_h^\e(r)|^2 + \tilde{K}_2 \|u_h^\e(r)\|^2
\big)^{\frac{1}{2}}\, |h(r)|_0 |\, u_h^\e(r)-u_h^\e(s)|\, dr\Big)
\nonumber \\
& \leq  4 \sqrt{N}  \; \EX  \int_{0}^T 1_{G_N(T)}
 |h(r)|_0 (\tilde{K}_0 +\tilde{K}_1 N  + \tilde{K}_2 \|u_h^\e(r)\|^2)^{\frac{1}{2}}
 \,
  \Big( \int_{(r-c2^{-n})\vee 0}^r ds \Big)\, dr  \nonumber \\
& \leq 4 \sqrt{N}\, c 2^{-n} \sqrt{M}\, \EX
  \Big( 1_{G_N(T)}\int_0^T (\tilde{K}_0 +\tilde{K}_1 N  + \tilde{K}_2 \|u_h^\e(r)\|^2)
 \, dr \Big)^{\frac{1}{2}}
\leq C_3\,  2^{-n},
\end{align}
for some constant $C_3$ depending on $\tilde{K}_i$, $i=0,1,2$, $M$ and $N$.
Using Schwarz's inequality we deduce that
\begin{eqnarray} \label{In4}
I_{n,4} &\leq &  2  \EX \Big( 1_{G_N(T)} \int_0^T  \!\! \! ds \int_s^{\psi_n(s)} \!\!\!
dr  \big[ -  \|u_h^\e(r)\|^2  +  \|u_h^\e(r)\| \|u_h^\e(s)\|\|\big]\Big) \nonumber \\
&\leq &
 \frac{1}{2}\;   \EX \Big( 1_{G_N(T)} \int_0^T ds \; \|u^\e_h(s)\|^2 \,
\int_s^{\psi_n(s)} dr \Big) \leq   c \;  N \;    2^{-(n+1)}.
\end{eqnarray}
The antisymmetry relation \eqref{as} and inequality \eqref{boundB1} yields
\begin{eqnarray*}
 \left|\big\langle B( u_h^\e(r))\, , \,   u_h^\e(r)-u_h^\e(s)\big\rangle\right|=
  \left|\big\langle B( u_h^\e(r))\, , \,  u_h^\e(s)\big\rangle\right|
\le  \|u_h^\e(r)\|^2 +C |u_h^\e(r)|^2 \|u_h^\e(s)\|^4_\HH .
\end{eqnarray*}
Therefore,
\begin{align} \label{In5}
|I_{n,5}| &\leq 2 \EX \Big( 1_{G_N(T)}\!\! \int_0^T  \!\!\! ds
\int_s^{\psi_n(s)} \!\!\! dr \ \|u^\e_h(r)\|^2 \Big)
\nonumber \\
& {}\quad +\, 2C \EX \Big( 1_{G_N(T)}\!\! \int_0^T  \!\!\! ds \|
u^\e_h(s)\|^4_\HH\int_s^{\psi_n(s)} \!\!\! dr  |u_h^\e(r)|^2 \Big)
\equiv I_{n,5}^{(1)}+  I_{n,5}^{(2)}.
\end{align}
Fubini's theorem implies
\begin{eqnarray} \label{In5a}
I_{n,5}^{(1)} &\leq& 2 \EX \Big( 1_{G_N(T)}\!\!
  \int_{0}^{T} \!\!\!
dr \ \|u^\e_h(r)\|^2  \int_{(r-c2^{-n})\vee 0}^r ds\Big) \nonumber \\
& \leq & 2c\, 2^{-n}\, \EX \Big( 1_{G_N(T)}\!\!
\int_0^T  \!\!\!
dr \ \|u^\e_h(r)\|^2 \Big) % \nonumber \\
%&
%= \frac{T}{2^{n-1}}\, \EX \Big( 1_{G_N(T)}\!\!
%\int_{0}^T  \!\!\!
 %\|u^\e_h(r)\|^2 dr \Big)
\le CN 2^{-n}.
\end{eqnarray}
Using  \eqref{interpol}, we deduce that  on $G_N(T)$ we have
\[
\int_0^T  \!\!\!  \| u^\e_h(s)\|^4_\HH ds\le a_0^2 \sup_{s\in [0,T]}| u^\e_h(s)\|^2
 \int_0^T  \!\!\! \| u^\e_h(s)\|^2 ds\le a_0^2  N^2.
\]
Thus
\begin{align} \label{In5b}
I_{n,5}^{(2)} &\leq 2C N
\EX \Big( 1_{G_N(T)}\!\! \int_0^T  \!\!\! ds
\| u^\e_h(s)\|^4_\HH\Big) c 2^{-n} \le 2 a_0^2  C N^3  c 2^{-n}.
\end{align}

Finally, Schwarz's inequality implies that
\begin{equation} \label{In6}
|I_{n,6}|\leq  2 \,\EX \Big[ 1_{G_N(T)} \int_0^T \!\! ds \int_s^{\psi_n(s)} \!\!\!
\left( R_0+R_1 |u_h^\e(r)|\right) \left( |u_h^\e(r)| + |u_h^\e(s)|\right)\,  dr\Big]
\leq
  C N^2 2^{-n}.
\end{equation}
Collecting the upper estimates from \eqref{In1}-\eqref{In6},
we conclude the proof of \eqref{time}.
\end{proof}

%\begin{remark}\label{re:step-f}
%{\rm
%The preceding lemma has been formulated in a general framework to be used in
%a forthcoming paper about the  support characterization  of  the solution
%to the stochastic equation \eqref{u}.
 In the setting of large deviations, we will use  Lemma \ref{timeincrement}
in the  case
when then $\s=\tilde\s$ satisfies  Condition ({\bf C2}) with $K_2=L_2=0$
%(so by Remark~\ref{re:s-tilde-s} both Conditions ({\bf C2})
%and  ({\bf C3})({\bf i}) are in force)
and with the following choice of the function $\psi_n$.
 For any integer $n$  define a step function $s\mapsto \bar{s}_n$
on  $[0,T]$ by the formula
\begin{equation} \label{step-f}
\bar{s}_n= t_{k+1}\equiv(k+1)T 2^{-n}~~\mbox{ for }~~s\in [k T 2^{-n}, (k+1) T 2^{-n}[.
\end{equation}
Then the map $\psi_n(s)=\bar{s}_n$ clearly satisfies the previous requirements with $c=T$.
%\end{remark}
\par
Now we return to the setting of Theorem~\ref{PGDue}.
\par
Let $\e_0>0$,   $(h_\e , 0 < \e \leq \e_0)$  be a family of random elements
taking values in the set ${\mathcal A}_M$ given by \eqref{AM}.
Let $u_{h_\e}$, or  strictly speaking, $u^\e_{h_\e}$, be
the solution of the corresponding stochastic control equation
 with initial condition $u_{h_\e}(0)=\xi \in H$:
\begin{eqnarray} \label{scontrol}
d u_{h_\e} + [Au_{h_\e} +B(u_{h_\e})+\tilde{R}(t,u_{h_\e})]dt
=\s(t,u_{h_\e}) h_\e(t) dt+\sqrt{\e} \; \s(t,u_{h_\e}) dW(t) .
\end{eqnarray}
Note that for $W^\e_.=  W_. + \frac{1}{\sqrt \e}
 \int_0^. h_\e(s)ds$ we have that  $u_{h_\e}={\mathcal G}^\e\big(\sqrt{\e} W^\e
 \big)$.
 The following proposition
establishes the weak convergence of the family $(u_{h_\e})$ as
$\e\to 0$. Its proof is similar to that of Proposition 4.3 in
\cite{DM}, but allows time dependent coefficients $\tilde{R}$ and
$\s$.
\begin{prop}  \label{weakconv}
Suppose that the conditions ({\bf C1}) and ({\bf C2}) are satisfied
with $K_2=L_2=0$. Suppose furthermore that $\tilde{R}$ and $\s$
satisfy the conditions {\bf (C3)(ii)}  and {\bf (C4)}. Let $\xi $ be
${\mathcal F}_0$-measurable such that $E|\xi|_H^4<+\infty$, and let
$h_\e$ converge to $h$ in distribution as random elements taking
values in ${\mathcal A}_M$, where this set is defined by \eqref{AM}
and endowed with the weak topology of the space $L_2(0,T;H_0)$. Then
as $\e \to 0$, the solution $u_{h_\e}$ of \eqref{scontrol} converges
in distribution to the solution $u_h$ of \eqref{dcontrol} in
$X=C([0, T]; H) \cap L^2((0, T); V)$ endowed with the norm
(\ref{norm}). That is, as $\e \to 0$,
 ${\mathcal G}^\e\Big(\sqrt{\e} \big( W_. + \frac{1}{\sqrt{\e}} \int_0^. h_\e(s)ds\big) \Big)$  converges in
distribution to $ {\mathcal G}^0\big(\int_0^. h(s)ds\big)$ in $X$.
\end{prop}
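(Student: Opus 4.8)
The plan is to verify the weak convergence hypothesis of the Budhiraja--Dupuis criterion by a \emph{pathwise energy estimate}, after reducing to almost sure convergence of the controls. Since $S_M$ with the weak topology of $L^2(0,T;H_0)$ is a Polish space and $\sqrt{\e}\,W\to0$ in $C([0,T];H)$ in probability, the pairs $(h_\e,\sqrt{\e}W)$ converge in distribution to $(h,0)$; by the Skorokhod representation theorem I may therefore work on an auxiliary probability space on which $h_\e\to h$ almost surely in the weak topology while the driving noise still vanishes. There it suffices to prove that $U_\e:=u_{h_\e}-u_h\to0$ in $X$ in probability. Fix $N>0$, let $G_N(T)$ be the localizing set introduced before Lemma~\ref{timeincrement} (with the solution $u_{h_\e}$), whose complement has probability $\le C/N$ uniformly in $\e$ by \eqref{eq3.1} and Chebyshev's inequality, and apply It\^o's formula to $|U_\e(t)|^2$.

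Next I would dispatch the deterministic-type terms. The linear dissipation yields $-2\int_0^t\|U_\e\|^2\,ds$; the bilinear term is controlled by the local monotonicity bound \eqref{diffB1}, whose $\eta\|U_\e\|^2$ part is absorbed by the dissipation and whose remainder contributes $C_\eta|U_\e|^2\|u_h\|_\HH^4$; the feedback term is handled by the Lipschitz property in ({\bf C3})({\bf ii}), and the term $([\s(u_{h_\e})-\s(u_h)]h_\e,U_\e)$ by ({\bf C2})({\bf ii}) with $K_2=L_2=0$, producing a factor $\sqrt{L_1}\,|h_\e|_0\,|U_\e|^2$. Since $\int_0^T(1+\|u_h\|_\HH^4+|h_\e|_0)\,ds$ is a.s.\ finite by \eqref{eq3.1} and $h_\e\in\mathcal A_M$, these terms fit the integrable coefficient of the two-step Gronwall Lemma~\ref{lemGronwall}. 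The genuinely stochastic contributions, namely $\e\int_0^t|\s(u_{h_\e})|_{L_Q}^2\,ds$ and the martingale $2\sqrt{\e}\int_0^t(\s(u_{h_\e})\,dW,U_\e)$, are $O(\e)$ in expectation, the first by ({\bf C2})({\bf i}), the second by the Burkholder--Davis--Gundy inequality together with \eqref{eq3.1}. Hence, up to terms tending to $0$ with $\e$, Gronwall's lemma reduces everything to showing that $R_\e(t):=2\int_0^t(\s(u_h(s))(h_\e(s)-h(s)),U_\e(s))\,ds$ satisfies $\sup_{t\le T}|R_\e(t)|\to0$ on $G_N(T)$.

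This term is the main obstacle, because $h_\e-h$ converges only weakly while the integrand still depends on $\e$ through $U_\e$. To treat it I would discretize time using the step function $\bar s_n$ of \eqref{step-f} and freeze both $U_\e$ and $\s(u_h)$ at the grid points. The error from replacing $U_\e(s)$ by $U_\e(\bar s_n)$ is controlled on $G_N(T)$ by Lemma~\ref{timeincrement} applied to $u_{h_\e}$ (for the given $\e$) and to $u_h$ (the case $\e=0$), and the error from replacing $\s(u_h(s))$ by $\s(u_h(\bar s_n))$ by the H\"older regularity ({\bf C4}) together with $\int_0^T\|u_h\|\,ds<\infty$; both vanish as $n\to\infty$, uniformly in $\e$. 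On each of the finitely many grid intervals $I_k$ the frozen term equals $\big(\s(u_h(\bar s_n))\,\Delta_k^\e\,,\,U_\e(\bar s_n)\big)$ with $\Delta_k^\e:=\int_{I_k}(h_\e-h)\,ds$. The crucial point is that $\Delta_k^\e\rightharpoonup0$ weakly in $H_0$ (from the a.s.\ weak convergence of $h_\e$), while $\s(u_h(\bar s_n))\in L_Q$ is Hilbert--Schmidt, hence \emph{compact}, as an operator $H_0\to H$; therefore $\s(u_h(\bar s_n))\Delta_k^\e\to0$ \emph{strongly} in $H$. Paired with $U_\e(\bar s_n)$, which is merely \emph{bounded} on $G_N(T)$, each of the finitely many summands tends to $0$ as $\e\to0$ — and since only boundedness of $U_\e(\bar s_n)$ is used, no convergence of $U_\e$ is presumed and no circularity arises.

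Putting the pieces together I would let $\e\to0$ first (killing the stochastic terms and each frozen grid term), then $n\to\infty$ (killing the discretization errors via Lemma~\ref{timeincrement} and ({\bf C4})), and finally $N\to\infty$ (killing the localization error via \eqref{eq3.1}). The Gronwall estimate then yields $\EX\big[\mathbf 1_{G_N(T)}\big(\sup_{t\le T}|U_\e(t)|^2+\int_0^T\|U_\e\|^2\,ds\big)\big]\to0$, whence $U_\e\to0$ in $X$ in probability, which transfers back to the desired convergence in distribution of $u_{h_\e}$ to $u_h$. Beyond the compactness argument, the only delicate point is the joint Skorokhod reduction incorporating the vanishing noise $\sqrt{\e}W$; everything else is careful but routine bookkeeping of the energy identity combined with the a priori bounds \eqref{eq3.1} and the increment Lemma~\ref{timeincrement}.
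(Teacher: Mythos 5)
Your proposal is correct and follows essentially the same route as the paper's own proof: Skorokhod reduction to a.s.\ weakly convergent controls, an It\^o energy estimate with localization and Gronwall, time discretization of the critical term $\int_0^t\big(\s(u_h)(h_\e-h),U_\e\big)\,ds$ via Lemma~\ref{timeincrement} and condition ({\bf C4}), the compactness of the Hilbert--Schmidt operator $\s(t_k,u_h(t_k)):H_0\to H$ to kill the frozen grid terms, and the same order of limits $\e\to0$, then $n\to\infty$, then $N\to\infty$. The only slips are cosmetic: the localizing set must bound $u_h$ as well as $u_{h_\e}$ (the paper's $G_{N,\e}(T)$), since the Gronwall exponent requires the deterministic bound $\int_0^T\|u_h(s)\|_\HH^4\,ds\le a_0N^2$ via \eqref{interpol} rather than mere a.s.\ finiteness, and the ordinary Gronwall lemma suffices here (Lemma~\ref{lemGronwall} is not needed, because the martingale term is simply estimated as $O(\sqrt{\e})$ in expectation by Burkholder--Davis--Gundy, with the indicator $1_{G_{N,\e}(s)}$ placed inside the stochastic integral for adaptedness).
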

\begin{proof}
 Since ${\mathcal A}_M$ is a Polish space (complete
separable metric space), by the Skorokhod representation theorem, we
can construct processes $(\tilde{h}_\e, \tilde{h}, \tilde{W}^\e)$
such that the joint distribution of $(\tilde{h}_\e,  \tilde{W}^\e)$
is the same as that of $(h_\e, W^\e)$,  the distribution of
$\tilde{h}$ coincides with that of $h$, and $ \tilde{h}_\e  \to
\tilde{h}$,  a.s., in the (weak) topology of $S_M$.   Hence a.s. for
every $t\in [0,T]$, $\int_0^t \tilde{h}_\e(s) ds - \int_0^t
\tilde{h}(s)ds \to 0$ weakly in $H_0$. To lighten notations, we will
write $(\tilde{h}_\e, \tilde{h}, \tilde{W}^\e)=(h_\e,h,W)$.

Let $U_\e=u_{h_\e}-u_h$; then  $U_\e(0)=0$ and
\begin{align}\label{difference2}
d U_\e + \big[AU_\e & +B(u_{h_\e})-B(u_h)+\tilde R(t,u_{h_\e})-\tilde R(t,u_h)\big]dt
\nonumber\\
& =\big[\s(t,u_{h_\e}) h_\e -\s(t,u_h) h\big] dt +\sqrt{\e} \;
\s(t,u_{h_\e}) dW(t).
\end{align}
 On any finite time interval $[0, t]$ with $t\leq T$, It\^o's formula,
\eqref{diffB1} with $\eta = \frac{1}{2}$  and condition ({\bf C2}) yield for $\e \geq 0 $:
\begin{align*}
&  |U_\e(t)|^2 +2 \int_0^t   \|U_\e(s)\|^2\, ds
 =   -2\int_0^t \big\langle B(u_{h_\e}(s))-B(u_{h}(s))\, ,\,  U_\e(s)\big\rangle ds\\
& \quad
-2 \int_0^t \big( \tilde R(s,u_{h_\e}(s))-\tilde R(s,u_h(s))\, ,\, U_\e(s)\big)\, ds
\\
& \quad +
 2\int_0^t \big(\s(s,u_{h_\e}(s))h_{\e}(s)-\s(s,u_h(s))\, h(s), U_\e(s) \big)ds \nonumber \\
& \quad +  2\sqrt{\e}\int_0^t \big( U_\e(s), \s(s,u_{h_\e}(s)) dW(s)\big)
+ \e\int_0^t|\s(s,u_{h_\e}(s))|^2_{L_Q}\, ds  \\
 &\;  \leq \int_0^t \!\: \|U_\e(s)\|^2\, ds +
  \sum_{i=1}^3 T_i(t,\e) + 2   \int_0^t\!\!
    \big( {C}_{\frac{1}{2}} \, \|u_h(s)\|^4_\HH +
R_1 + \sqrt{L_1}  |h_\e(s)|_0\big) |U_\e(s)|^2   ds,
\end{align*}
where
\begin{eqnarray*}
T_1(t,\e)&=& 2\sqrt{\e}\int_0^t \big( U_\e(s), \s(s,u_{h_\e}(s))\,  dW(s)  \big), \\
T_2(t,\e)&= & \e   \int_0^t (K_0+K_1|u_{h_\e}(s)|^2) ds , \\
T_3(t,\e)&=& 2\int_0^t \Big( \s(s,u_h(s))\, \big( h_{\e}(s)-h(s)\big), \, U_\e(s)\Big)\, ds.
\end{eqnarray*}
This yields the following inequality
\begin{equation} \label{total-error}
  |U_\e(t)|^2 +   \int_0^t\!\! \|U_\e(s)\|^2ds \leq
 \sum_{i=1}^3 T_i(t,\e)  + 2  \int_0^t\!\!
 \Big[  C_{\frac{1}{2}} \, \|u_h(s)\|_\HH^4 +
R_1 + \sqrt{L_1}  |h_\e(s)|_0\Big] |U_\e(s)|^2   \, ds.
\end{equation}
We want  to show that as $\e \to 0$,  $\|U_\e\|_X \to 0$ in
probability, which implies
 that $u_{h_\e} \to u_h$  in distribution in $X$.
 Fix $N>0$ and for $t\in [0,T]$ let
\begin{eqnarray*}  G_N(t)&=&\Big\{ \sup_{0\leq s\leq t} |u_h(s)|^2 \leq N\Big\} \cap
 \Big\{ \int_0^t \|u_h(s)\|^2  ds \leq N
\Big\}, \\
G_{N,\e}(t)&=&  G_N(t)\cap \Big\{ \sup_{0\leq s\leq t} |u_{h_\e}(s)|^2 \leq N\Big\} \cap
 \Big\{ \int_0^t \|u_{h_\e}(s)\|^2  ds \leq N
\Big\}.
\end{eqnarray*}
The proof consists in two steps.\\
\textbf{Step 1:}
For any $\e_0 \in ]0,1]0$, $ {\displaystyle \sup_{0<\e\leq \e_0}\; \sup_{h,h_\e \in {\mathcal A}_M}
\PX(G_{N,\e}(T)^c )\to 0 \;
\mbox{\rm as }\; N\to \infty.}$
\\
Indeed, for $\e\in ]0,\e_0]$, $h,h_\e \in {\mathcal A}_M$, the Markov inequality and
the a priori estimate  \eqref{eq3.1}, which holds uniformly in $\e\in ]0,\e_0]$, imply
\begin{align}\label{G-c}
&  \PX ( G_{N,\e}(T)^c )
\leq   \PX \Big(\sup_{0\leq s\leq T} |u_h(s)|^2 > N \Big)
 +\PX \Big(\sup_{0\leq
s\leq T} |u_{h_\e}(s)|^2 > N \Big)\nonumber \\
&\qquad \qquad\qquad   +  \PX\  \Big( \int_0^T \|u_h(s)\|^2 ds  >N\Big)
+ \PX\Big(\int_0^T \|u_{h_\e}(s)\|^2 \big)ds > N \Big)\nonumber \\
&\leq  \frac{1}{N}
  \sup_{ \;h, h_\e \in {\mathcal A}_M}
 \EX
\Big( \sup_{0\leq s\leq T} |u_h(s)|^2 + \sup_{0\leq s\leq T}
|u_{h_\e}(s)|^2
+ \int_0^T(\|u_h(s)\|^2+ \|u_{h_\e}(s)\|^2 )ds \Big) \nonumber \\
&\leq
 {C \, \big(1+ E|\xi|^4\big)}{N}^{-1},
\end{align}
for some constant $C$ depending on $T$ and $M$.

\noindent \textbf{Step 2:} Fix $N>0$,
 $h, h_\e \in {\mathcal A}_M$  such that  as $\e \to 0$, $h_\e\to h$ a.s. in the weak topology
 of $L^2(0,T;H_0)$; then  one has  as $\e \to 0$:
\begin{equation} \label{cv1}
\EX\Big[ 1_{G_{N,\e}(T)} \Big( \sup_{0\leq t\leq T } |U_\e(t)|^2 + \int_0^T  \|U_\e(t)\|^2 \, dt\Big)
\Big] \to 0.
\end{equation}
Indeed,  \eqref{total-error}  and Gronwall's lemma imply that on $G_{N,\e}(T)$,
\[ \sup_{0\leq t\leq T} |U_\e(t)|^2 \leq \Big[ \sup_{0\leq t\leq T}
 \big( T_1(t,\e) + T_3(t,\e)\big) +
\e C_*\Big] \, \exp\Big(2a_0 {C}_{\frac{1}{2}} N^2
 +  2R_1 T+2\sqrt{L_1 M T}\Big),\]
 where $C_*= T(K_0+K_1N)$.
We also use here the fact that by \eqref{interpol}
\[
\int_0^T  \!\!\!  \| u_h(s)\|^4_\HH ds\le a_0\sup_{s\in [0,T]}| u_h(s)|^2
 \int_0^T  \!\!\! \| u_h(s)\|^2 ds\le a_0 N^2\quad\mbox{on $G_{N,\e}(T)$.}
\]

Using again \eqref{total-error}  we deduce that for some constant
$\tilde{C}=C(T,M,N)$,
 one has for every $\e>0$:
\begin{equation} \label{*}
\EX\big( 1_{G_{N,\e}(T)} \, \|U_\e\|_X^2 \big) \leq \tilde{C} \Big( \e  + \EX \Big[ 1_{G_{N,\e}(T)} \,
\sup_{0\leq t\leq T} \big( T_1(t,\e) + T_3(t,\e)\big) \Big] \Big).
\end{equation}
Since the sets $G_{N,\e}(.)$ decrease, $\EX\big(1_{G_{N,\e}(T)}
 \sup_{0\leq t\leq T} |T_1(t,\e)|\big) \leq
\EX(\lambda_\e)$, where
\[ \lambda_\e := 2\sqrt{\e} \sup_{0\leq t\leq T }
\Big|\int_0^t 1_{G_{N,\e}(s)} \Big( U_\e(s), \s(s,u_{h_\e}(s)) dW(s) \Big)\Big|.\]
The scalar-valued random variables $\lambda_\e$ converge to 0 in $L^1$ as $\e\to 0$.
Indeed, by the Burkholder-Davis-Gundy inequality, ({\bf C2}) and the definition of $G_{N,\e}(s)$,
we have
\begin{align}
\EX (\lambda_\e) & \leq  6\sqrt{\e} \; \EX
\Big\{\int_0^T 1_{G_{N,\e}(s)} \,  |U_\e(s)|^2 \;
 |\s(s, u_{h_\e}(s))|^2_{L_{Q}}
 ds\Big\}^\frac12 \nonumber \\
&\leq  6\sqrt{\e} \; \EX \Big[  \Big\{ 4N \int_0^T 1_{G_{N,\e}(s)}\,
 (K_0+ K_1| u_{h_\e}(s) |^2 )  ds\Big\}^\frac12 \Big]
\leq  C(T,N) \,   \sqrt{\e}. \label{lambda1}
\end{align}
\par
 In further estimates we use Lemma~\ref{timeincrement} with $\psi_n=\bar{s}_n$, where
$\bar{s}_n$ is the step function defined in \eqref{step-f}.
For any $n,N \geq 1$, if we set $t_k=kT2^{-n}$ for $0\leq k\leq 2^n$,  we obviously have
\begin{equation}\label{t3}
\EX\Big( 1_{G_{N,\e}(T)}\sup_{0\leq t\leq T} |T_3(t,\e)| \Big)
\leq 2\;  \sum_{i=1}^4  \tilde{T}_i(N,n, \e)+ 2 \; \EX \big( \bar{T}_5(N,n,\e)\big),
\end{equation}
 where
\begin{align*}
\tilde{T}_1(N,n,\e)=& \EX \Big[ 1_{G_{N,\e}(T)} \sup_{0\leq t\leq T}  \Big| \int_0^t  \Big(  \s(s,u_h(s))
  \big(h_\e(s)-h(s)\big) \, ,\,
\big[U_\e(s)-U_\e(\bar{s}_n)\big] \Big)  ds\Big| \Big] ,\\
\tilde{T}_2 (N,n,\e)=& \EX\Big[ 1_{G_{N,\e}(T)} \\ & {}\quad
\times\sup_{0\leq t\leq T} \Big| \int_0^t
\Big( [\s(s,u_h(s)) - \s(\bar{s}_n, u_h(s))] (h_\e(s)-h(s))\, ,\, U_\e(\bar{s}_n)\Big)ds \Big|\Big], \\
\tilde{T}_3 (N,n,\e)=& \EX\Big[  1_{G_{N,\e}(T)}
\\ & {}\quad \times
\sup_{0\leq t\leq T} \Big| \int_0^t \Big( \big[ \s(\bar{s}_n,u_h(s))
- \s(\bar{s}_n, u_h(\bar{s}_n))\big]
\big(h_\e(s) - h(s) \big)\, ,\, U_\e(\bar{s}_n)\Big) ds\Big| \Big] ,\\
\tilde{T}_4(N,n,\e)=& \EX \Big[  1_{G_{N,\e}(T)} \sup_{1\leq k\leq 2^n}  \sup_{t_{k-1}\leq t\leq t_k}
\Big| \Big( \sigma(t_k, u_h(t_k))  \int_{t_{k-1}}^t (h_\e(s)-h(s))\, ds \, ,\, U_\e(t_k)\Big) \Big| \Big]\\
\bar{T}_5(N,n, \e)=&  1_{G_{N,\e}(T)} \sum_{k=1}^{2^n} \Big| \Big( \s(t_k, u_h(t_k))
\int_{t_{k-1}}^{t_k }  \big(h_\e(s)-h(s)\big)\, ds \, ,\,
U_\e(t_k )\Big) \Big| .
\end{align*}
%Here above $\bar{s}_n$ is the step function of $s$ defined in \eqref{step-f}
%and $t_k=kT 2^{-n}$.
Using Schwarz's inequality, ({\bf C2}) and Lemma~\ref{timeincrement}
with $\psi_n=\bar{s}_n$,
we deduce that  for some constant
$\bar{C}_1:= C(T,M,N)$ and any   $\e \in ]0, \e_0]$,
\begin{align} \label{eqT1}
&  \tilde{T}_1(N,n,\e)\leq
 \EX\Big[ 1_{G_{N,\e}(T)}  \int_0^T
 \big( K_0+K_1|u_h(s)|^2\big)^{\frac{1}{2}}
|h_\e(s)-h(s)|_0\, \big| U_\e(s)-U_\e(\bar{s}_n)\big|\, ds\Big]  \nonumber \\
& \quad   \leq
\Big(  \EX \Big[ 1_{G_{N,\e}(T)}  \int_0^T  \big\{ |u_{h_\e}(s) - u_{h_\e}(\bar{s}_n)|^2 +
 |u_{h}(s) - u_{h}(\bar{s}_n)|^2 \big\}\, ds\Big] \Big)^{\frac{1}{2}}
\nonumber \\
&\qquad \times
\sqrt{2(K_0 + K_1N)}\;  \Big( \EX  \int_0^T |h_\e(s)-h(s)|_0^2\, ds \Big)^{\frac{1}{2}}
\leq \bar{C}_1 \;  2^{-\frac{n}{4}}.
\end{align}
A similar computation based on ({\bf C2})  and Lemma \ref{timeincrement} yields  for
some constant $\bar{C}_3:=C(T,M,N)$ and any   $\e \in ]0, \e_0]$
\begin{align} \label{eqT2}
 \tilde{ T}_3 (N,n,\e) &\leq  \sqrt{2NL_1} \Big( \EX \Big[ 1_{G_{N,\e}(T)}  \int_0^T\!\!
 |u_{h}(s) - u_{h}(\bar{s}_n)|^2 \, ds\Big]  \Big)^{\frac{1}{2}} \Big(
 \EX \int_0^T \!\! |h_\e(s)-h(s)|_0^2  ds \Big)^{\frac{1}{2}}
 \nonumber \\
& \leq \bar{C}_3 \;  2^{-\frac{n}{4}}.
\end{align}
The H\"older regularity {\bf (C4)} imposed on $\s(.,u)$ and Schwarz's inequality imply that
\begin{equation}  \label{eqHolder}
\tilde{T}_2(N,n,\e)\leq C \, \sqrt{N} \, 2^{-n\gamma}\, \EX\Big(
1_{G_{N,\e}(T)} \int_0^T\left(1+\| u_h(s)\|\right)  |h_\e(s)-h(s)|\, ds \Big)
\leq \bar{C}_2  2^{-n\gamma}
\end{equation}
for some constant $\bar{C}_2=C(T,M,N)$.
Using Schwarz's inequality and ({\bf C2}) we deduce  for $\bar{C}_4=C(N,M)$
and any $\e \in ]0, \e_0]$
\begin{align} \label{eqT3}
\tilde{T}_4(N,n,\e)&\leq  \EX \Big[  1_{G_{N,\e}(T)} \sup_{1\leq k\leq 2^n}
\big(K_0+K_1| u_h(t_k)|^2
\big)^{\frac{1}{2}} \int_{t_{k-1}}^{t_k}\!\! |h_\e(s)-h(s)|_0
\, ds \, |U_\e(t_k)| \Big]\nonumber \\
&\leq 2 \sqrt{ N(K_0+K_1N)}\;  \EX\Big( \sup_{1\leq k\leq 2^n}
 \int_{t_{k-1}}^{t_k} |h_\e(s)-h(s)|_0 \, ds\Big)
\leq 4  \bar{C}_4\;  2^{-\frac{n}{2}}.
\end{align}
Finally, note that the weak convergence of $h_\e$ to $h$ implies that for any $a,b\in [0,T]$, $a<b$,
the integral $\int_a^b h_\e(s) ds \to \int_a^b h(s) ds$ in the weak topology of $H_0$.
Therefore, since for
the operator $\sigma(t_k, u_h(t_k))$ is compact from $H_0$ to $H$, we deduce that
for every $k$,
\[
\Big| \sigma(t_k, u_h(t_k)) \Big(   \int_{t_{k-1}}^{t_k} h_\e(s) ds - \int_{t_{k-1}}^{t_k} h(s) ds
  \Big) \Big|_H \to 0~~\mbox{ as }~~\e \to 0.
\]
Hence a.s. for fixed $n$ as $\e \to 0$, $\bar{T}_5 (N,n,\e,\omega) \to 0$. Furthermore,
 $\bar{T}_5(N,n,\e,\omega)
\leq C(K_0,K_1,N, M)$ and hence the dominated convergence theorem proves that for any
fixed $n,N$, $\EX(\bar{T}_5(N,n,\e))\to 0$ as $\e \to 0$.
\par
Thus, \eqref{t3}--\eqref{eqT3} imply that for any fixed $N\geq 1$  and any integer $n\geq 1$
\begin{equation*}
\limsup_{\e\to 0}\EX \Big[ 1_{G_{N,\e}(T)} \sup_{0\leq t\leq T} |T_3(t,\e)|\Big] \leq
C_{N,T,M}\;  2^{-n(\gamma \wedge \frac{1}{4} )}.
\end{equation*}
Since $n$ is arbitrary, this yields for any integer $N\geq 1$:
\begin{equation*}
\lim_{\e\to 0}\EX \Big[ 1_{G_{N,\e}(T)} \sup_{0\leq t\leq T} |T_3(t,\e)|\Big] =0 .
\end{equation*}
Therefore from \eqref{*} and \eqref{lambda1} we obtain \eqref{cv1}.
By the Markov inequality
\[
\PP(\|U_\e\|_X  > \de )  \leq  \PP(G_{N,\e}(T)^c )+ \frac{1}{\delta^2}
 \EX\Big( 1_{G_{N,\e}(T)} \|U_\e\|^2_X\Big)
~~\mbox{ for any }~~ \de>0.
\]
Finally,  \eqref{G-c} and \eqref{cv1} yield that for any integer $N\geq 1$,
\[
\limsup_{\e\to 0} \PP(\|U_\e\|_X  > \de )\le  C(T,M) N^{-1},
%  ~~\mbox{ for any }~~ N\ge 1
\]
for some constant $C(T,M)$ which does not depend on $N$.
This implies  $\lim_{\e\to 0} \PP(\|U_\e\|_X  > \de )=0$ for any $\delta>0$,
which  concludes the proof of the proposition.
\end{proof}

The following compactness result
is the second ingredient which allows to transfer the  LDP  from $\sqrt{\e} W$ to $u^\e$.
 Its proof is similar to that of Proposition \ref{weakconv}
and easier; it will be sketched (see also \cite{DM}, Proposition 4.4).
\begin{prop}  \label{compact}
Suppose that ({\bf C1}) and ({\bf C2}) hold  with $K_2=L_2=0$  and that conditions {\bf (C3)(ii)}
 and {\bf (C4)} hold. Fix  $M>0$, $\xi\in H$ and let
$ K_M=\{u_h \in X :  h \in S_M \}$,
where $u_h$ is the unique solution of the deterministic control
equation \eqref{dcontrol} and $X  = C([0, T]; H) \cap L^2(0, T; V)$.
Then $K_M$ is a compact subset of  $ X$.
\end{prop}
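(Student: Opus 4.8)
The plan is to realize $K_M$ as the continuous image of a weakly compact set. Recall that $S_M$ is a closed ball of the Hilbert space $L^2(0,T;H_0)$, hence weakly compact, and that equipped with the weak topology it is a compact metric (Polish) space. Since $K_M$ is precisely the image of $S_M$ under the map $h\mapsto u_h=\mathcal{G}^0\big(\int_0^\cdot h(s)\,ds\big)$, it suffices to prove that this map is sequentially continuous: whenever $h_m\rightharpoonup h$ in $L^2(0,T;H_0)$ with $h_m,h\in S_M$, the solutions $u_{h_m}$ converge to $u_h$ strongly in $X$. Granting this, any sequence in $K_M$ comes from a sequence $(h_m)\subset S_M$, from which one extracts a weakly convergent subsequence $h_m\rightharpoonup h\in S_M$; the limit $u_h$ lies in $K_M$, so $K_M$ is sequentially compact, hence compact since $X$ is metric. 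The argument is a deterministic specialization of Proposition~\ref{weakconv}, and is simpler because no stochastic term, no expectation, and no localization sets $G_{N,\e}$ are needed.

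First I would record uniform a priori bounds. Applying Theorem~\ref{th3.1} in the deterministic case (formally $\sqrt{\e}=0$) yields a constant $N=N(M,T,|\xi|)$ with
\[
\sup_{0\le t\le T}|u_h(t)|^2 + \int_0^T\|u_h(s)\|^2\,ds + \int_0^T\|u_h(s)\|_\HH^4\,ds \le N,\qquad \forall\, h\in S_M.
\]
Because these bounds are deterministic and uniform over $S_M$, every $u_{h_m}$ and $u_h$ lives in one fixed ball, which is what removes the need for the cutoff sets used in Proposition~\ref{weakconv}.

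Next, for $U_m:=u_{h_m}-u_h$ (so $U_m(0)=0$), I would derive the energy identity for $|U_m(t)|^2$ from \eqref{dcontrol}. Using \eqref{diffB1} with $\eta=\tfrac12$ to absorb the bilinear contribution into the viscous term, the Lipschitz bound of {\bf (C3)(ii)} for the $\tilde R$ term, and {\bf (C2)(ii)} with $L_2=0$ (giving $|\s(s,u_{h_m})-\s(s,u_h)|_{L_Q}^2\le L_1|U_m|^2$) to treat the piece $[\s(s,u_{h_m})-\s(s,u_h)]h_m$ of the control term, one obtains after the splitting $\s(s,u_{h_m})h_m-\s(s,u_h)h=[\s(s,u_{h_m})-\s(s,u_h)]h_m+\s(s,u_h)(h_m-h)$ the inequality
\[
|U_m(t)|^2 + \int_0^t\|U_m(s)\|^2\,ds \le T_3^m(t) + 2\int_0^t\big[C_{\frac12}\|u_h(s)\|_\HH^4 + R_1 + \sqrt{L_1}\,|h_m(s)|_0\big]\,|U_m(s)|^2\,ds,
\]
where $T_3^m(t)=2\int_0^t\big(\s(s,u_h(s))(h_m(s)-h(s)),\,U_m(s)\big)\,ds$. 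Since $\int_0^T\|u_h\|_\HH^4\,ds\le a_0^2N^2$ by \eqref{interpol} and $\int_0^T|h_m|_0\,ds\le\sqrt{MT}$, Gronwall's lemma bounds $\|U_m\|_X^2$ by a fixed constant times $\sup_{0\le t\le T}|T_3^m(t)|$. Everything thus reduces to showing $\sup_t|T_3^m(t)|\to0$.

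The crux, and the main obstacle, is exactly this last convergence under mere weak convergence of $h_m$: the integral cannot be passed to the limit directly because the test factor $U_m(s)$ and the time argument of $\s(s,\cdot)$ both depend on $m$. I would resolve it precisely as in Proposition~\ref{weakconv}, using the dyadic step function $\bar s_n$ of \eqref{step-f} with a discretization level $n$ chosen independently of the sequence index $m$. Decomposing $T_3^m$ into the five pieces $\tilde T_1,\dots,\tilde T_4,\bar T_5$, the increments $U_m(s)-U_m(\bar s_n)$ and $u_h(s)-u_h(\bar s_n)$ are controlled by the time-increment estimate \eqref{time} of Lemma~\ref{timeincrement} (valid at $\e=0$), the frozen-time error $\s(s,u_h)-\s(\bar s_n,u_h)$ by the Hölder regularity {\bf (C4)}, while the leading piece $\bar T_5$ tends to $0$ as $m\to\infty$ for each fixed $n$ because each $\s(t_k,u_h(t_k))$ is a compact operator from $H_0$ into $H$ and $\int_{t_{k-1}}^{t_k}(h_m-h)\,ds\rightharpoonup0$ in $H_0$. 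This yields $\limsup_m\sup_t|T_3^m(t)|\le C\,2^{-n(\gamma\wedge\frac14)}$ for every $n$; letting $n\to\infty$ gives $\sup_t|T_3^m(t)|\to0$, hence $\|U_m\|_X\to0$, which establishes the continuity and completes the proof.
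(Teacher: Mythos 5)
Your proof is correct and follows essentially the same route as the paper's: uniform deterministic a priori bounds from Theorem~\ref{th3.1}, the energy estimate via \eqref{diffB1} with $\eta=\tfrac12$ and Gronwall, then the dyadic decomposition of the control term handled through Lemma~\ref{timeincrement} at $\e=0$, the H\"older condition ({\bf C4}), and the compactness of the operators $\s(t_k,u_h(t_k))$ for the leading piece. The only difference is cosmetic: you obtain compactness of $K_M$ directly as the sequentially continuous image of the weakly compact set $S_M$, whereas the paper establishes sequential relative compactness and closedness of $K_M$ in two separate steps.
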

\begin{proof}
Let $\{u_n\}$ be a sequence in $K_M$, corresponding to solutions of
(\ref{dcontrol}) with controls $\{h_n\}$ in $S_M$:
\begin{eqnarray*} %\label{dcontroln}
d u_n(t) + \big[A u_n(t) +B(u_n(t))+\tilde R (t,u_n(t))\big]dt =\s(t,u_n(t)) h_n(t) dt, \;\;
u_n(0)=\xi.
\end{eqnarray*}
 Since $S_M$ is a
bounded closed subset in the Hilbert space $L^2(0, T; H_0)$, it
is weakly compact. So there exists a subsequence of $\{h_n\}$, still
denoted as $\{h_n\}$, which converges weakly to a limit $h$ in
$L^2(0, T; H_0)$. Note that in fact $h \in S_M$ as $S_M$ is
closed. We now show that the corresponding subsequence of
solutions, still denoted as  $\{u_n\}$, converges in $ X$
 to $u$ which is the solution of the
following ``limit'' equation
\begin{eqnarray*}% \label{limiteqn}
d u(t) + [A u(t) +B(u(t))+\tilde R(t,u(t))]dt =\s(t, u(t)) h(t) dt, \;\;
u(0)=\xi.
\end{eqnarray*}
This will complete the proof of the compactness of $K_M$.
  To ease notation
 we will often drop the time parameters $s$, $t$, ... in the equations and integrals.
\par
Let $U_n=u_n-u$; using \eqref{diffB1} with $\eta=\frac{1}{2}$, ({\bf C2})  and Young's inequality,
we deduce that  for $t\in [0, T]$,
\begin{align}
& |U_n(t)|^2 +2 \int_0^t\!\!  \|U_n(s)\|^2  ds = \nonumber \\ &
\quad -2\int_0^t \!\! \big\langle B(u_n(s))-B(u(s)) , U_n(s)\big\rangle\, ds
 - 2\int_0^t \big( \tilde R(s,u_{n}(s))-\tilde R(s,u_h(s)) , U_n(s)\big) ds \nonumber \\
&\quad   + 2\int_0^t \Big\{ \Big( \big[ \s(s,u_n(s))
-\s(s,u(s))\big] h_{n}(s), U_n(s)\Big) \nonumber \\ &{}\qquad
{}\qquad{}\qquad{}\qquad {}\qquad{}\qquad{}\qquad
 + \big( \s(s,u(s)) \big(h_n(s)-h(s)\big)\, ,\, U_n(s)\big)\Big\} ds  \nonumber \\
& \leq   \int_0^t\!\!  \|U_n(s)\|^2  ds + 2 \int_0^t |U_n(s)|^2 \big(
C_{\frac{1}{2}}\|u(s)\|^4_\HH
 + R_1 +\sqrt{ L_1} \, |h_n(s)|_0\big)\, ds
\nonumber \\
&\quad
 + 2 \int_0^t \Big(  \s(s,u(s))\, [h_{n}(s)-h(s)]\; ,\; U_n(s)\Big) \, ds  .
\label{error1}
\end{align}
The inequality \eqref{eq3.1} implies that there exists a finite positive constant $\bar{C}$ such that
\begin{equation}\label{est-uni}
  \sup_n \Big[ \sup_{0\leq t\leq T} \big(|u(t)|^2 + |u_n(t)|^2\big) + \int_0^T
\big( \|u(s)\|^2 +\|u(s)\|^4_\HH+\|u_n(s)\|^2\big)ds \Big] =  \bar{C}.
\end{equation}
Thus Gronwall's lemma implies that
\begin{equation} \label{errorbound}
 \sup_{ t\leq T} |U_n(t)|^2 +\int_0^T \|U_n(t)\|^2\, dt  \leq
\exp\Big(2
  \big( {C}_{\frac{1}{2}} \bar{C}  + R_1\,  T + \sqrt{L_1 M T}  \big)\Big)\,
  \sum_{i=1}^5 I_{n,N}^{i}  ,
\end{equation}
where,  as in the proof of  Proposition~\ref{weakconv}, we have:
\begin{eqnarray*}
I_{n,N}^1&=& \int_0^T \big| \big( \s(s,u(s))\,  [h_n(s)- h(s)]\, ,\, U_n(s)-U_n(\bar{s}_N)\big)\big|\, ds,
\\
I_{n,N}^2&=& \int_0^T\Big|  \Big( \big[ \s(s,u(s)) - \s(\bar{s}_N, u(s)) \big]  [h_n(s)-h(s)]\, ,\,
U_n(\bar{s}_N)\Big)\Big| \, ds,  \\
I_{n,N}^3&=& \int_0^T\Big|  \Big( \big[ \s(\bar{s}_N,u(s)) - \s(\bar{s}_N,u(\bar{s}_N))\big]
  [h_n(s)-h(s)]\, ,\,
U_n(\bar{s}_N)\Big)\Big| \, ds,  \\
I_{n,N}^4&=& \sup_{1\leq k\leq 2^N} \sup_{t_{k-1}\leq t\leq t_k} \Big|\Big( \s(t_k,u(t_k ))
 \int_{t_{k-1 }}^t (h_\e(s)-h(s))
ds \; ,\; U_n(t_k) \Big)\Big| , \\
I_{n,N}^5&=& \sum_{k=1}^{2^N} \Big( \s(t_k, u(t_k )) \,  \int_{t_{k-1}}^{t_k }
[ h_n(s)-h(s)]\, ds   \; ,\; U_n(t_k ) \Big) .
\end{eqnarray*}
Schwarz's inequality, ({\bf C2}) and Lemma \ref{timeincrement} imply that
for some constants   $C_i$,  which do not
depend on $n$ and $N$,
\begin{align} \label{estim1}
I_{n,N}^1 &\leq C_0 \Big( \int_0^T\!\! |h_n(s)-h(s)|_0^2 ds \Big)^{\frac{1}{2}}
\Big( \int_0^T \!\! \big( |u_n(s)-u_n(\bar{s}_N)|^2 + |u(s)-u(\bar{s}_N)|^2\big) ds
 \Big)^{\frac{1}{2}}
\nonumber\\
&\leq C_1 \; 2^{-\frac{N}{4}} \, ,\\
I_{n,N}^3 &\leq C_0 \Big(\int_0^T |u(s)-u(\bar{s}_N)|^2 ds \Big)^{\frac{1}{2}}
 \Big( \int_0^T |h_n(s)-h(s)|_0^2\, ds\Big)^{\frac{1}{2}} \leq C_3\;  2^{-\frac{N}{4}}\, ,
\label{estim2}\\
I_{n,N}^4 &\leq C_0  \Big[ 1+ \Big( \sup_{0\leq t\leq T}  |u(t)|\Big)\,  \sup_{0\leq t\leq T}
\big( |u(t)|+|u_n(t)|\big)\Big]\,  2^{-\frac{N}{2}} \leq
C_4\;  2^{-\frac{N}{2}}\,  .
\label{estim3}
\end{align}
Furthermore, condition {\bf (C4)} implies that
\begin{equation} \label{eqHolderdet}
I_{n,N}^2 \leq C_0 2^{-N\gamma}\, \sup_{0\leq t\leq T}\big(|u(t)|+|u_n(t)|\big)
 \int_0^T (1+\|u(s)\|)  (|h(s)|_0+
|h_n(s)|_0)\, ds  \leq C_2\, 2^{-N \gamma}.
\end{equation}
For fixed $N$ and $k=1, \cdots, 2^N$,
 as $n\to \infty$, the weak convergence of $h_n$ to $h$ implies
that of $\int_{t_{k-1}}^{t_k} (h_n(s)-h(s))ds$ to 0 weakly in $H_0$.
 Since $\s(u(t_k))$ is a compact operator,
we deduce that for fixed $k$ the sequence $\s(u(t_k)) \int_{t_{k-1}}^{t_k} (h_n(s)-h(s))ds$
converges to 0 strongly in $H$ as $n\to \infty$.
Since $\sup_{n,k}   |U_n(t_k)|\leq 2 \sqrt{\bar{C}}$, we have
 $\lim_n I_{n,N}^5=0$. Thus
 \eqref{errorbound}--% and \eqref{estim1}--
\eqref{eqHolderdet} yield  for every integer $N\geq 1$
%we have
\[
\limsup_{n \to \infty}\left\{  \sup_{ t\leq T} |U_n(t)|^2 +\int_0^T \|U_n(t)\|^2\, dt
\right\}\le C 2^{-N (\gamma \wedge \frac{1}{4})}. %,\quad N=1,2,\ldots
\]
Since $N$ is arbitrary, we deduce that $\|U_n\|_X \to 0$ as $n\to \infty$.
This shows that every sequence in $K_M$ has a convergent
subsequence. Hence $K_M$ is  a sequentially relatively compact subset of $X$.
Finally, let $\{u_n\}$ be a sequence of elements of $K_M$ which converges to $v$ in $X$.
 The above argument
shows that there exists a subsequence $\{u_{n_k}, k\geq 1\}$
which converges to some element $u_h\in K_M$
for the same topology of $X$.
 Hence $v=u_h$,
$K_M$ is a closed subset of $X$, and this completes the proof of the proposition.
\end{proof}

\noindent {\bf Proof of Theorem \ref{PGDue}:}
 Propositions \ref{compact} and  \ref{weakconv} imply that the family
$\{u^\e\}$ satisfies the Laplace principle,  which is equivalent
to the large deviation principle, in $X$ with the  rate function defined by \eqref{ratefc};
 see Theorem 4.4 in \cite{BD00} or
Theorem 5 in \cite{BD07}. This  concludes the proof of Theorem \ref{PGDue}.
\hfill $\Box$

\section{Appendix : Proof of the Well posedness and apriori bounds}\label{App}

The aim of this section is to prove Theorem \ref{th3.1}. We at first suppose that conditions (C1)-(C3) are satisfied.
Let
$F :[0,T]\times  V\to V'$ be defined by
 \[  F(t,u)= -A u -B(u,u) -\tilde R( t,u)\; , \quad \forall t\in [0,T],\; \forall u\in V.\]
To lighten notations,  we suppress the dependence of $\s$, $\tilde{\s}$,  $\tilde R$  and $F$
on $t$.
The inequality \eqref{diffB1} implies that any $\eta >0$ there exists $C_\eta >0$ such that for $u,v\in V$,
\begin{equation} \label{diffF}
\langle F(u)-F(v)\, ,\, u-v\rangle \leq -(1-\eta) \|u-v\|^2 + \big( R_1 + C_\eta \|v\|_{\HH}^4\big) \, |u-v|^2.
\end{equation}
Let $\{\varphi_n \}_{n\geq1}$ be an orthonormal basis of the
Hilbert space $H$ such that    $\varphi_n\in Dom(A)$.
  For any $n\geq 1$, let  $ H_n = span(\varphi_1, \cdots, \varphi_n) \subset
Dom(A)$    and $P_n: H \to H_n$ denote the orthogonal projection from  $H$
onto $H_n$,  and finally  let  $\s_n=P_n\s$ and   $\tilde{\s}_n=P_n \tilde{\s}$.
Since $P_n$ is a contraction of $H$, from \eqref{LQ-norm} we deduce that
$|\s_n(u)|_{L_Q}^2 \leq |\s(u)|_{L_Q}^2$.
% and
%$|\tilde{\s}_n(u)|_{L_Q}^2 \leq | \tilde{\s}(u)|_{L_Q}^2$.
\par
 For   $h \in \mathcal{A}_M$,
consider the following stochastic ordinary differential equation
on the $n$-dimensional space $H_n$  defined  by  $u_{n,h}(0)=P_n \xi$,  and for
 $v \in H_n$:
\begin{eqnarray} \label{unh}
d(u_{n,h}(t), v)=\big[ \langle F(u_{n,h}(t)),v\rangle +(\tilde{\s}(u_{n,h}(t))h(t),
v) \big]dt +  (\s (u_{n,h}(t)) dW_n(t), v),
\end{eqnarray}
where $W_n(t)$ is defined in \eqref{W-n}.
 Then for $k=1, \, \cdots, \, n$ we have
\begin{eqnarray*}
d(u_{n,h}(t), \varphi_k)&=&\big[ \langle F(u_{n,h}(t)),\varphi_k \rangle
+(\tilde{\s}(u_{n,h}(t))h(t), \varphi_k) \big]dt\\
 && + \sum_{j=1}^n q_j^{\frac{1}{2}}  \big( \s (u_{n,h}(t))e_j\, ,\, \varphi_k \big)  d\beta_j(t).
\end{eqnarray*}
Note that  for $v\in H_n$   the map
$u   \in H_n  \mapsto \langle Au +\tilde R(u) \, ,\, v\rangle $ is
globally Lipschitz uniformly in $t$, while using   \eqref{boundB1}  we deduce that
  the map $  u \in H_n \mapsto \langle B(u)\, ,\, v\rangle  $ is
locally Lipschitz.
 Furthermore, since there exists some constant $C(n)$
 such that $\|v\| \leq C(n) |v|$ for $v\in H_n$,
 Conditions ({\bf C1}) and ({\bf C2}) imply that the
map  $u\in H_n  \mapsto \big( (\s_n(u) e_j\, ,\, \varphi_k) : 1\leq
j,k\leq n \big)$,  respectively
  $u\in H_n  \mapsto \big( (\tilde{\s}_n(u) h(t)\, ,\, \varphi_k) : 1\leq k\leq n \big) $,
is   globally Lipschitz from $H_n$ to $n\times n$ matrices,
respectively  to $\RR^n$ uniformly in $t$.
 Hence by a well-known result about existence and
uniqueness of solutions to
stochastic  differential equations  \cite{Kunita}, there
exists a maximal solution  $u_{n,h}=\sum_{k=1}^n (u_{n,h}\, ,\, \varphi_k\big)\, \varphi_k$ to \eqref{unh},
i.e., a stopping time
$\tau_{n,h}\leq T$ such that \eqref{unh} holds for $t<
\tau_{n,h}$ and as $t \uparrow \tau_{n,h}<T$,
$|u_{n,h}(t)| \to \infty$.
\par
The following proposition shows that  $\tau_{n,h}=T$   a.s. It  gives
estimates on $u_{n,h}$ depending only on
$T$, $M$, $K_i$, $L_i$ and  $\EX |\xi|^{2p}$,  which are valid for all $n$ and all
$K_2 \in  [0,  \bar{K}_2]$ for some $\bar{K}_2>0$.
Its proof depends on the following version of Gronwall's lemma (see \cite{DM}, Lemma 3.9  for the proof).

\begin{lemma} \label{lemGronwall}
Let $X$, $Y$,  $I$ and  $\varphi$ be  non-negative processes and
 $Z$ be a non-negative integrable random variable. Assume that
$I$ is non-decreasing and there exist non-negative constants
$C$, $\alpha, \beta, \gamma, \delta$ with the following  properties
\begin{equation} \label{Grw-cond}
\int_0^T \varphi(s)\, ds \leq C\; a.s.,\quad 2\beta e^C\le 1,\quad 2\delta e^C\le \alpha,
\end{equation}
  and  such that
 for $0\leq t\leq T$,
\begin{eqnarray*} %\label{XYZ}
X(t)+ \alpha Y(t) & \leq & Z + \int_0^t \varphi(r)\, X(r)\,  dr + I(t),\;
\mbox{\rm a.s.},\\
% \label{borneI}
\EX(I(t)) &\leq & \beta\, \EX (X(t)) + \gamma  \int_0^t \EX (X(s))\, ds
 + \delta\, \EX (Y(t))  + \tilde{C},
\end{eqnarray*}
where  $\tilde{C}>0$ is a constant.
If $ X \in L^\infty([0,T] \times \Omega)$, then
we have
\begin{equation} \label{Gronwall}
\EX \big[ X(t) + \alpha Y(t)\big]  \leq 2 \,  \exp\big( {C+2 t  \gamma  e^C }\big)\,
 \big( \EX(Z) +\tilde{C} \big), \quad t\in [0,T].
\end{equation}
\end{lemma}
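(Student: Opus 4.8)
The plan is to exploit the two hypotheses in the order in which they are stated: the pathwise inequality is handled first, for fixed $\omega$, by an ordinary (deterministic) Gronwall argument, and only afterwards does one pass to expectations and invoke the bound on $\EX(I(t))$. The advantage of treating the sample-path inequality first is that the stochastic ``bad term'' $I(t)$ then enters merely as a non-decreasing forcing, which combines optimally with the integral form of the classical Gronwall lemma and keeps the constants clean.

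First I would set $W(t):=X(t)+\alpha Y(t)$ and observe that, because $\alpha Y\ge 0$, one has $X(r)\le W(r)$, so the pathwise hypothesis yields
\[
W(t)\le \big(Z+I(t)\big)+\int_0^t \varphi(r)\, W(r)\, dr .
\]
Since $I$ is non-decreasing and $Z$ does not depend on time, the forcing $r\mapsto Z+I(r)$ is non-decreasing, and the integral form of Gronwall's lemma then gives
\[
W(t)\le \big(Z+I(t)\big)\,\exp\Big(\int_0^t\varphi(r)\,dr\Big)\le \big(Z+I(t)\big)\,e^{C}\quad\text{a.s.},
\]
using $\int_0^T\varphi\le C$. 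Here the hypothesis $X\in L^\infty([0,T]\times\Omega)$ is exactly what makes this step rigorous: it guarantees that $W$ is a.s. finite on $[0,T]$, so both the Gronwall manipulation and the later subtraction of finite quantities are legitimate.

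Next I would take expectations in the displayed bound and insert the second hypothesis for $\EX(I(t))$, obtaining
\[
\EX\big(W(t)\big)\le e^{C}\EX(Z)+e^{C}\tilde C+e^{C}\beta\,\EX\big(X(t)\big)+e^{C}\gamma\int_0^t\EX\big(X(s)\big)\,ds+e^{C}\delta\,\EX\big(Y(t)\big).
\]
This is precisely where the two structural constraints enter: $2\beta e^{C}\le 1$ gives $e^{C}\beta\,\EX(X(t))\le\frac12\EX(X(t))$, while $2\delta e^{C}\le\alpha$ gives $e^{C}\delta\,\EX(Y(t))\le\frac12\,\alpha\EX(Y(t))$, so that exactly one half of $\EX(W(t))=\EX(X(t))+\alpha\EX(Y(t))$ can be moved to the left-hand side. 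After this absorption, and using $\EX(X(s))\le\EX(W(s))$, one is left with
\[
\EX\big(W(t)\big)\le 2e^{C}\big(\EX(Z)+\tilde C\big)+2e^{C}\gamma\int_0^t\EX\big(W(s)\big)\,ds .
\]
Since $t\mapsto\EX(W(t))$ is now a deterministic non-negative function satisfying a linear integral inequality with constant forcing, the classical Gronwall lemma gives $\EX(W(t))\le 2e^{C}(\EX(Z)+\tilde C)\exp(2\gamma e^{C}t)$, which is exactly \eqref{Gronwall}.

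The main obstacle is the bookkeeping of this second step rather than any deep analytic difficulty: one must arrange the absorption so that the residual coefficient of $\EX(W(t))$ is strictly $\tfrac12$ (and not $1$), and this is possible only because the hypotheses balance $\beta$ and $\delta$ \emph{separately} against $e^{-C}$ and $\alpha e^{-C}$. A naive single-step Gronwall applied to $X$ alone, followed by a crude bound of $\int_0^t\varphi X\,dr$ by $\int_0^t\varphi(Z+I)\,dr$, would reintroduce a spurious factor $Ce^{C}$ and fail to reproduce the clean exponent $C+2t\gamma e^{C}$; the point of working with $W=X+\alpha Y$ from the outset is to keep that factor equal to $e^{C}$. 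Finally, the boundedness of $X$ has to be carried through so that every expectation appearing in the absorption is finite.
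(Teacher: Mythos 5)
Your overall strategy is the intended one: the paper itself does not prove this lemma but refers to Lemma 3.9 of \cite{DM}, and the argument there is the same two-step scheme you use — a pathwise Gronwall bound $X(t)+\alpha Y(t)\le (Z+I(t))e^{C}$, then expectations, absorption of $\beta\,\EX X(t)$ and $\delta\,\EX Y(t)$ into one half of the left-hand side via $2\beta e^{C}\le 1$ and $2\delta e^{C}\le\alpha$, and a final deterministic Gronwall in $\gamma$, which reproduces the constant $2\exp(C+2t\gamma e^{C})$ exactly. Your absorption bookkeeping and the concluding Gronwall step are correct.

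The one genuine flaw is your justification of the pathwise step. You apply the integral form of Gronwall's lemma to $W=X+\alpha Y$ after enlarging $\int_0^t\varphi(r)X(r)\,dr$ to $\int_0^t\varphi(r)W(r)\,dr$, and you assert that $X\in L^\infty([0,T]\times\Omega)$ ``is exactly what makes this step rigorous'' because it makes $W$ a.s.\ finite. Pointwise finiteness of $W$ is not enough: the integral Gronwall lemma requires $\int_0^t\varphi(r)W(r)\,dr<\infty$, hence the pathwise time-integrability of $\varphi Y$, and the hypotheses say nothing about $Y$ beyond non-negativity — $Y$ appears only on the left of the pathwise inequality, and if $\int_0^t\varphi(r)Y(r)\,dr=+\infty$ your enlarged inequality is vacuous and the Gronwall iteration does not close. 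The repair keeps the clean constant: apply Gronwall to $X$ alone (legitimate, since $X$ is bounded and $\int_0^T\varphi\le C$, so $\int_0^t\varphi X<\infty$), obtaining $X(r)\le (Z+I(r))\,e^{\int_0^r\varphi(s)\,ds}$, and plug this back into the \emph{original} inequality, using that $I$ is non-decreasing:
\[
\int_0^t \varphi(r)\,\big(Z+I(r)\big)\,e^{\int_0^r\varphi(s)\,ds}\,dr
\;\le\; \big(Z+I(t)\big)\Big(e^{\int_0^t\varphi(s)\,ds}-1\Big),
\]
whence $X(t)+\alpha Y(t)\le (Z+I(t))\,e^{\int_0^t\varphi}\le (Z+I(t))\,e^{C}$ with no integrability of $Y$ needed. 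This also corrects your closing remark: the single-variable route loses a factor $Ce^{C}$ only if one bounds $\int_0^t\varphi X$ crudely by $\sup X\int\varphi$; keeping the exponential weight inside the integral yields exactly $e^{C}$. From that point on your argument goes through verbatim, modulo the tacit finiteness of $\EX Y(t)$ — needed in \emph{any} proof for the absorption to be a legitimate subtraction, implicitly assumed in the statement, and satisfied in the paper's applications.
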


The following proposition provides the (global) existence and uniqueness
of approximate solutions and also their uniform (a priori) estimates.
This is the main preliminary step in the proof of Theorem~\ref{th3.1}.
As in Lemma \ref{timeincrement} we can
made this step under less restrictive  growth
conditions concerning $\tilde\s$ than \eqref{tilde-s-b} in ({\bf C3}).
\begin{prop} \label{Galerkin}  Fix $M>0$, $T>0$ and let Conditions ({\bf C1})--({\bf C3})
 be in force with the assumption \eqref{tilde-s-bv}
%\begin{align} |\tilde{\s}(t,u)|^2_{L(H_0,H)} \leq \tilde{K}_0 + \tilde{K}_1 |u|^2 +
%\tilde{K}_2 \|u\|^2, & \quad \forall t\in [0,T], \;
% \forall u\in V, \label{tilde-s-bv}
%\end{align}
instead of \eqref{tilde-s-b}.
For any integer $p\geq 1$  there exists
$\bar{K}_{2} = \bar{K}_{2}(p,T,M)$
(which also depends on $K_i$, $\tilde{K}_i$ and $R_i$,  $i=0,1$, and on
$\tilde{K}_2$ if $\tilde{K}_2\neq K_2$)
such that  the
following result holds.
 Let $h\in \mathcal{A}_M$, $0\leq K_2\leq  \bar{K}_{2}$ and $\xi \in L^{2p}(\Om, H)$.
 Then equation \eqref{unh} has a unique
solution on $[0,T]$ (i.e.\ $\tau_{n,h}=T$ a.s.) with a modification  $u_{n,h} \in C([0, T], H_n)$ and
satisfying
\begin{align} \label{Galerkin1}
\sup_n \EX \,\Big(\, & \sup_{0\leq t\leq T}|u_{n,h}(t)|^{2p}  +
\int_0^T
\|u_{n,h}(s)\|^2 \, |u_{n,h}(s)|^{2(p-1)} ds \, \Big)
 \leq  C \big( \EX|\xi|^{2p} +1\big)
\end{align}
for some positive constant $C$ (depending on
 $p,K_i,\tilde{K}_i, i=0,1,2,  R_j, j=0,1, T,M$).
\end{prop}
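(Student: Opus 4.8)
The plan is to obtain the uniform bound \eqref{Galerkin1} by applying It\^o's formula to $|u_{n,h}(t)|^{2p}$ and feeding the resulting inequality into the two-step Gronwall Lemma~\ref{lemGronwall}; the nonexplosion $\tau_{n,h}=T$ will then come out as a byproduct of the finiteness of the left-hand side. Since the local solution may a priori blow up, I would first localize: for each integer $N$ set $\tau_N=\inf\{t\le\tau_{n,h}:|u_{n,h}(t)|\ge N\}$, so that on $[0,\tau_N]$ the process is bounded, the stochastic integrals are genuine mean-zero martingales, and the running supremum $X(t):=\sup_{s\le t\wedge\tau_N}|u_{n,h}(s)|^{2p}$ lies in $L^\infty([0,T]\times\Om)$, exactly as required by Lemma~\ref{lemGronwall}.

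The core computation is It\^o's formula, first for $|u_{n,h}|^2$ and then for the map $x\mapsto x^p$. The antisymmetry \eqref{as} gives $\langle B(u_{n,h}),u_{n,h}\rangle=0$, so the nonlinear term disappears and the coercive contribution $-2p\int_0^t|u_{n,h}|^{2(p-1)}\|u_{n,h}\|^2\,ds$ survives on the left. On the right I would estimate, using $|\sigma_n|_{L_Q}\le|\sigma|_{L_Q}$ with ({\bf C2})({\bf i}), the bound \eqref{tilde-s-bv} on $\tilde\sigma$, and ({\bf C3})({\bf ii}) on $\tilde R$: the term $\langle\tilde R(u_{n,h}),u_{n,h}\rangle$ and the finite-variation noise terms generate factors $\|u_{n,h}\|^2$ carrying the constant $K_2$ (once from the first-order and once, with weight $2(p-1)$, from the second-order It\^o term) and $\tilde K_2$ (from the control term, after Young's inequality). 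Each such factor is partly absorbed into the coercive integral and partly turned into a term $\varphi(s)\,|u_{n,h}(s)|^{2p}$, where $\varphi(s)=c_1+c_2|h(s)|_0+c_3|h(s)|_0^2$ satisfies $\int_0^T\varphi\,ds\le C$ a.s.\ because $h\in\mathcal A_M$. What remains on the left is $\alpha\,Y(t)$ with $Y(t)=\int_0^t|u_{n,h}|^{2(p-1)}\|u_{n,h}\|^2\,ds$ and $\alpha=2p-p(2p-1)K_2-\eta$, $\eta$ being the Young parameter used in the absorption.

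It then remains to control the martingale $m(t)=2p\int_0^t|u_{n,h}|^{2(p-1)}(u_{n,h},\sigma_n(u_{n,h})\,dW_n)$. By Burkholder--Davis--Gundy, $\EX\sup_{s\le t}|m(s)|\le C\,\EX\big(\int_0^t|u_{n,h}|^{4p-2}|\sigma_n|_{L_Q}^2\,ds\big)^{1/2}$; factoring $\sup_{s\le t}|u_{n,h}|^{2p}=X(t)$ out of the integral and invoking ({\bf C2})({\bf i}), then applying Young's inequality with a large parameter $\rho$, splits this bound into a piece $\beta\,\EX X(t)$ with $\beta$ proportional to $1/\rho$, a piece $\tfrac{\rho}{2}K_2\,\EX Y(t)=:\delta\,\EX Y(t)$, and lower-order pieces feeding $\gamma\int_0^t\EX X\,ds+\tilde C$. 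This puts the two inequalities in precisely the form demanded by Lemma~\ref{lemGronwall}.

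The delicate point, and the main obstacle, is to meet the two structural constraints $2\beta e^{C}\le1$ and $2\delta e^{C}\le\alpha$ of the lemma simultaneously: enlarging $\rho$ shrinks $\beta$ but inflates $\delta\sim\rho K_2$, so both can hold only when $K_2$ is small enough relative to $p$, $T$ and $M$ (the latter two entering through $C$ and $\gamma$). This balancing is exactly what forces the threshold $\bar K_2=\bar K_2(p,T,M)$ and accounts for its dependence on $p$ via the factor $p(2p-1)$ in $\alpha$. Once $K_2\le\bar K_2$, Lemma~\ref{lemGronwall} yields $\EX[X(t)+\alpha Y(t)]\le C(\EX|\xi|^{2p}+1)$ uniformly in $N$ and $n$ (using $|P_n\xi|\le|\xi|$). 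Markov's inequality then gives $\PX(\tau_N<T)\le C N^{-2p}\to0$, so $\tau_{n,h}=T$ a.s.\ with a continuous $H_n$-valued modification, and letting $N\to\infty$ by Fatou's lemma promotes the localized bound to \eqref{Galerkin1}.
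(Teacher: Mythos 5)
Your proposal is correct and follows essentially the same route as the paper's own proof: localization at $\tau_N$, It\^o's formula for $|u_{n,h}|^{2p}$ with the antisymmetry \eqref{as} removing the bilinear term, Young-type absorption of the $K_2$ and $\tilde K_2$ contributions (with the same aggregate coefficient $p(2p-1)K_2$ from the It\^o correction terms), a Burkholder--Davis--Gundy estimate split into $\beta\,\EX X+\delta\,\EX Y$ plus lower-order terms, and Lemma~\ref{lemGronwall} with the constraints $2\beta e^C\le 1$, $2\delta e^C\le\alpha$ forcing the threshold $\bar K_2(p,T,M)$, followed by $N\to\infty$ to get nonexplosion and \eqref{Galerkin1}. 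You even correctly identify the balancing between $\beta$ and $\delta$ that the paper leaves implicit, so there is nothing to fix.
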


\begin{proof}
Let $u_{n,h}(t)$ be the approximate maximal solution to \eqref{unh} described above.
 For every $N>0$, set
\begin{equation*}% \label{TauN}
\tau_N = \inf\{t:\; |u_{n,h}(t)| \geq N \}\wedge T.
\end{equation*}
Let $\Pi_n : H_0\to H_0$ denote the projection
operator defined by
$\Pi_n u =\sum_{k=1}^n \big( u \, ,\, e_k\big) \, e_k$, where
 $\{e_k, k\geq 1\}$ is the orthonormal basis of $H$  made by
 eigen-elements of the covariance operator
 $Q$ and used in  \eqref{W-n}.
\par
 It\^o's
 formula and the antisymmetry relation in \eqref{as}
yield that for $t \in [0, T]$,
\begin{align} \label{Galerkin3}
& |u_{n,h}(t\wedge \tau_N)|^2 = |P_n \xi|^2
+ 2 \int_0^{t\wedge \tau_N}\!\!
 \big( \sigma_n(u_{n,h}(s))  dW_n(s) , u_{n,h}(s)\big)
-2\int_0^{t\wedge \tau_N}\!\! \|u_{n,h}(s)\|^2 ds
 \nonumber \\
& \; -2\int_0^{t\wedge \tau_N}\!\! \big(\tilde{ R}  (u_{n,h}(s))-
 \tilde{\sigma}_n(u_{n,h}(s)) h(s), u_{n,h}(s)\big) \, ds
 +  \int_0^{t\wedge \tau_N} \!\! |\sigma_n(u_{n,h}(s))\, \Pi_n|_{L_Q}^2\, ds.
\end{align}
Apply again It\^o's formula for $f(z)=z^{p}$  when  $p\geq 2$ and
$z=|u_{n,h}(t\wedge \tau_N)|^2$. This yields for $t \in [0, T]$, and
any integer $p\geq 1$ (using the convention $p(p-1) x^{p-2}=0$ if
$p=1$)
\begin{equation} \label{estimate1}
|u_{n,h}(t\wedge \tau_N)|^{2p}  + 2p \int_0^{t\wedge \tau_N} \!\! |u_{n,h}(r)|^{2(p-1)} \,
   \|u_{n,h}(r)\|^2 \, dr
\leq \;   |P_n\xi|^{2p}  + \sum_{1\leq j\leq 5} {T}_j(t),
\end{equation}
where
\begin{eqnarray*}
{T}_1(t) &= & 2p\,    \int_0^{t\wedge \tau_N} \left(R_0+R_1 |u_{n,h}(r)|\right)
|u_{n,h}(r)|^{2p-1} dr,   \\
{T}_2(t) &= & 2p\;  \Big| \int_0^{t\wedge
\tau_N}
\big(\s_n(u_{n,h}(r))\; dW_n(r),
u_{n,h}(r)\big )\; |u_{n,h}(r) |^{2(p-1)} \Big| , \\
{T}_3(t) &= & 2p \,  \int_0^{t\wedge \tau_N}
|(\tilde{\s}_n(u_{n,h}(r))\; h(r), u_{n,h}(r))| \; |u_{n,h}(r)|^{2(p-1)}
dr, \\
T_4(t) &= &  p\,  \int_0^{t\wedge \tau_N}
|\s_n(u_{n,h}(r))\; \Pi_n |^2_{L_Q} \; |u_{n,h}(r) |^{2(p-1)}  dr,  \\
{T}_5(t) &= & 2p (p-1)  \,  \int_0^{t\wedge \tau_N}
|\Pi_n\s_n^*(u_{n,h}(r))\;  u_{n,h}(r)|^2_{0}\, |u_{n,h}(r)|^{2(p-2)} dr.
\end{eqnarray*}
Since $h\in \mathcal{A}_M$, the Cauchy-Schwarz inequality and
 condition \eqref{tilde-s-bv}
  imply that
\begin{align*}
 {T}_3 (t)& \leq \; 2p  \;   \int_0^{t\wedge \tau_N}\!\!
\Big( \sqrt{\tilde{K}_0} +
\sqrt{\tilde{K}_1} \, |u_{n,h}(r)| +  \sqrt{\tilde{K}_2} \,
\|u_{n,h}(r)\| \Big) \,|h(r)|_0 \,|u_{n,h}(r)|^{2p-1} dr  \nonumber \\
&  \leq \;  \frac{ p}{2}  \,  \int_0^{t\wedge \tau_N}
\!\!\|u_{n,h}(r)\|^2
 \, |u_{n,h}(r)|^{2(p-1)}\, dr
+  2p\, \tilde{K}_2  \, \int_0^{t\wedge \tau_N} \!\! |h(r)|_0^2 \,
|u_{n,h}(r)|^{2p}\, dr
\nonumber \\
&\quad  + 2p\, \sqrt{\tilde{K}_1} \, \int_0^{t\wedge \tau_N}\!\! |h(r)|_0 \, |u_{n,h}(r)|^{2p} dr
 + 2p\, \sqrt{\tilde{K}_0}
\, \int_0^{t\wedge \tau_N} \!\! |h(r)|_0|u_{n,h}(r)|^{2p-1} dr.
\end{align*}
Therefore using the inequality $|u|^{2p-1}\le 1+ |u|^{2p}$
to bound the last term
 we obtain
\begin{align} \label{estimate5}
& {T}_3 (t) \leq  \;  \frac{p}{2} \,  \int_0^{t\wedge \tau_N} \!\!\|u_{n,h}(r)\|^2
 \, |u_{n,h}(r)|^{2(p-1)}\, dr +
  2p\, \sqrt{\tilde{K}_0}  \, \int_0^{t} \!\!|h(r)|_0 dr \nonumber \\
&\qquad +  2p\,   \, \int_0^{t\wedge \tau_N} \!\!
\left[\left(\sqrt{\tilde{K}_0}+
\sqrt{\tilde{K}_1}\right) |h(r)|_0+
 \tilde{K}_2|h(r)|_0^2 \right]  |u_{n,h}(r)|^{2p} dr.
\end{align}
Using condition ({\bf C2}), relation \eqref{LQ-norm} and also the fact that
\begin{equation*}%\label{sigma-n}
 \| \s (u)\|_{\cL(H_0,H)} = \| \s^*
(u)\|_{\cL(H,H_0)}\le |\s (u)|_{L_Q},
\end{equation*}
  we deduce that
\begin{align}% \label{estimate4}
 {T}_4(t) + {T}_5(t) & \leq  (2p^2 -p) \, K_2\,   \int_0^{t\wedge \tau_N}
 \|u_{n,h}(r)\|^2\, |u_{n,h}(r)|^{2(p-1)}\, dr \nonumber \\
& \quad + (2p^2-p) \,  \int_0^{t\wedge \tau_N} \!\! \Big( K_1 \, |u_{n,h}(r)|^{2p}
+  K_0  |u_{n,h}(r)|^{2(p-1)}\Big)\, dr \nonumber \\
& \leq  (2p^2 -p) \, K_2\,   \int_0^{t\wedge \tau_N}
 \|u_{n,h}(r)\|^2\, |u_{n,h}(r)|^{2(p-1)}\, dr
 \nonumber \\ & \quad
+ c_p \,  \int_0^{t\wedge \tau_N} \!\! \left[ K_0+ \left( K_0+K_1\right)
|u_{n,h}(r)|^{2p}\right]\, dr. \nonumber
\end{align}
Consequently \eqref{estimate1} for $K_2\le(4p-2)^{-1}$ yields
\begin{align} \label{estimate1-gal}
&|u_{n,h}(t\wedge \tau_N)|^{2p}  +
p \int_0^{t\wedge \tau_N} \!\! |u_{n,h}(r)|^{2(p-1)} \,
   \|u_{n,h}(r)\|^2 \, dr  \\
&\qquad \leq \;   |P_n\xi|^{2p}  + c_p \left[ (K_0+R_0)\, T+ \sqrt{\tilde K_0}
\int_0^T\!\!|h(r)|_0 dr \right]+
\int_0^{t\wedge \tau_N} \!\! \varphi (r)
|u_{n,h}(r)|^{2p}\, dr+I(t) \nonumber
\end{align}
for $t\in [0,T]$,
where $I(t)=\sup_{0\leq s\leq t}|T_2(s)|$ and
\[
\varphi (r)=c_p\left(R_0+ R_1 +K_0+ K_1 + \left[\sqrt{\tilde{K}_0}+
\sqrt{\tilde{K}_1}\right] |h(r)|_0+
 \tilde{K}_2|h(r)|_0^2 \right)
\]
for some constant $c_p>0$.
%%%%%%%%%%%%
% Reference for the BDG inequality with constant 2\sqrt{2} instead  of 3 in our context
% Revuz-Yor Exercise 4.17  page 160
%D. Revuz and M. Yor, Continuous Martingales
%and Brownian Motion, 3rd edition, Springer, 1999
%%%%%%%%%%%%%%%%%%%%
The Burkholder-Davies-Gundy inequality, ({\bf C2}) and Schwarz's inequality
yield   that for $t\in[0, T]$ and $\beta>0$,
\begin{align} \label{estimate3}
\EX I(t) =  &\EX \Big(\sup_{0\leq s\leq t}|T_2(s)|\Big) \; \leq \; 6p \,
\EX \Big\{ \int_0^{t\wedge \tau_N}
|u_{n,h}(r)|^{2(2p-1)} \; |\s_{n}(u_{n,h}(r))\; \Pi_n |^2_{L_Q}\;
 dr \Big\}^\frac12      \nonumber \\
\; \leq & \; \beta \;\EX  \Big(\sup_{0\leq s\leq t\wedge\tau_N}
|u_{n,h}(s)|^{2p}\Big)
+  \frac{9 p^2 K_2 }{\beta}   \;\EX
 \int_0^{t\wedge \tau_N} \| u_{n,h}(r)\|^2\,  |u_{n,h}(r)|^{2(p-1)} dr \nonumber \\
&\quad  + \frac{9 p^2 (K_0+K_1)}{\beta} \EX  \int_0^{t\wedge \tau_N}
|u_{n,h}(r)|^{2p} dr+   \frac{9 p^2 K_0}{\beta}\;T.
\end{align}
Thus we can apply Lemma~\ref{lemGronwall} for
\begin{equation}\label{XY}
X(t)= \sup_{0\leq s\leq t\wedge\tau_N} |u_{n,h}(s)|^{2p},\quad
Y(t)=  \int_0^{t\wedge \tau_N} \| u_{n,h}(r)\|^2\,  |u_{n,h}(r)|^{2(p-1)} dr.
\end{equation}
All inequalities for the parameters (see \eqref{Grw-cond}) can be achieved by choosing $K_2$ small enough.
Thus there exists $\bar{K}_2$ such
 that for $0\leq K_2 \leq \bar{K}_2$  we have
\[
\sup_n \EE\Big( \sup_{0\leq s\leq \tau_N} |u_{n,h}|^{2p} + \int_0^{\tau_N} \|u_{n,h}(s)\|\,
|u_{n,h}(s)|^{2(p-1)}\, ds \Big) \leq C(p)
\]
for all $n$ and $p$, where the constant $C(p)$ is independent of $n$.
\par
Now we are in position to conclude the proof of Proposition~\ref{Galerkin}.
As $N \to \infty$, $\tau_N \uparrow \tau_{n, h}$,  and on the set
$\{\tau_{n, h} < T  \}$, we have
 $  \sup_{0\leq s \leq   \tau_N}|u_{n,h}(s)| \to \infty$.
 Hence $\PX (\tau_{n, h} < T)=0$ and for almost all $\om$, for
 $N(\om)$ large enough,  $\tau_{N(\om)}(\om)=T$ and
$u_{n,h}(.)(\om) \in C([0, T], H_n)$.
By the Lebesgue monotone convergence theorem, we complete the
proof.
\end{proof}
\begin{remark}
{\rm If $\tilde K_2\le 2 -\delta$ for some $\delta>0$
then the bound $\bar{K}_2$ does not depend on $\tilde{K}_2$.
Indeed,
one may slightly change the proof by replacing the inequality
in \eqref{estimate5} by the following
\begin{align*}
 {T}_3 (t)& \leq
 \;  p\,  \tilde K_2 \,  \int_0^{t\wedge \tau_N} \!\!\|u_{n,h}(r)\|^2
 \, |u_{n,h}(r)|^{2(p-1)}\, dr
\\ &
 +  c_p\,   \, \int_0^{t\wedge \tau_N} \!\!\left[\sqrt{\tilde{K}_0} |h(r)|_0+
\left(\left[\sqrt{\tilde{K}_0}+
\sqrt{\tilde{K}_1}\right] |h(r)|_0+
 |h(r)|_0^2 \right)  |u_{n,h}(r)|^{2p}\right] dr.
\end{align*}
Therefore we can exclude the first term of right hand side  and obtain
relation \eqref{estimate1-gal} with coefficients independent of $\tilde{K}_2$.
}
\end{remark}

%\begin{remark}\label{re:h-determ}
%{\rm
%If the control (shift) function $h$ admits a deterministic bound $\psi(t)$ from $L^2(0,T)$,
%then we can improve
%Proposition~\ref{Galerkin} in the way that the constant
%$\bar{K}_2$  would only depend on $p$. The point is that in this case
%we do not need to apply a {\em two steps} procedure in the
%Gronwall type argument (see Lemma~\ref{lemGronwall}).
%Then  the  function $\varphi$ is deterministic. Therefore
%by taking first $\sup_{t}$ in \eqref{estimate1} and then taking expected
%values, we obtain that for $X$ and $Y$ given by
%\eqref{XY}:
%\[
%(1-\beta)\EE X(t)+ \big(\frac{3p}{2} -c_{p,\beta}K_2\big)\EE Y(t)\le
%\EE |P_n\xi|^{2p}  + c_{p,\beta}T+
%c_{p,\beta}\int_0^{t} \!\! \varphi (r)
%\EE X(r)\, dr
%\]
%for $t\in [0,T]$, for arbitrary $\beta>0$ and some positive constant $c_{p,\beta}$. Choosing $\beta=1/2$ and
%$K_2$ such that $3p/2 -c_{p,\beta}K_2>0$ after application of the standard Gronwall lemma,
%we obtain the desired result.
%}
%\end{remark}
%We now prove the main result of this section.
\par
\noindent \textbf{Proof of Theorem \ref{th3.1}: }\\
Let us at  first suppose  that condition (C3) is satisfied.
Let $\Om_T = [0, T]\times \Om$ be endowed with the product measure
$ds \otimes d\PX$ on $ \mathcal{B} ([0, T]) \otimes \mathcal{F}$.
Let $\bar{K}_2$ be defined by Proposition \ref{Galerkin} with $p=2$.
The inequalities \eqref{Galerkin1} and \eqref{interpol} imply that
for $K_2\in [0, \bar{K}_2]$ we have the following additional a priori estimate
\begin{equation} \label{boundHH}
  \sup_n \EX  \int_0^T \|u_{n,h}(s) \|^4_{\mathcal H} ds
\leq  C_{2}   (1+ \EX|\xi|^4).
\end{equation}
  The proof consists of several steps.
\medskip

\noindent \textbf{Step 1: } \;
The inequalities \eqref{Galerkin1}
and \eqref{boundHH} imply the existence of a subsequence of
$(u_{n,h})_{n\geq 0}$ (still denoted by the same notation),
 of  processes
 \[
 u_h \in {\mathcal X}:= L^2(\Om_T, V) \cap L^4(\Om_T,
{\mathcal H}) \cap L^4(\Omega , L^\infty([0, T], H)),
\]
  $F_h \in L^2(\Om_T, V')$ and
 $ S_h, \tilde{S}_h \in L^2(\Om_T, L_Q)$,  and finally  of random variables
 $\tilde{u}_h(T) \in
L^2(\Om, H)$,
for which the following  properties hold:
\\
(i) $u_{n,h}  \to u_h$ weakly in $L^2(\Om_T, V)$, \\
(ii) $u_{n,h}  \to u_h$ weakly in $L^4(\Om_T, {\mathcal H})$, \\
(iii)    $u_{n,h} $ is weak star converging to $ u_h$  in $L^4(\Omega, L^\infty([0, T],H))$, \\
(iv) $u_{n,h}(T)  \to
 \tilde{u}_h(T)
$ weakly in $L^2(\Om, H)$, \\
(v) $F(u_{n,h}) \to F_h$ weakly in $L^2(\Om_T, V')$,   \\
(vi)   $\s_n(u_{n,h}) \Pi_n \to S_h$   weakly in $L^2(\Om_T,L_Q)$,   \\
(vii) $\tilde{\s}_n(u_{n,h}) h \to \tilde{S}_h$
weakly in $L^{\frac{4}{3}}(\Omega_T,H)$.
 %in  the  $\sigma(L^1(\Om_T,H), L^\infty(\Om_T,H))$ topology
\par
Indeed, (i)-(iv) are straightforward consequences of
 Proposition \ref{Galerkin}, of  \eqref{boundHH}, and of
 uniqueness of the limit
of $\EX \int_0^T (u_{n,h}(t), v(t))dt$ for appropriate $v$.
Furthermore, given $v \in L^2(\Om_T, V)$, we have $Av\in L^2(\Om_T , V')$.
Since for $u,v\in L^2(\Om_T,V)$, $\EX \int_0^T  \langle Au(t)\,, \, v(t)\rangle\, dt =
\EX \int_0^T  \langle u(t)\, , \, Av(t)\rangle\, dt$,
\begin{equation} \label{eq3.14}
 \EX \int_0^T  \langle A u_{n,h}(t), v(t)\rangle \, dt
\to \;
\EX \int_0^T \langle A u_h(t)\, ,\, v(t)\rangle \,  dt .
\end{equation}
Using \eqref{Galerkin1} with $p=2$,   \eqref{boundB}, \eqref{boundHH}, condition {\bf (C3)},
the Poincar\'e and
  the Cauchy-Schwarz inequalities,  we  deduce
\begin{align*}
 \sup_n &\;   \EX  \int_0^T \big|
 \langle  B(u_{n,h}(t)), v(t) \rangle
 + (\tilde{R} (u_{n,h}(t)), v(t))\big|\,  dt   \\
&\leq \; C_1\,  \sup_n  \left\{ \EX \int_0^T
  \|u_{n,h}(t)\|^4_\HH  +   \EX \int_0^T|u_{n,h}(t)|^2 dt\right\}
+  \; C_2 \EX \int_0^T(1+ \|v(t)\|^2) dt
 \\
 &\leq \;  C_3 \left(1+ E|\xi|^4 +
 \EX \int_0^T  \|v(t)\|^2  dt\right)<+\infty .
\end{align*}
Hence $\{B(u_{n,h}(t)) + \tilde{R} (u_{n,h}(t))\,, \,  n\geq 1 \} $ has a subsequence
converging weakly in $L^2(\Om_T, V')$, which completes the proof of  (v).
\par
Since $\Pi_n$ contracts the  $|\cdot |$ norm,
({\bf C2}), \eqref{LQ-norm}
and \eqref{Galerkin1} for $p=2$ prove that
(vi) is a straightforward of the following
\[
\sup_n \EX\! \int_0^T\!\! |\s_n(u_{n,h}(t)) \Pi_n|^2_{L_Q} dt \leq K_0T +
\sup_n \EX  \!  \int_0^T \!\!  \left(K_1 |u_{n,h}(t)|^2 + K_2 \|u_{n,h}(t)\|^2\right) dt
<  \infty.
\]
Finally, using \eqref{tilde-s-b} in {\bf (C3)}, H\"older's inequality, (\ref{Galerkin1})
 with $p=2$ and \eqref{boundHH},
we deduce
\begin{align*}
\EX\int_0^T |\tilde{\s}_n( & u_{n,h}  (s)\, h(s)  |^{\frac{4}{3}} \, ds \leq
\EX\int_0^T \big[\sqrt{ \tilde{K}_0}  + \sqrt{ \tilde{K}_1} |u_{n,h}(s)|
+ \sqrt{ \tilde{K}_\HH} \|u_{n,h}\|_{\mathcal H}\big]^{\frac{4}{3}} \; |h(s)|_0^{\frac{4}{3}} \, ds \\
&\leq C_1 \Big(\EX\int_0^T |h(s)|_0^2\, ds \Big)^{\frac{2}{3}} \Big(\EX\int_0^T \big[1
+ |u_{n,h}(s)|^4 + \|u_{n,h}(s)\|_\HH^4\big]  ds \Big)^{\frac{1}{3}}\leq C(M,T)
\end{align*}
for every integer $n\geq 1$. This completes the proof of (vii).

\medskip

\noindent \textbf{Step 2: }
For $\delta >0$, let $f \in H^1(-\delta, T+\delta)$ be such that
 $\|f\|_\infty = 1$,
 $f(0)=1$ and for any integer $j \geq 1$ set
 $g_j(t)=f(t)\varphi_j$,
 where $\{\varphi_j \}_{j\geq 1}$
 is
the previously chosen orthonormal basis for $H$. The It\^o formula
implies that for any $j\geq 1$, and for $0 \leq t \leq T$,
\begin{eqnarray} \label{equality1}
  \big( u_{n,h}(T)\, ,\, g_j(T)\big) = \big( u_{n,h}(0)\, ,\,  g_j(0)\big)
 +\sum_{i=1}^4  I_{n ,j}^i,
\end{eqnarray}
where
\begin{eqnarray*}
I_{n ,j}^1  =  \int_0^T (u_{n,h}(s), \varphi_j) f'(s) ds,&&
I_{n ,j}^2  =   \int_0^T \big( \s_n(u_{n,h}(s)) \Pi_n dW(s), g_j(s)\big),  \\
I_{n ,j}^3  =  \int_0^T \langle F(u_{n,h}(s)), g_j(s)\rangle  ds , &&
I_{n ,j}^4  =  \int_0^T \big( \tilde{\s}_n(u_{n,h}(s)) h(s), g_j(s)\big) ds.
\end{eqnarray*}
Since $f' \in L^2([0, T])$ and  for every $X \in L^2(\Om)$,
  $(t, \om) \mapsto \varphi_j  X(\omega)\, f'(t)
 \in L^2(\Om_T, H)$,   (i)~above implies that
as $n \to \infty$, $I_{n ,j}^1 \to \int_0^T (u_{h}(s), \varphi_j)
f'(s) ds$ weakly in $L^2(\Om)$. Similarly, (v) implies that as
$n\to \infty$, $I_{n ,j}^3 \to \int_0^T \langle F_h(s), g_j(s)\rangle ds$
weakly in $L^2(\Om)$, while (vii) implies that $I_{n ,j}^4 \to \int_0^T \big(
\tilde{S}_h(s), g_j(s)\big) ds$ weakly in $L^{\frac{4}{3}}(\Omega)$.
\par
To prove the convergence of $I_{n ,j}^2$, as in \cite{Sundar} (see also \cite{DM}), let
$ \mathcal{P}_T$ denote the class of predictable processes in
$L^2(\Om_T, L_Q(H_0, H))$ with the inner product
\begin{eqnarray*}
 (G, J)_{\mathcal{P}_T} =\EX \int_0^T \big(G(s), J(s)\big)_{L_Q} ds=
 \EX \int_0^T trace (G(s)QJ(s)^*) ds.
\end{eqnarray*}
The map  $\mathcal{T}: \mathcal{P}_T \to  L^2(\Omega)$
 defined by $ \mathcal{T} (G)(t) =
 \int_0^T \big( G(s)  dW(s) , g_j(s)\big) $
 is linear and continuous  because of the It\^o isometry.
 Furthermore, (vi) shows that for every  $G \in \mathcal{P}_T$, as $n\to \infty$,
  $\big(\s_n(u_{n,h}) \Pi_n, G\big)_{\mathcal{P}_T} \to (S_h,  G)_{\mathcal{P}_T}$
weakly in $L^2(\Omega)$.
\par
Finally, as $n\to \infty$, $P_n\xi =u_{n,h}^\e(0) \to
\xi $ in $H$ and by (iv), $(u_{n,h}(T), g_j(T))$ converges to
$(\tilde{u}_h(T), g_j(T))$ weakly in $L^2(\Om)$. Therefore,
as $n\to \infty$, \eqref{equality1} leads to
\begin{align} \label{equality2}
  (\tilde{u}_{h}(T), \varphi_j)\, f(T)
 &= \; \big( \xi,\varphi_j\big)
 + \int_0^T \big(u_{h}(s), \varphi_j\big)
f'(s) ds   +  \int_0^T \big( S_h(s)dW(s), g_j(s) \big)  \nonumber \\
 &\qquad  + \int_0^T \langle F_h(s) , g_j(s)\rangle ds +\int_0^T \big(
 \tilde{S}_h(s) , g_j(s)\big)
ds.
\end{align}

For $\delta >0$, $k>\frac{1}{\delta}$, $t \in [0, T]$, let $f_k \in H^1(-\delta, T+\delta)$ be such
that
 $\|f_k\|_\infty =1$,
  $f_k=1$ on $(-\delta, t-\frac{1}{k})$ and $f_k=0$ on
$\big(t, T+\delta\big)$.
 Then $f_k \to 1_{(-\delta, t)}$ in
$L^2$, and $f'_k \to -\delta_t$ in the sense of distributions.
Hence as $k\to \infty$, \eqref{equality2} written with $f:=f_k$
yields
 \[
0= \big(\xi - {u}_{h}(t) ,\varphi_j\big)
  +   \int_0^t \big(S_h(s)dW(s), \varphi_j\big)
 +  \int_0^t  \langle F_h(s), \varphi_j\rangle ds
+\int_0^t \big(\tilde{S}_h(s) , \varphi_j\big) ds
\]
for almost all $t\in [0,T]$. This relation makes it possible to suppose
(after some modification) that ${u}_{h}(t)$ is weakly continuous in $H$
for almost all $\om\in\Om$. Now
note that $j$ is arbitrary and $\EX \int_0^T |S_h(s)|^2_{L_Q} ds <
\infty$;  we deduce that for $0\leq t \leq T$,
\begin{eqnarray} \label{equality3}
 u_{h} (t)  =   \xi
 +  \int_0^t  S_h(s)dW(s)  + \int_0^t F_h(s)  ds +\int_0^t  \tilde{S}_h(s)ds \in H.
\end{eqnarray}
Moreover  $\int_0^t F_h(s)  ds\in H $.
 Let $f=1_{(-\delta,
T+\delta)}$; using again  \eqref{equality2}  we obtain
\begin{eqnarray*}
 \tilde{u}_{h} (T)
  =   \xi
 +  \int_0^T  S_h(s)dW(s)  + \int_0^T F_h(s)  ds +\int_0^T \tilde{S}_h(s)ds.
\end{eqnarray*}
This equation and \eqref{equality3} yield that
 $\tilde{u}_h(T) = u_h(T)$ a.s.
\medskip
\par
\noindent \textbf{Step 3: }
In \eqref{equality3} we still have to prove that  $ds \otimes d\PX$ a.s. on $\Om_T$,  one has
\begin{eqnarray*}
 S_h(s)=\s(u_h(s)), \;     F_h(s)=F(u_h(s))\;\mbox{\rm and }\;
 \tilde{S}_h(s)=\tilde{\s}(u_h(s))\;h(s) .
\end{eqnarray*}
To establish these relations we use the same idea  as in \cite{Sundar}.
Let
\begin{eqnarray*}
v\in   {\mathcal X}=   L^4(\Om_T, \HH)\cap L^4\big( \Omega,
 L^\infty([0,T],H) \big) \cap L^2(\Omega_T,V)\;.
\end{eqnarray*}
Suppose that $L_2<2$ and let $0<\eta<\frac{2-L_2}{3}$;  for every $t \in [0, T]$,
set
\begin{eqnarray} \label{r}
 r (t)  = \int_0^t  \Big[\,2\,  R_1 +
2\, C_\eta \, \|v(s)\|^4_{\HH} + L_1  + 2\sqrt{\tilde{L}_1} |h(s)|_0
+  \frac{\tilde{L}_2}{\eta}  |h(s)|_0^2   \Big]\, ds,
\end{eqnarray}
where $C_\eta$  is a  function of $\eta$ such that \eqref{diffF} holds.
Then almost surely,  $0\le r(t) < \infty$ for all $t \in [0, T]$.
Moreover, we also have that
\begin{equation}\label{r-facts}
r\in L^1(\Omega,L^\infty(0;T)), \; e^{-r}\in L^\infty(\Omega_T),\;
 r'\in L^1(\Omega_T),\;
r'e^{-r}\in L^\infty(\Om, L^1((0,T)).
\end{equation}
Weak  convergence in (iv) and the property $P_n\xi\to\xi$ in $H$ imply that
\begin{equation}\label{l-lim}
\EX\big( |u_h(T)|^2\, e^{-r(T)}\big)-\EX |\xi|^2
\leq \liminf_n \left[\EX\big(|u_{n,h}(T)|^2\, e^{-r(T)}\big)-\EX |P_n\xi|^2\right].
\end{equation}
We now apply  It\^o's formula  to
$ |u(t)|^2\, e^{-r(t)}$ for $u=u_h$ and $u=u_{n,h}$.
This gives the relation
\[
\EX\big( |u(T)|^2\, e^{-r(T)}\big)-\EX |u(0)|^2=\EX\int_0^Te^{-r(s)}d\left\{|u(s)|^2\right\}
-\EX\int_0^T r'(s)e^{-r(s)}|u(s)|^2 ds,
\]
which can be justified due to \eqref{r-facts} and the property
$|u|^2\in L^1(\Om, L^\infty((0,T))$.
Using \eqref{equality3}, \eqref{unh}   and  letting $u = v  + (u-v)$
after simplification, from \eqref{l-lim} we obtain
\begin{align} \label{r1}
  \EX & \int_0^T \!\!  e^{-r(s)}\,  \big[ -r'(s)\big\{ \big|u_h(s)-v(s)\big|^2
+   2\big( u_h(s)-v(s)\, ,\, v(s)\big)\}
 + 2\langle F_h(s),u_h(s)\rangle \nonumber \\
&+ |S_h(s)|^2_{L_Q} + 2\big(\tilde{S}_h(s)\, ,\, u_h(s)\big)\big]\, ds
\leq \liminf_n  X_n,
\end{align}
where
\begin{align*}
& X_n=\EX\int_0^T e^{-r(s)}\big[  -r'(s) \big\{ \big|u_{n,h}(s)-v(s)\big|^2
+ 2\big( u_{n,h}(s)- v(s)\, ,\, v(s)\big) \big\}  \\
 & + 2\langle F(u_{n,h}(s)),u_{n,h}(s)\rangle
+ | \sigma_n (u_{n,h}(s))\Pi_n|^2_{L_Q} +
2\big(\tilde{\sigma}(u_{n,h}(s)) h(s)\, ,\, u_{n,h}(s)\big)\big]\,
ds .
\end{align*}
 The inequalities  in \eqref{diffF}, ({\bf C2}), ({\bf C3}),
and also   \eqref{r} and Schwarz's inequality
imply that
\begin{align} \label{r2}
Y_n &\; : = \; \EX \int_0^T e^{-r(s)}\big [-r'(s)
|u_{n,h}(s)- v(s)|^2    \nonumber \\ &
 +   2 \langle F(u_{n,h}(s))-F(v(s)), u_{n,h}(s)- v(s)\rangle
\;  +|\s_{n}(u_{n,h}(s))\; \Pi_n - \s_n(v(s)) \; \Pi_n|^2_{L_Q}
  \nonumber \\
&\;  + 2 \big(\big\{\tilde{\s}_{n}(u_{n,h}(s))- \tilde{\s}_n(v(s))\big\}\; h(s),
u_{n,h}(s)-v (s)\big) \Big] ds  \leq 0.
\end{align}
Furthermore, $X_n=Y_n + \sum_{i=1}^2 Z_n^i$, with
\begin{align*}
& Z_n^1 \, = \,   \EX  \int_0^T \!\!  e^{-r(s)} \Big[ -2  r'(s) (u_{n,h}(s))- v(s), v(s))
 + 2\langle F(u_{n,h}(s)), v(s) \rangle \\
\; &+ 2 \langle F(v(s)),u_{n,h}(s)\rangle
 -  2\langle F(v(s)), v(s)\rangle
+  2\big( \sigma_n(u_{n,h}(s)) \Pi_n \, , \, \sigma(v (s)\big)_{L_Q}
 \\ &
 \;  +  2\big( \tilde{\s}_{n}\big( u_{n,h}(s)\big)h(s), v(s)\big)
 +  2 \big( \tilde{\s} (v(s))\; h(s),
u_{n,h}(s)) -2 (P_n\tilde{\s}(v(s)) h(s), v(s) \big) \, \Big]\,  ds , \\
& Z_n^2  \, =  \EX  \int_0^T \!\! e^{-r(s)} \Big[
2\Big(\s_n(u_{n,h}(s)) \Pi_n, \big( [\s(v(s))
\Pi_n-\s(v(s))\big)\Big)_{L_Q}  -  |P_n \s(v(s)) \Pi_n|^2_{L_Q}\Big]
ds.
\end{align*}
The weak convergence properties (i)-(vii) imply that, as $n\to
\infty$,  $Z_n^1 \to Z^1$ where
\begin{align} \label{r3}
 Z^1 \,
 =   \, \EX & \int_0^T \!  e^{-r(s)} \big[ -2  r'(s) \big(u_{h}(s) - v(s), v(s)\big)
+  2\langle F_h(s),v(s)\rangle
+ 2 \langle F(v(s)), u_{h}(s)\rangle    \nonumber  \\
& - 2\langle F(v(s)), v(s)\rangle
 +2 \big( S_h(s)\, ,\, \sigma(v(s))\big)_{L_Q} +  2 (\tilde{S}_h(s), v(s))    \nonumber \\
& + 2 \big(\tilde{\s} (v(s))\; h(s), u_{h}(s)\big)
 -2 \big(\tilde{\s}(v(s)) h(s), v(s) \big)\, \big] ds.
\end{align}
As for  $Z_n^2$ we note that the Lebesgue dominated convergence theorem implies
 that
\[
\EX \int_0^T e^{-r(s)}   |\s(v(s)) (\Pi_n-Id_{H_0})|_{L_Q}^2ds \to 0~~\mbox{ as }~~
n\to \infty.
\]
Using once more the dominated Lebesgue convergence  theorem, we deduce that
\begin{equation} \label{Z2}
 Z_n^2 \to -  \EX \int_0^T e^{-r(s)} |\sigma(v(s))|_{L_Q}^2 ds~~\mbox{ as }~~
n\to \infty.
\end{equation}
Thus, \eqref{r1}-\eqref{Z2} imply that for  any $v \in{\mathcal X}$,
\begin{align} \label{r7}
&    \EX  \int_0^T e^{-r(s)}\Big\{ - r'(s) |u_h (s)- v(s)|^2
 + 2\langle F_h(s)-F(v(s)), u_h(s)-v(s)\rangle \nonumber \\
&\;   +  |S_h(s)-\s(v(s))|^2_{L_Q}
+ 2\Big( \tilde{S}_h(s)-\tilde{\s}(v(s)) h(s)\, ,\,  u_h(s)-v(s)\Big) \Big\} ds
\leq  0.
\end{align}
Let  $v=u_h \in {\mathcal X} $;
we conclude that $S_h(s) =\s(u_h (s)),$ $ ds\otimes d \PX$ a.e.
For  $\lambda\in \RR$, $\tilde{v} \in L^\infty(\Omega_T,V)$,
 set  $v_\lambda =u_h -\lambda
\tilde{v}$\, ; then it is clear that $v_\lambda\in {\mathcal X}$.
Applying  \eqref{r7}  to $v:=v_\lambda$ and
 neglecting $| \s(u_h(s)) - \s(v_\lambda(s))|^2_{L_Q}$, yields
\begin{align} \label{r8}
 \EX  \int_0^T e^{-r_\la(s)} & \Big[ -\lambda^2  r'_\la(s) |\tilde{v}(s)|^2
 + 2 \lambda  \Big\{\langle F_h(s)-F(v_\lambda(s)),
\tilde{v}(s)\rangle  \nonumber \\
&\quad
+ \Big(\tilde{S}_h(s)-\tilde{\s}(v_\lambda(s)) h(s), \tilde{v}(s)\Big) \Big\}
 \Big]\, ds \leq  0,
\end{align}
where $r_\la(s)$ is given by \eqref{r} with $v_\la$ instead of $v$.
Using ({\bf C3})  we obtain
\begin{align*}
\EX  & \int_0^T e^{-r_\la(s)}\big|\big( \big[\tilde{\s}(v_\lambda(s))- \tilde{\s}(u_h(s))]\,  h(s)\, ,\,
  \tilde{v}(s)\big)\big| ds \\
  & \leq
|\lambda | \, \EX  \int_0^T |h(s)|_{0} \,  |\tilde{v}(s)| \, \left(
\sqrt{\tilde{L}_1}  |\tilde{v}(s)| + \sqrt{\tilde{L}_2}\,
 \|\tilde{v}(s)\|   \right) \, ds  \to 0
\end{align*}
  as $\lambda \to 0$.
 Hence, by   the dominated convergence theorem,
%%%%%%%%%%%%% Modification \tilde{S}_h(s) - \tilde{\s}(...) h(s) here and below
\begin{eqnarray*}
\lim_{\lambda \to 0} \EX \int_0^T \!   e^{-r_\la(s)}\Big(
\tilde{S}_h(s)-\tilde{\s}(v_\lambda(s))
 h(s)\, , \,  \tilde{v}(s)\Big)  ds {}\qquad {}
 \\= \EX \int_0^T \!  e^{-r_0(s)}\Big( \tilde{S}_h(s)-\tilde{\s}(u_h(s))h(s)\, ,\,
 \tilde{v}(s)\Big)
 ds.
\end{eqnarray*}
Furthermore,     \eqref{diffF}   yields  for $\lambda \neq 0$ and $s\in [0,T]$
\begin{align*}
  \big|\langle  F(v_\lambda & (s)) -F(u_h(s)), \tilde{v}(s)\rangle\big| \leq
C \, |\lambda| \left[ |\tilde{v}(s)|^2 + \|\tilde{v}(s)\|^2
+ |\tilde{v}(s)|^2\,
  \| u_h(s)\|_\HH^4
\right].
\end{align*}
Thus  we deduce  as $\lambda \to 0$,
\begin{eqnarray*}
 \EX \int_0^T  e^{-r_\la(s)}\langle F_h(s)- F(v_\lambda(s)), \tilde{v}(s)\rangle
ds  \to \EX \int_0^T e^{-r_0(s)} \langle F_h(s)- F(u_h(s)),
\tilde{v}(s)\rangle ds.
\end{eqnarray*}
Thus, dividing \eqref{r8} by $\lambda>0$ (resp. $\lambda <0 $)
and letting $\lambda \to
0$ we obtain that for every
  $\tilde{v} \in   L^\infty(\Omega_T,V)$,
 which is a dense subset of $L^2(\Om_T, V)$,
\begin{eqnarray}% \label{r9}
 \EX \int_0^T  e^{-r_0(s)}\Big[  \big\langle
   F_h(s)- F(u_h(s))\, , \, \tilde{v}(s)\big\rangle
  + \big( \tilde{S}_h(s)-\tilde{\s}(u_h(s)) h(s)\, ,\,
\tilde{v}(s)\big) \Big] \,  ds =
 0. \nonumber
\end{eqnarray}
Hence a.e. for $t \in [0, T]$, \eqref{equality3} can be rewritten
as
\begin{eqnarray} \label{r9bis}
u_{h} (t)  =   \xi
 +  \int_0^t  \s(u_h(s))dW(s)  +
 \int_0^t \big[F(u_h(s)) + \tilde{\s}(u_h(s)) h(s) \big]ds.
\end{eqnarray}
 Furthermore, (i)-(iii)  %and \eqref{Galerkin1} for $p=2$
 imply that
\begin{eqnarray} %\label{boundgeneral1}
\EX\Big(  \int_0^T \|u_h(t)\|^2\, dt \Big)& \leq& \sup_n \EX
\int_0^T \|u_{n,h}(t)\|^2 dt \leq C
 \big( 1+E|\xi|^4\big),\nonumber \\
% \label{boundgeneral2}
\EX\Big( \sup_{0\leq t\leq T} |u_h(t)|^4 \big) &\leq & \sup_n \EX
\Big(\sup_{0\leq t\leq T} |u_{n,h}(t)|^4 \Big)  \leq    C\, \big(
1+E|\xi|^4\big),  \nonumber \\
\label{boundgeneral3} \EX\Big(  \int_0^T \|u_h(t)\|^4_\HH\, dt
\Big)& \leq& \sup_n \EX \int_0^T \|u_{n,h}(t)\|_{\mathcal H}^4 \, dt \leq
C  \big( 1+E|\xi|^4\big).
\end{eqnarray}
This completes the proof of
 \eqref{eq3.1}.
\medskip\par
\noindent \textbf{Step 4: }
Now we prove that $u_{h}\in C([0,T],H)$ almost surely.
We first note that \eqref{r9bis} yields that
 $e^{-\de A}u_{h}\in C([0,T],H)$ a.s. for any $\de>0$.
Indeed, since for $\delta >0$ the operator $e^{-\delta A}$ maps $H$ to $V$ and $V^\prime$ to $H$,
we deduce that  $e^{-\delta A}\int_0^. F(u_h(s))\, ds $ belongs to $C([0,T],H)$.
Condition ({\bf C3}) implies that
$e^{-\delta A}\int_0^. \tilde{\s}(u_h(s))\, h(s)\, ds$ also belongs to
$C([0,T],H)$.
 %\[
%e^{-\delta A}\int_0^. \tilde{\s}(u_h(s))\, h(s)\, ds \qquad\mbox{also belongs to}\quad
%C([0,T],H).
 %\]
Finally, condition ({\bf C2})  implies
$\EE \int_0^T |e^{-\delta A} \sigma(u_h)(s)|^2_{L_Q}\, ds <+\infty$. Thus
$\int_0^. e^{-\delta A}  \s(u_h(s))\, dW(s)$ belongs to $C([0,T],H)$ a.s. (see e.g. \cite{PZ92}, Theorem 4.12).
 Therefore it  is sufficient to prove that
\begin{equation}\label{conv-cont}
\lim_{\delta\to0} \EX\left\{ \sup_{0\leq t\leq T} |u_{h}(t)-e^{-\de A}u_{h}(t)|^2\right\}=0.
\end{equation}
Let $G_\de= Id-e^{-\de A}$ and apply It\^o's   formula to
$|G_\de u_{h}(t)|^2$. This yields
\begin{eqnarray} \label{ito-g-de}
 |G_\de u_{h}(t)|^2 & = &  |G_\de \xi|^2  -2\int_0^{t}\!\! \| G_\de u_{h}(s)\|^2 ds
+ 2  I(t)   +    \int_0^{t} \!\! | G_\de \sigma(u_{h}(s))|_{L_Q}^2\, ds
 \nonumber \\
&& \; +2\int_0^{t}\!\! \big\langle  B(u_h(s))+  \tilde R (u_{h}(s)) +
  \tilde{\sigma}(u_{h}(s)) h(s),  G^2_\de u_{h}(s)\big\rangle \, ds,
\end{eqnarray}
where $I(t)=  \int_0^{t} \big( G_\de\sigma(u_{h}(s))  dW(s),  G_\de u_{h}(s)\big)$.
By the Burkholder-Davies-Gundy and Schwarz inequalities we have
\begin{eqnarray*}
\EX\sup_{0\leq t\leq T} |I(t)| &\le&
C\EX\left( \int_0^{T}\!\! |G_\de u_{h}(s)|^2
 |G_\delta \, \sigma(u_{h}(s))|^2_{L_Q} ds\right)^{1/2}  \\
&\leq  &  \frac{1}{2}\;  \EX\sup_{0\leq t\leq T}  |G_\de u_{h}(t)|^2 + \frac{C^2}{2} \EX
\int_0^{T} \!\! | G_\de \sigma(u_{h}(s))|_{L_Q}^2\, ds .
 \end{eqnarray*}
Hence  for some constant $C$, \eqref{ito-g-de} yields
\begin{align*}
\EX\sup_{0\le t\le T } | & G_\de u_{h}(t)|^2 \le  2 \, |G_\de \xi|^2
+ C\, \EX \int_0^{T}\!\! | G_\de \sigma(u_{h}(s))|_{L_Q}^2 ds \\
& \; +\, 4\,  \EX \int_0^{T}\!\!  \left|\big\langle  B(u_h(s))+  \tilde R (u_{h}(s)) +
  \tilde{\sigma}(u_{h}(s)) h(s),  G^2_\de u_{h}(s)\big\rangle \right|  \, ds .
 % + | G_\de \sigma(u_{h}(s))|_{L_Q}^2\right]\, ds.
\end{align*}
Since for every $u\in H$, $|G_\de u| \to 0$   as $\delta\to 0$
and $\sup_{\delta>0} |G_\delta|_{L(H,H)}\leq 1$,   we deduce that if $\{\varphi_k\}$
denotes an orthonormal basis in $H$, then
$ | G_\de \sigma(u_{h}(s))Q^{1/2} \varphi_k|^2 \to 0$
for every $k$  and almost every $(\omega,t)\in \Omega\times [0,T]$.
% Therefore
%\[
%|G_\delta \sigma(u_h(s))|_{L_Q}^2 \equiv
% \sum_k  | G_\de \sigma(u_{h}(s))Q^{1/2} \varphi_k|^2\to 0~~\mbox{ as }~~ \delta\to 0
%\quad
%\]
%for almost all $(\om,s)\in \Omega\times [0,T]$.
Since
\[
{\displaystyle \sup_{\delta>0 }
|G_\delta \sigma(u_h)|_{L_Q}^2 \leq
  \sum_k  \sup_{\delta >0}  | G_\de \sigma(u_{h})Q^{1/2} \varphi_k|^2
\leq C |\sigma(u_h)|_{L_Q}^2\in L^1(\Omega\times [0,T])},
\]
the Lebesgue  dominated convergence theorem
 implies that $  \EX \int_0^{T}\!\! | G_\de \sigma(u_{h}(s))|_{L_Q}^2 ds \to 0$.
Furthermore, given  $u\in V$ we have  $\|G_\de^2 u\| \to 0$ as $\delta\to 0$
and $\sup_{\delta >0} |G_\delta|_{L(V,V)}\leq 2 $.  Hence
 $ \big\langle  B(u_h(s))+  \tilde R (u_{h}(s)) +
  \tilde{\sigma}(u_{h}(s)) h(s),  G^2_\de u_{h}(s)\big\rangle \to 0$ for almost every
  $(\omega,s)$.
Therefore, as above, the Lebesgue
  dominated convergence theorem concludes the proof of \eqref{conv-cont}.

\medskip\par
\noindent \textbf{Step 5: }
To complete the proof of Theorem \ref{th3.1}, we  show
 that
the solution  $u_h$ to \eqref{r9bis} is unique in $X:=C([0, T], H) \cap L^2([0,
T], V)$.
 Let $v \in X$ be another solution to \eqref{r9bis} and
 \[
\tau_N =\inf \{t \geq 0: |u_h(t) | \geq N \} \wedge
\inf \{t \geq 0: |v(t) | \geq N \} \wedge T.
\]
Since $|u_h(.) |$ and $|v(.)|$ are
a.s. bounded on $[0,T]$,  we have    $\tau_N \to T$ a.s. as $N\to \infty$.
\par
Let $U = u_h-v$. By  It\^o's formula we have
\begin{align}\label{ito-psi}
 e^{-a \int_0^{t\wedge \tau_N}   \|u_h(r)\|^4_\HH dr}
 |U(t\wedge \tau_N)|^2 \;\; =  \int_0^{t\wedge \tau_N}\Psi(s) ds +\Phi(t\wedge \tau_N),
\end{align}
where
\begin{align*}
  \Psi(s) &
 =
  e^{- a\int_0^{s}
 \|u_h(r)\|^4_\HH dr}  \Big[ -a \|u_h(s)\|^4_\HH
 |U(s)|^2
 \\
&\;-2 \|U(s)\|^2
  - 2 \langle B(u_h(s))- B(v(s)), U(s)\rangle
 + |\s(u_{h}(s))-\s(v(s))|^2_{L_Q} \\
&\;  +  2 \big( [\tilde{\s}(u_h(s))-\tilde{\s}(v(s))] h(s), U(s)  \big)
- 2 \big( \tilde R(u_h(s))-\tilde R(v(s))\, ,
U(s)\big)   \Big]
\end{align*}
and
\begin{align*}
  \Phi(\tau)
 =2\int_0^\tau
  e^{- a\int_0^{s}
 \|u_h(r)\|^4_\HH dr}  \big( U(s),  \left[\s(u_{h}(s))-\s(v(s))\right] dW(s)\big).
\end{align*}
Now we
 set $a=2 {C}_\eta$ where ${C}_\eta$ is defined by \eqref{diffB1}.
Then using \eqref{diffB1} and Conditions ({\bf C2}) and ({\bf C3})
we obtain  that for some non negative constant $C(\eta)$  which depends on $\eta$,
$R_1$, $L_1$, $\tilde{L}_i$, $i=1,2$,
and is independent of $L_2$,
\begin{align}\label{Psi}
 \Psi(s)
\leq &\;    e^{- a\int_0^{s}
\|u_h(r)\|^4_\HH dr} \Big[   -(2-3\eta-L_2) \|U(s)\|^2 \nonumber \\
&  +
\Big(2 R_1+  L_1  + \frac{\tilde{L}_2}{\eta}
  |h(s)|_{0}^2  + 2\,\sqrt{\tilde{L}_1}|h(s)|_0\Big) |U(s)|^2  \Big] \nonumber \\
\leq &\;    e^{- a\int_0^{s}
\|u_h(r)\|^4_\HH dr} \Big[   -(2-3\eta-L_2) \|U(s)\|^2   +C(\eta)
\left(1+
  |h(s)|_{0}^2 \right) |U(s)|^2  \Big].
\end{align}
%%%%%%%%%%%%%%
First consider the case of a general (random) control function $h$.
Below we use the notations
\[
X(t)=
\sup_{0\leq s\leq t}\left\{ e^{- a\int_0^{s\wedge \tau_N}
\|u_h(r)\|^4_\HH dr}
 |U(s\wedge \tau_N)|^2\right\}, ~~
Y(t)=
 \int_0^{t\wedge \tau_N}\!\! \! e^{- a\int_0^{s}
 \|u_h(r)\|^4_\HH dr}
 \|U(s)\|^2 ds.
\]
Then it follows from
\eqref{ito-psi} and \eqref{Psi} that for $3\eta < 2-L_2$,
\[
X(t)+ (2-3\eta-L_2) Y(t)\le C(\eta)
\int_0^t\left(1+ |h(s)|_{0}^2 \right) X(s) ds +I(t),
\]
where $I(t)=\sup_{0\leq s\leq t}\left|\Phi(s\wedge \tau_N)\right|$.
An argument similar to that used to prove \eqref{estimate3}, based on the
Burkholder-Davies-Gundy inequality, ({\bf C2}) and Schwarz's inequality,
yields    that for $t\in[0, T]$ and $\beta>0$,
\begin{align*}
\EX I(t) & \le 6\,  \EX\left[
\int_0^{t\wedge \tau_N}\!\!  e^{- 2a\int_0^{s}
 \|u_h(r)\|^4_\HH dr} |U(s)|^2  |\s(u_{h}(s))-\s(v(s))|^2_{L_Q} ds
 \right]^{1/2}
 \\
 & \le \beta \, \EX X(t)+\frac{9 L_1}{\beta}\int_0^t\EX X(s) ds +
 \frac{9 L_2}{\beta}\, \EX Y(t).
\end{align*}
%where $c_0$ and $c_1$ are some absolute constants.
%Here above we have applied the same argument as in the estimate in
%\eqref{estimate3}.
Now we are in position to apply Lemma~\ref{lemGronwall}.
If we choose $\eta=1/3$, $2\beta=\exp\{-C(1/3)(T+M)\}$, then \eqref{Grw-cond}
holds under the condition $L_2\left(1+36 \exp\{2C(1/3)(T+M)\}\right)\le 1$.
Therefore,
since   $\sup_{0\leq s\leq T}\left\{ e^{- a\int_0^{s\wedge \tau_N}
\|u_h(r)\|^4_\HH dr}
 |U(s\wedge \tau_N)|^2\right\}  \leq2N$, relation \eqref{Gronwall}
  implies  that $\EX X(t)=0$ for all $t$ and hence,
\begin{equation}\label{uniq-s}
  \EX\;  \sup_{0\leq s\leq T} \; \left\{ e^{- a\int_0^{s\wedge \tau_N}
 \|u_h(r)\|^4_\HH dr} \;  |U(s \wedge \tau_N)|^2\right\}  =0.
 \end{equation}
Since
 $\lim_{N\to \infty} \tau_N =T$
 a.s.,   and by \eqref{boundgeneral3}
we have a.s. $\int_0^T \|u_h(s)\|[_{\mathcal H}^4
ds <\infty$,
 we deduce that
 $|U(s, \om)| =0$ a.s. on $\Omega_T$. Thus, we conclude that
$u_h(t)=v(t)$, a.s., for every $t \in [0, T]$ which
yields  the uniqueness statement in Theorem~\ref{th3.1}
for a general control function.
\smallskip
\par
Suppose now that we only have  $L_2<2$ and that $h$ possesses a deterministic bound
$\psi(t)\in L^2(0,T)$;  let $\eta\in ]0, \frac{2-L_2}{3}]$.
Then it follows from
\eqref{ito-psi} and \eqref{Psi} that
\[
V_N(t)\le C(\eta)
\int_0^{t}\Big[ 1+ |\psi(s)|^2 \Big]
 V_N(s)   ds\quad
\mbox{with}\quad V_N(t)=  \EX
 e^{-a \int_0^{t\wedge \tau_N}   \|u_h(r)\|^4_\HH dr}
 |U(t\wedge \tau_N)|^2.
\]
Since the function $s \mapsto     |\psi|^2 $  belongs to
 $L^1(0,T)$, we can apply the Gronwall lemma  to obtain
 \eqref{uniq-s} and  to conclude the proof for the case considered.
\smallskip
\par
Finally suppose  that $\tilde{\sigma}=\sigma$ where $\sigma$ satisfies condition {\bf (C2)} with
  $L_2<2$.
For $h\neq 0$ set $\tilde{W}^h_t=W_t+\int_0^t h(s)\, ds$ and let $\tilde{\PP}$ be the probability
defined on $(\Omega,{\mathcal F_t})$ by
\[ \frac{d\tilde{\PP}}{d\PP}=\exp\Big( -\int_0^t h(s)\, dW_s
-\frac{1}{2} \int_0^t |h(s)|_0^2\, ds\Big).\]
 The Girsanov theorem implies that $\tilde{W}^h$
is a $\tilde{\PP}$ Brownian motion with the same covariance operator $Q$.
The above arguments prove that  under $\tilde{\PP}$,
the evolution equation
\[ u_h(t)=\xi + \int_0^t F(u_h(s))\, ds + \int_0^t \sigma(u_h(s))\, d\tilde{W}^h_s\]
has a unique solution in $X$.
Thus the Girsanov theorem implies that under  ${\PP}$, \eqref{uh} has a unique solution in $X$.
Finally, once well-posedness is proved, computations similar to that used
to obtain \eqref{Galerkin1} in the case $p=2$ can  be used to deduce
that \eqref{eq3.1} holds; this  completes the proof of Theorem~\ref{th3.1}.
\hfill $\Box$
\smallskip\par
Note that it follows from  the consideration above that  we only
need the requirement  \eqref{tilde-s-b}
concerning the growth of $\tilde\s$ in order to obtain
weak compactness of the sequence $ \tilde{\s}_{n}\big(u_{n,h}(s)\big)h(s)$
in $L^p(\Om_T; H)$ for $p=4/3>1$.
This weak compactness makes it possible to pass to the limit
in the term
$\EX  \int_0^T\big( \tilde{\s}_{n}\big(u_{n,h}(s)\big)h(s), v(s)\big) ds$
in the expression for $Z^1_n$ for elements $v$ from the class  ${\mathcal X}$
which contains the limiting function $u_h$.

%\begin{remark}
%{\rm The result similar to Theorem~\ref{th3.1} can be established if
%instead of the boundedness hypothesis \eqref{tilde-s-b} for $\tilde\s$ in Condition ({\bf C3})
%we assume that there exists $0<s<1/2$ such that
%\begin{align} |\tilde{\s}(t,u)|^2_{L(H_0,H)} \leq \tilde{K}_0 + \tilde{K}_1 |u|^2 +
%\tilde{K}_2 |A^{s}u|^2, & \quad \forall t\in [0,T], \;
% \forall u\in V. \label{tilde-s-bv-s}
%\end{align}
%Indeed,  using  Remark~\ref{re:as-s}, we deduce that
%this inequality implies \eqref{tilde-s-bv} with arbitrary small $\tilde{K}_2$
% and thus the conclusion
%of Proposition~\ref{Galerkin} is in force. Furthermore,
%it follows from  the consideration above that  we only
%need the requirement  \eqref{tilde-s-b}
%concerning the growth of $\tilde\s$ in order to obtain
%weak compactness of the sequence $ \tilde{\s}_{n}\big(u_{n,h}(s)\big)h(s)$
%in $L^p(\Om_T; H)$ for $p=4/3>1$.
%This weak compactness makes it possible to pass to the limit
%in the term
%$\EX  \int_0^T\big( \tilde{\s}_{n}\big(u_{n,h}(\tau)\big)h(\tau), v(\tau)\big) d\tau$
%in the expression for $Z^1_n$ for $v$ from the class  ${\mathcal X}$
%which contains the limiting function $u_h(t)$. However the same conclusion
%would be true for $v$ from an appropriate class (containing  $u_h(t)$),
%if we could  prove  uniform boundedness of
%$ \tilde{\s}_{n}\big(u_{n,h}(s)\big)h(s)$
%in $L^p(\Om_T; H)$ for {\it some} $p>1$ such that $u_h\in L^{p/(p-1)}(\Om_T; H)$.
%}
%\end{remark}

\vspace{.5cm}

%\newpage
\noindent {\bf Acknowledgements.} This work was partially done
in the fall 2007
while the authors were  visiting the Mittag Leffler
Institute, Sweden,   which provided financial support.
They would like to thank the center for
 excellent working conditions  and
 a very friendly  atmosphere.

\end{document}